\newcommand{\ee}{\mathbb{E}}
\newcommand{\eqref}[1]{(\ref{#1})}
\newcommand{\ep}{\varepsilon}
\newtheorem{lmm}[thm]{Lemma}
\newcommand{\mf}{\mathcal{F}}
\newcommand{\cp}{\mathcal{P}}
\newcommand{\pp}{\mathbb{P}}
\newcommand{\ra}{\rightarrow}
\newcommand{\rr}{\mathbb{R}}
\newcommand{\tr}{\operatorname{Tr}}
\newcommand{\var}{\operatorname{Var}}
\newcommand{\hp}{\hat{p}}
\newcommand{\rank}{\operatorname{rank}}
\begin{document}
\begin{frontmatter}

\title{Matrix estimation by Universal Singular Value~Thresholding}
\runtitle{Matrix estimation by USVT}

\begin{aug}
\author[A]{\fnms{Sourav}~\snm{Chatterjee}\corref{}\ead[label=e1]{souravc@stanford.edu}\thanksref{T1}}
\runauthor{S. Chatterjee}
\affiliation{Stanford University}
\address[A]{Department of Statistics\\
Stanford University\\
Stanford, California 94305\\
USA\\
\printead{e1}}
\end{aug}
\thankstext{T1}{Supported in part by NSF Grant DMS-10-05312.}

\received{\smonth{9} \syear{2013}}
\revised{\smonth{9} \syear{2014}}

\begin{abstract}
Consider the problem of estimating the entries of a large matrix, when
the observed entries are noisy versions of a small random fraction of
the original entries. This problem has received widespread attention in
recent times, especially after the pioneering works of Emmanuel Cand\`es and collaborators. This paper introduces a simple estimation
procedure, called Universal Singular Value Thresholding (USVT), that
works for any matrix that has ``a little bit of structure.''
Surprisingly, this simple estimator achieves the minimax error rate up
to a constant factor. The method is applied to solve problems related
to low rank matrix estimation, blockmodels, distance matrix completion,
latent space models, positive definite matrix completion, graphon
estimation and generalized Bradley--Terry models for pairwise comparison.
\end{abstract}

\begin{keyword}[class=AMS]
\kwd[Primary ]{62F12}
\kwd{62G05}
\kwd[; secondary ]{05C99}
\kwd{60B20}
\end{keyword}
\begin{keyword}
\kwd{Matrix completion}
\kwd{matrix estimation}
\kwd{sochastic blockmodel}
\kwd{latent space model}
\kwd{distance matrix}
\kwd{covariance matrix}
\kwd{singular value
decomposition}
\kwd{low rank matrices}
\kwd{graphons}
\end{keyword}
\end{frontmatter}

\section{Introduction}\label{intro}

Consider a statistical estimation problem where the unknown parameter
is not a single value or vector, but an $m\times n$ matrix $M$. Given
an estimator $\hat{M}$, one choice for a measure of the error in
estimation is the mean-squared error, defined as
%
\begin{equation}
\label{msedef}
\operatorname{MSE}(\hat{M}) := \ee \Biggl[\frac{1}{mn}\sum
_{i=1}^m \sum_{j=1}^n
(\hat{m}_{ij} - m_{ij})^2 \Biggr].
\end{equation}
Here, $\hat{m}_{ij}$ and $m_{ij}$ denote the $(i,j)$th elements of $\hat
{M}$ and $M$, respectively. If we have a sequence of such problems, and
$M_n$ and $\hat{M}_n$ denote the parameter and the estimator in the
$n$th problem, then by usual statistical terminology we may say that
the sequence of estimators $\hat{M}_n$ is consistent if
\[
\lim_{n\ra\infty} \operatorname{MSE}(\hat{M}_n) = 0.
\]
The problem of estimating the entries of a large matrix from incomplete
and/or noisy entries has received widespread attention ever since the
proliferation of large data sets. Early work using spectral analysis
was done by a number of authors in the engineering literature, for
example, by Azar et~al.~\cite{azaretal01} and Achlioptas and \mbox{McSherry}
\cite{achlioptas01}. This was followed by a sizable body of work on
spectral methods, the main pointers to which may be found in the
important recent papers of Keshavan, Montanari and Oh~\cite{kmo10a,kmo10b}. Nonspectral methods also appeared, for example, in~\cite{renniesrebro05}.

In a different direction, statisticians have worked on matrix
completion problems under a variety of modeling assumptions. Possibly
the earliest works are due to Fazel \cite{fazel02} and Rudelson and
Vershynin \cite{rv07}. The emergence of compressed sensing \cite{donoho06,candesrombergtao06} has led to an explosion in activity in
the field of matrix estimation and completion, beginning with the work
of Cand\`es and Recht \cite{candesrecht09}. The pioneering works of
Emmanuel Cand\`es and his collaborators \cite{candesrecht09,candestao10,candesplan10,ccs} introduced the technique of matrix
completion by minimizing the nuclear norm under convex constraints,
which is a convex optimization problem tractable by standard
algorithms. This method has the advantage of \textit{exactly}, rather than
approximately, recovering the entries of the matrix when a suitable low
rank assumption is satisfied, together with a certain other assumption
called ``incoherence.''

Since the publication of \cite{candesrecht09}, a number of statistics
papers have attacked the matrix completion problem from various angles.
Some notable examples are \cite{negahban,mht,klt,rohdetsybakov11,kol2012,davenport}. In a different direction, a paper that seems to
have a close bearing on the analytical aspects of this paper is a
manuscript of Oliveira \cite{oliveira09}.


In addition to the theoretical advances, a large number of algorithms
for matrix completion and estimation have emerged. The main ones are
nicely summarized and compared in \cite{mht}.


The purpose of this paper is to introduce a new estimator that is
capable of solving a variety of matrix estimation problems that are not
tractable by existing tools (at least in a mathematically provable
sense). The estimator and its properties are described in this
introductory section. Section~\ref{examples} focuses on applications,
which include applications to low rank matrices, stochastic
blockmodels, distance matrices, latent space models, positive definite
matrices, graphons and generalized Bradley--Terry models. All proofs
are in Section~\ref{proofs}. An expanded version (version 5) of the
paper containing more theorems, examples and simulation results is
available on arXiv at the URL:
\url{http://arxiv.org/pdf/1212.1247v5.pdf}.

For interesting new developments that appeared after the first draft of
this paper was posted on arXiv, see \cite{choi,dg,nadakuditi}.
Further references and citations are given in subsequent sections.


\subsection{The setup}
Suppose that we have a $m\times n$ matrix $M$, where $m\le n$ and the
entries of $M$ are bounded by $1$ in absolute value. Let $X$ be a
matrix whose elements are independent random variables, and $\ee
(x_{ij})=m_{ij}$ for all $i$ and $j$ [where, as usual, $x_{ij}$ and
$m_{ij}$ denote the $(i,j)$th entries of $X$ and $M$, resp.].
Assume that the entries of $X$ are also bounded by $1$ in absolute
value, with probability one. A matrix such as $X$ will henceforth be
called a ``data matrix with mean $M$.'' The matrix $M$ will sometimes be
called the ``parameter matrix.'' Let $p$ be a real number belonging to
the interval $[0,1]$. Suppose that each entry of $X$ is observed with
probability $p$, and unobserved with probability $1-p$, independently
of the other entries.

The above model will henceforth be referred to as the ``asymmetric
model.'' The ``symmetric model'' is defined in a similar manner: Take any
$n$ and let $M$ be a symmetric matrix of order $n$, whose entries are
bounded by $1$ in absolute value. Let $X$ be a symmetric random matrix
of order $n$ whose elements on and above the diagonal are independent,
and $\ee(x_{ij})=m_{ij}$ for all $1\le i\le j\le n$. As before, assume
that the entries of $X$ are almost surely bounded by $1$ in absolute
value. Take any $p\in[0,1]$ and suppose that each entry of $X$ on and
above the diagonal is observed with probability $p$, and unobserved
with probability $1-p$, independently of the other entries.



Similarly, one can define the ``skew-symmetric model,'' where the
difference $X-M$ is skew-symmetric, with independence on and above the
diagonal as in the symmetric model. This model is used for analyzing
the nonparametric Bradley--Terry model in Section~\ref{bradley}.

\subsection{The USVT estimator}
In the above models, we construct an
estimator $\hat{M}$ of $M$ based on the observed entries of $X$ along
the following steps. Tentatively, I~call this the Universal Singular
Value Thresholding (USVT) algorithm.
\begin{longlist}[5.]
\item[1.] For each $i, j$, let $y_{ij} = x_{ij}$ if $x_{ij}$ is observed,
and let $y_{ij}=0$ if $x_{ij}$ is unobserved. Let $Y$ be the matrix
whose $(i,j)$th entry is $y_{ij}$.
\item[2.] Let $Y=\sum_{i=1}^m s_i u_i v_i^T$ be the singular value
decomposition of $Y$. (In the symmetric and skew-symmetric models, $m=n$.)
\item[3.] Let $\hp$ be the proportion of observed values of $X$. In the
symmetric and skew-symmetric models, let $\hp$ be the proportion of
observed values on and above the diagonal.
\item[4.] Choose a small positive number $\eta\in(0,1)$ and let $S$ be the
set of ``thresholded singular values,'' defined as
\[
S := \bigl\{i \dvtx  s_i \ge(2+\eta) \sqrt{n\hp} \bigr\}.
\]
[Note: (a) In simulations, the method described below seemed to work
even if $\eta$ was taken to be exactly equal to zero; but the
mathematical proof that I have requires $\eta$ to be positive. In
practice, one may choose $\eta$ {a priori} to be some arbitrary
small positive number, say, $0.01$; but a data-dependent choice is not
allowed. (b) If it is known that $\var(x_{ij})\le\sigma^2$ for all
$i,j$, where $\sigma$ is a known constant${}\le1$, then the threshold
$(2+\eta)\sqrt{n\hat{p}}$ may be improved to $(2+\eta)\sqrt{n\hat{q}}$,
where $\hat{q} := \hat{p} \sigma^2 + \hat{p}(1-\hat{p})(1-\sigma^2)$.]%
%
\item[5.] Define
\[
W := \frac{1}{\hp} \sum_{i\in S} s_i
u_i v_i^T.
\]
\item[6.] Let $w_{ij}$ denote the $(i,j)$th element of $W$. Define
\[
\hat{m}_{ij} :=
\cases{ w_{ij}, & \quad$\mbox{if }
-1\le w_{ij} \le 1$,
\cr
1, &\quad$\mbox{if } w_{ij} > 1$,
\cr
-1, &\quad$\mbox{if } w_{ij} < -1$.}
\]
\item[7.] Let $\hat{M}$ be the matrix whose $(i,j)$th entry is $\hat{m}_{ij}$.
\item[8.] If the entries of $M$ and $X$ are known to belong to an interval
$[a,b]$ instead of $[-1,1]$, then subtract $(a+b)/2$ from each entry of
$X$ and divide by $(b-a)/2$, so that the entries are forced to lie in
$[-1,1]$, then apply the above procedure, and finally multiply the
end-result by $(b-a)/2$ and add $(a+b)/2$ to get the estimate of $M$.
\item[9.] If $m > n$, then one should work with $M^T$ and $X^T$ instead of
$M$ and $X$, so that the number of rows is forced to be $\le$ the
number of columns. 
\end{longlist}
%

\subsection{Main result}
Recall that the \textit{nuclear norm} of $M$, written $\|M\|_*$, is
defined as the sum of the singular values of $M$. Recall also the
definition \eqref{msedef} of the mean squared error of a matrix
estimator. The following theorem gives an error bound for the estimator
$\hat{M}$ in terms of the nuclear norm of $M$. This is the main result
of this paper. 

\begin{thm}\label{mainest}
Let $\hat{M}$ and $M$ be as above. Let $\operatorname{MSE}(\hat{M})$ be
defined as in~\eqref{msedef}. Suppose that $p \ge n^{-1+\ep}$ for some
$\ep> 0$. Then
\[
\operatorname{MSE}(\hat{M}) \le C\min \biggl\{\frac{\|M\|_*}{m\sqrt{np}}
, \frac{\|M\|_*^2}{mn} , 1 \biggr\} + C(\ep)e^{-cnp},
\]
where $C$ and $c$ are positive constants that depend only on the choice
of $\eta$ and $C(\ep)$ depends only on~$\ep$ and $\eta$. The same
result holds for the symmetric and skew-symmetric models, after putting $m=n$.

Moreover, if in the same setting as above, we know that $\var
(x_{ij})\le\sigma^2$ for all $i,j$ for some known $\sigma^2\le1$, and
the threshold is set at $(2+\eta)\sqrt{n\hat{q}}$ (see step
$4$ of the algorithm), the same result holds under the condition that
$q \ge n^{-1+\ep}$, where $q:= p\sigma^2 + p(1-p)(1-\sigma^2)$. In this
case the exponential term in the error changes to $C(\ep)e^{-cnq}$ and
the term $\|M\|_*/(m\sqrt{np})$ improves to $\|M\|_* \sqrt{q}/(m\sqrt
{n} p)$. 
\end{thm}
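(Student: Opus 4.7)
Decompose $Y = T + N$ with $T := pM$ and $N := Y - T$; in all three models $N$ has mean-zero, independent entries (on and above the diagonal in the symmetric/skew-symmetric cases), each bounded by $2$ and with variance at most $p$ (and at most $q$ in the refined version, since $\var(y_{ij}) = p(1-p)m_{ij}^2 + p\,\var(x_{ij}) \le q$).  Write $\hat Y := \sum_{i\in S} s_i u_i v_i^T$, so $W = \hat Y/\hp$; the clipping in step~6 only decreases the Frobenius error since $m_{ij}\in[-1,1]$.  The plan rests on three ingredients: (i) a Chernoff bound giving $|\hp - p| \le \eta p/100$ off an event of probability $\le C(\ep)e^{-cnp}$, under $p \ge n^{-1+\ep}$; (ii) a non-asymptotic Bai--Yin type operator-norm bound (Seginer, Lata\l{}a, or Bandeira--Van Handel) combined with Talagrand concentration of the spectral norm, yielding $\|N\|_{\mathrm{op}} \le (2+\eta/8)\sqrt{np}$ off an event of probability $\le C(\ep)e^{-cnp}$, with analogous statements in the symmetric and skew-symmetric cases; and (iii) Weyl's inequality $|s_i - \lambda_i| \le \|N\|_{\mathrm{op}}$, where $s_i$ and $\lambda_i$ denote the singular values of $Y$ and $T$ respectively.

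\medskip

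On the good event, $\tau := (2+\eta)\sqrt{n\hp}$ is within a small $\eta$-dependent factor of $(2+\eta)\sqrt{np}$, so $i\in S$ forces $\lambda_i \ge \tau - \|N\|_{\mathrm{op}} \ge (\eta/2)\sqrt{np}$.  Let $r := |\{i : \lambda_i \ge (\eta/2)\sqrt{np}\}|$, so that $|S| \le r \le 2\|T\|_*/(\eta\sqrt{np})$ and $\lambda_{r+1} < (\eta/2)\sqrt{np}$.  Let $T_r$ be the best rank-$r$ approximation to $T$.  Then $\rank(\hat Y - T_r) \le |S| + r \le 2r$ and
\[
\|\hat Y - T_r\|_{\mathrm{op}} \le \|\hat Y - Y\|_{\mathrm{op}} + \|N\|_{\mathrm{op}} + \|T - T_r\|_{\mathrm{op}} \le \tau + (2+\eta/8)\sqrt{np} + \lambda_{r+1} \le C_\eta\sqrt{np},
\]
using that the discarded singular values of $Y$ lie below $\tau$.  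Hence $\|\hat Y - T_r\|_F^2 \le 2r\,C_\eta^2\,np \le C'_\eta\,\|T\|_*\sqrt{np}$, and since $\|T - T_r\|_F^2 \le \lambda_{r+1}\|T\|_* \le (\eta/2)\sqrt{np}\,\|T\|_*$, the triangle inequality gives $\|\hat Y - T\|_F^2 \le C''_\eta\,\|T\|_*\sqrt{np}$.  Dividing by $\hp^2 \asymp p^2$ and by $mn$ yields $\mathrm{MSE}(\hat M) \le C\|M\|_*/(m\sqrt{np})$.

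\medskip

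For the bound $C\|M\|_*^2/(mn)$, split cases.  If $\|M\|_* < (\eta/4)\sqrt{n/p}$ then $\lambda_1 \le p\|M\|_* < (\eta/4)\sqrt{np}$, so $s_1 \le \lambda_1 + \|N\|_{\mathrm{op}} < \tau$ forces $S = \emptyset$ and $W = 0$; then $\mathrm{MSE}(\hat M) \le \|M\|_F^2/(mn) \le \|M\|_*^2/(mn)$ using $\|M\|_F \le \|M\|_*$.  Otherwise $\|M\|_* \ge (\eta/4)\sqrt{n/p}$ implies $\|M\|_*/(m\sqrt{np}) \le (4/\eta)\|M\|_*^2/(mn)$, so the first bound already controls the second up to an $\eta$-dependent constant.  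The trivial estimate $\mathrm{MSE}(\hat M) \le 4$ from clipping supplies the ``$1$'' in the minimum and also absorbs the bad-event probability into the additive term $C(\ep)e^{-cnp}$.  The $\sigma^2$-refined statement runs the same argument with $p$ replaced by $q$ in the spectral-norm bound and the threshold, giving the improved rate $C\|M\|_*\sqrt q/(m\sqrt n\,p)$.

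\medskip

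The main obstacle is the sharp operator-norm concentration: one needs the leading constant $2$ (up to an additive $\eta/8$) and exponential tails, uniformly over the three models, for random matrices with bounded independent entries of heterogeneous variance.  This forces a careful invocation of modern non-asymptotic random-matrix inequalities in the rectangular, symmetric, and skew-symmetric geometries; all other steps are robust perturbation arguments built on top of this spectral estimate.
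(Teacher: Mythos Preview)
Your plan follows essentially the same route as the paper: the core perturbation argument (bound $|S|$ via Weyl, pass from operator norm to Frobenius via a rank bound, control the tail of $T$'s singular values) reproduces the paper's key Lemma~\ref{estlmm} in slightly different packaging, the spectral-norm input is the paper's Theorem~\ref{normthm}, and the case split on the size of $\|M\|_*$ to obtain the $\|M\|_*^2/(mn)$ term is identical.

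There is one genuine gap.  After establishing $\|\hat Y - T\|_F^2 \le C_\eta\,\|T\|_*\sqrt{np}$ you write ``dividing by $\hat p^2\asymp p^2$ and by $mn$ yields $\mathrm{MSE}(\hat M)\le C\|M\|_*/(m\sqrt{np})$.''  But $W-M = (\hat Y-T)/\hat p + M(p-\hat p)/\hat p$, so the passage generates a cross term $(\hat p-p)^2\|M\|_F^2/\hat p^2$, and with your coarse concentration $|\hat p-p|\le\eta p/100$ this contributes $\asymp\eta^2\|M\|_F^2/(mn)$ to the MSE---potentially of order $\eta^2$, which is \emph{not} dominated by $\|M\|_*/(m\sqrt{np})$ when $\|M\|_*$ is moderate.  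The paper handles this by writing $\hat p W - pM = \hat Y - T$ exactly, then bounding $p^2\|W-M\|_F^2 \le C\|\hat p W - pM\|_F^2 + C(\hat p-p)^2\|M\|_F^2$ and taking expectation of the second term: $\mathbb E(\hat p-p)^2 = p(1-p)/(mn)$, which after dividing through produces an extra $O(1/(np))$ that is then absorbed into the main estimate by a short case analysis.  Either adopt this expectation argument, or sharpen your good-event concentration for $\hat p$ to the level $|\hat p - p|\lesssim p/\sqrt m$ (which Bernstein still gives with the required tail $e^{-cnp}$) and check that the resulting cross term is dominated in the regime $\|M\|_*\ge(\eta/4)\sqrt{n/p}$.
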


%
Incidentally, the proof shows that the condition $p > n^{-1+\ep}$ may
be improved to $p > n^{-1}(\log n)^{6+\ep}$ (see Theorem~\ref
{normthm}), but I prefer to retain the present version for aesthetic
reasons, especially considering that it is not a real improvement from
any practical point of view.

It should be emphasized that although singular value thresholding has
been used in a number of papers on matrix completion and estimation
(see, e.g., \mbox{\cite{azaretal01,achlioptas01,ccs,kmo10a,kmo10b}} and
references therein), the above algorithm has the unique feature that
the threshold is universal. In the literature, it is usually assumed
that the matrix $M$ has a rank $r$ that is known, and uses the value of
$r$ while thresholding. The USVT algorithm manages to cut off the
singular values at the ``correct'' level, depending on the structure of
the unknown parameter matrix. The adaptiveness of the USVT threshold is
somewhat similar in spirit to that of the \textit{SureShrink} algorithm of
Donoho and Johnstone~\cite{dj95}. SureShrink performs function
estimation by estimating Fourier coefficients in some suitable basis,
and then thresholds the coefficients at a threshold that automatically
adapts to the smoothness of the unknown function. Analogously, the USVT
algorithm computes the eigenvalues of the observed matrix, and then
thresholds the eigenvalues at a universal threshold that is
automatically adaptive in nature, because it picks out as much
``structure'' as is available and throws out all the randomness. This
point will become more clear from the examples discussed in Section~\ref{examples}.


One limitation of USVT is the requirement that the entries should lie
in a bounded interval. One may relax this requirement by assuming, for
example, that the errors $x_{ij}-m_{ij}$ are distributed as normal
random variables with mean zero and variance $\sigma^2$. If $\sigma^2$
is known, then I believe that one can modify the USVT algorithm by
thresholding at $(2+\eta)\sigma\sqrt{n}$ and obtain the same theorems.
The rationale behind this belief is as follows: if $A$ is a large
symmetric random matrix whose entries on and above the diagonal are
independent, have zero mean, and are bounded by $1$ in absolute value,
then the spectral norm of $A$ is less than $2+\eta$ with high
probability. This is the key ingredient in the proof of Theorem~\ref{mainest}.
But such a result continues to be true, after replacing
$2+\eta$ with $(2+\eta)\sigma$, if the entries are normally distributed
with mean zero and variance bounded by $\sigma^2$. Therefore, it is
conceivable that the proof of Theorem~\ref{mainest} may be modified to
accommodate this altered situation. However, if $\sigma^2$ is unknown,
I do not know how to proceed. In reality, $\sigma^2$ will not be known;
this is why I have not worked with the normality assumption. Also, the
situation of normally distributed entries but with a large proportion
missing, seems to be trickier.



\subsection{Minimax lower bound}

It is not difficult to prove that for an $m\times n$ matrix~$M$ with
entries bounded by $1$ in absolute value, where $m\le n$, the nuclear
norm is bounded by $m\sqrt{n}$. Given a number $\delta\in[0,m\sqrt
{n}]$, one may take an arbitrary estimator $\tilde{M}$ and try to find
the $M$ among all $M$ satisfying $\|M\|_*\le\delta$ for which
$\operatorname{MSE}(\tilde{M})$ is maximum. Recall that an estimator that minimizes
this maximum error is classically known as a minimax estimator. The
following theorem shows that our estimator $\hat{M}$ is minimax up to a
constant multiplicative factor and an exponentially small additive discrepancy.

\begin{thm}\label{minimaxthm}
Consider the general matrix estimation problem outlined in the
beginning of this section. Given any estimator $\tilde{M}$ and any
$\delta\in[0, m\sqrt{n}]$, there exists $M$ satisfying $\|M\|_*\le
\delta$ and a\vspace*{1pt} data matrix $X$ with mean $M$, such that for this $M$ and
$X$, the estimator $\tilde{M}$ satisfies
\[
\operatorname{MSE}(\tilde{M}) \ge c \min \biggl\{\frac{\delta}{m\sqrt{np}}, \frac{\delta^2}{mn},
1 \biggr\},
\]
where $c$ is a positive universal constant. Moreover, if $p< 1/2$ then
$X$ and $M$ may be chosen such that $X=M$. The same lower bound holds
in the symmetric case and in the skew-symmetric case. 
\end{thm}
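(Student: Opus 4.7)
The plan is to establish each of the three terms in the $\min$ as a separate lower bound via Assouad's lemma, one case at a time. Compare the three terms: $\delta^2/(mn)$ dominates when $\delta \le \sqrt{n/p}$, $\delta/(m\sqrt{np})$ dominates when $\sqrt{n/p} \le \delta \le m\sqrt{np}$, and the constant $1$ acts as a cap, used only in the trivial regime where $\delta \ge m\sqrt{n}$: there one takes $M$ with $\pm 1$ entries on a full submatrix and $X = M$, and observes that any estimator incurs per-entry MSE $\ge c(1-p)$ on the $(1-p)$ fraction of unobserved entries, which is $\ge c$ for $p$ bounded away from $1$.

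For the regime $\delta \le \sqrt{n/p}$, I would apply Assouad's lemma to the family $M_\omega$ indexed by $\omega \in \{\pm 1\}^n$, where $M_\omega$ has its first $k$ rows all equal to $\theta\omega^T$ and zeros elsewhere, with $k$ and $\theta$ chosen so that $\theta\sqrt{kn} = \delta$ (nuclear norm constraint) and $kp\theta^2 = O(1)$ (bounded per-bit KL). Taking $X_{ij}\in\{\pm 1\}$ with $\mathbb{E}X_{ij}=m_{ij}$, a single bit flip $\omega_j \to -\omega_j$ changes $k$ entries of $M$ by $2\theta$, contributing $4k\theta^2$ to the squared Frobenius distance and about $2kp\theta^2$ to the KL divergence between observation distributions; Assouad's lemma with Pinsker then yields per-entry MSE $\ge c\delta^2/(mn)$. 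For $\sqrt{n/p}\le\delta\le m\sqrt{np}$, I would use a richer Assouad family indexed by $\omega\in\{\pm 1\}^{kn}$: take $M_\omega$ with entries $\theta\omega_{ij}$ on its first $k$ rows and zeros elsewhere, choosing $\theta\sim 1/\sqrt{p}$ (so the per-bit KL is $\sim p\theta^2 = O(1)$) and $k\sim\delta\sqrt{p/n}$ (so that $\theta\sqrt{kn}=\delta$, using that a random $k\times n$ sign matrix has all singular values of order $\sqrt{n}$). A short computation then gives per-entry MSE $\ge ck\theta^2/m = c\delta/(m\sqrt{np})$.

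The noiseless statement $X=M$ when $p<1/2$ is obtained by re-running the same arguments with $\theta=1$ (so observations are deterministic): the per-bit total variation appearing in Assouad's lemma is then replaced by $1-(1-p)^\ell$, where $\ell$ is the number of entries affected by the bit flip, and for $p<1/2$ with the parameters tuned so that $\ell p = O(1)$, this stays bounded away from $1$, yielding the same lower bounds. The main obstacle is the middle regime, where the right rate $1/\sqrt{np}$ (rather than the naive $1/p$) forces one to cook up a family with roughly $\delta\sqrt{np}$ independent bits while simultaneously enforcing $\|M_\omega\|_*\le\delta$, $\|M_\omega\|_\infty\le 1$, and bounded per-bit KL. The delicate balance $(k,\theta)\sim(\delta\sqrt{p/n},1/\sqrt{p})$ falls out of these three constraints; once the singular-value estimates for the rank-$k$ random sign pattern are in hand (standard Marchenko--Pastur-type bounds), Assouad's inequality via Pinsker closes the argument in a few lines.
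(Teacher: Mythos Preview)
Your overall strategy---splitting into the three regimes and invoking a hypercube-type minimax argument---is the same as the paper's. The first regime (rank-one family, $\delta^2/(mn)$ rate) is fine. However, your middle-regime construction has a genuine gap.

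You take $M_\omega$ with independent $\pm\theta$ entries on a $k\times n$ block and set $\theta\sim 1/\sqrt{p}$ so that the per-bit KL $\sim p\theta^2$ is $O(1)$. But the model requires $|m_{ij}|\le 1$, so $\theta\le 1$ is forced, and for small $p$ your choice $\theta\sim 1/\sqrt{p}$ is illegal. If you retreat to $\theta=1$, the nuclear-norm constraint $\|M_\omega\|_*\le\sqrt{k}\,\|M_\omega\|_F = k\sqrt{n}\le\delta$ forces $k\le\delta/\sqrt{n}$, and Assouad gives only $\mathrm{MSE}\ge ck/m\le c\,\delta/(m\sqrt{n})$---off by the crucial factor $1/\sqrt{p}$. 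Your noiseless paragraph does not repair this: with $\theta=1$ and $\ell=1$ entry per bit, the total variation $1-(1-p)$ is indeed bounded away from $1$, but the resulting rate is again $\delta/(m\sqrt{n})$, not $\delta/(m\sqrt{np})$.

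The paper's fix is a \emph{row-copying} trick: take $k\sim \delta\sqrt{p/n}$ rows of i.i.d.\ entries in $[-1,1]$ and copy this $k\times n$ block $[1/p]$ times vertically. The matrix still has rank $\le k$, so $\|M\|_*\le\sqrt{k}\,\|M\|_F\le\sqrt{k}\sqrt{k[1/p]n}=k\sqrt{n/p}\le\delta$. Now each independent parameter has $[1/p]$ copies in $M$; for $p<1/2$ the chance that \emph{none} are observed is $(1-p)^{[1/p]}\ge c>0$, so the conditional variance of each nonzero $m_{ij}$ given the data is bounded below by a constant. Summing over the $k[1/p]n\sim kn/p$ nonzero entries gives $\mathrm{MSE}\ge c\,k/(mp)\sim c\,\delta/(m\sqrt{np})$. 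In Assouad language, this is exactly your framework but with $\ell=[1/p]$ entries per bit (so $\ell p=O(1)$ is saturated, not slack) and $\theta=1$ throughout---the redundancy, not the amplitude, carries the $1/\sqrt{p}$. Your invocation of Marchenko--Pastur is unnecessary: the deterministic bound $\|M\|_*\le\sqrt{\mathrm{rank}}\,\|M\|_F$ suffices for every $\omega$.

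Two smaller points: the constant-$1$ regime begins at $\delta\ge m\sqrt{np}$, not $\delta\ge m\sqrt{n}$, so you need a construction there (the paper again uses copied rows). And for $p\ge 1/2$ the ``none observed'' argument fails; the paper patches this by halving the amplitudes and adding uniform noise of width $1$ to $X$, so that even an observed entry leaves constant conditional variance.
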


It is worth noting that the exponentially small discrepancy is
necessary. For example, if $\delta=0$, then the minimax error is
obviously zero. However, there is still an exponentially small chance
that $\hat{M}$ may be nonzero. It is also worth noting that if $\delta
$ is not too small (e.g., if $\delta> \sqrt{m/p}$), then the
exponential discrepancy does not matter, and the combination of
Theorems~\ref{mainest} and~\ref{minimaxthm} gives the correct minimax
error up to a universal multiplicative constant.

An examination of the proof of Theorem~\ref{mainest} indicates that
with slight modifications, one may obtain bounds on tail probabilities
instead of an upper bound on the mean squared error. I have retained
the present version for aesthetic reasons.

Incidentally, two notable recent papers, namely, Koltchinskii et
al.~\cite{klt} and Davenport et al.~\cite{davenport}, have suggested
matrix estimation by nuclear norm penalization and proved minimax
optimality results that match up to logarithmic factors. Davenport et
al.~\cite{davenport}, Theorem~3,  show (in the notation of our Theorem~\ref{minimaxthm}) that if the entries of $X$ belong to $\{-1,1\}$ and
if $\delta\ge4\sqrt{mn}$, then the minimax error is bounded below by
a universal constant times $\min\{\delta/(m\sqrt{np}), 1\}$,
provided that this quantity is bigger than $\delta^2/(m^2n)$. This is
almost the same as the conclusion of Theorem~\ref{minimaxthm}, except
that it does not cover the case of $\delta$ smaller than $4\sqrt{mn}$.
Section~3.1 of~\cite{davenport} gives a matrix estimation algorithm
based on nuclear norm penalization that achieves this minimax rate up
to a logarithmic factor. However, the implementation of this algorithm
requires that the user has a reasonable estimate for the nuclear norm
of the unknown matrix $M$, since that is used as the regularization
parameter. USVT has no such requirement. Another advantage that USVT
has over the algorithm of~\cite{davenport} is that it may be easier to
implement, especially for very large matrices, because it does not
involve convex optimization.

The estimator of Koltchinskii et al.~\cite{klt} is also based on
nuclear norm\vspace*{1pt} penalization: translating to our notation, they estimate
$M$ by minimizing $\|X-\hat{M}\|_F^2 + \lambda\|\hat{M}\|_*$ over all
$\hat{M}$, where $\|\cdot\|_F$ is Frobenius norm, $\|\cdot\|_*$ is
nuclear norm, and $\lambda$ is a regularization parameter. It is shown
in \cite{klt} that this problem is actually equivalent to soft singular
value thresholding, where the threshold depends on the parameter
$\lambda$. A conservative choice of $\lambda$ (albeit with an
unspecified constant) and a minimax lower bound that matches the upper
bound up to a logarithmic factor are given in \cite{klt}. The minimax
bound is computed over the set of all matrices with rank less than a
given number and, therefore, is not directly comparable to the minimax
bound in Theorem~\ref{minimaxthm}. With a suitable choice of $\lambda$---but again with unspecified constants---the upper bound in
\cite{klt}, Theorem~3,  becomes (up to a logarithmic factor) essentially equal
to $\|M\|_*/(m\sqrt{np})$. Note that this is the same as the main term
in Theorem~\ref{mainest}. However, if we additionally know that $M$ has
low rank, then the upper bound in \cite{klt}, Theorem~3,  becomes
substantially better (see Section~\ref{lowrank}).


\subsection{Practical issues and warnings}

I do not consider the USVT algorithm as presented above to be in a form
that may implemented ``as is.'' This is mainly for the following reasons:
\begin{longlist}[(a)]
\item[(a)] USVT is minimax optimal only up to a constant factor. In fact, it
is very likely that one may be able to build a better estimator by
taking into account the ratio $m/n$, and getting improved bounds when
this ratio is small. Although Theorem~\ref{minimaxthm} shows that the
improvement will be limited to multiplication by a constant factor,
such an improvement may be important for practical purposes. The recent
paper \cite{dg} has explored the issue of attaining the minimax error
all the way up to the correct constant.

\item[(b)] The number $\eta$ is a ``tuning parameter'' for this algorithm,
that may be chosen by the implementer. The theorem is valid with any
choice of $\eta$ in the interval $(0,1)$, although the constants in the
error bounds blow up as $\eta$ tends to zero. I have noticed in
simulations that taking $\eta=0$ works quite well, but I do not know
how to prove that. Choosing $\eta$ to be a small but fixed positive
number such as $0.01$ is consistent with the requirements of Theorem~\ref{mainest} and seemed to give good results in simulations. Choosing
$\eta$ in a data-dependent manner is, however, not covered by Theorem~\ref{mainest}. 

\item[(c)] Note that in practice, any data matrix may be centered and scaled
so that the entries are forced to lie in the interval $[-1,1]$.
However, if the centering and scaling are done in a data-dependent
manner, then the assertion of Theorem~\ref{mainest} is no longer
guaranteed to be true. 
\end{longlist}

\subsection{An impossibility theorem for error estimates}\label{imposssec}

Theorem~\ref{mainest} gives an upper bound on the mean squared error of
$\hat{M}$. The estimate involves the nuclear norm of parameter matrix
$M$. A natural question is: Is it possible to estimate the true MSE of
$\hat{M}$ from the data?

A straightforward approach is to use parametric bootstrap. Having
estimated $M$ using $\hat{M}$, one may choose a large number $K$,
generate $K$ copies of the data using $\hat{M}$ as the parameter
matrix, compute the estimates $\hat{M}^{(i)}$, $i=1,\ldots, K$ for the
$K$ simulations, and estimate the MSE of $\hat{M}$ using the bootstrap estimator
\[
\widehat{\operatorname{MSE}}_{\mathrm{BS}}(\hat{M}) = \frac{1}{K}\sum
_{i=1}^K \frac{\|\hat{M}^{(i)} - \hat{M}\|_F^2}{mn}.
\]
For the validity of the bootstrap estimate of the MSE, it is essential
that the original $\hat{M}$ is an accurate estimate of $M$. In other
words, we need to know a priori that $\operatorname{MSE}(\hat{M})$ is small to be able
to claim that the bootstrap estimator of $\operatorname{MSE}(\hat{M})$ is accurate.
Theorem~\ref{mainest} implies that if we know that $\|M\|_*$ is small
enough from assumptions, this is true. 

But is it possible to somehow determine whether $\operatorname{MSE}(\hat{M})$ is small
or not from the data, if we do not make any assumption about $M$ to
start with? We will now show that it is impossible to do so, not only
for the estimator $\hat{M}$ but for any ``nontrivial'' estimator $\tilde{M}$.

The definition of a nontrivial estimator is as follows. Given a
parameter matrix $M$ and a data matrix $X$ satisfying the conditions of
Section~\ref{intro}, the trivial estimator of $M$ based on $X$ is
simply $X$ itself. We will denote the trivial estimator as $\hat
{M}^{\mathrm{Trv}}$. Now suppose we are given some estimator $\tilde{M}$.
We will say that the estimator $\tilde{M}$ is nontrivial if there
exists a sequence of parameter matrices $M_n$ and data matrices $X_n$
such that
\[
\operatorname{MSE}\bigl(\hat{M}_n^{\mathrm{Trv}}\bigr) \not\to 0 \qquad  \mbox{as } n\ra \infty,
\]
but $\lim_{n\ra\infty} \operatorname{MSE}(\tilde{M}_n)=0$. In other words,
$\tilde{M}_n$ solves a nontrivial estimation problem. The USVT
estimator is clearly nontrivial, as demonstrated by the examples from
Section~\ref{examples}.

Suppose that we have a nontrivial estimator $\tilde{M}$ and a
procedure $\textup{P}$ that gives an estimate $\widehat{\mathrm
{MSE}}_{\mathrm{P}}(\tilde{M})$ of the MSE of $\tilde{M}$. The MSE
estimate is computed using only the data. The procedure will be called
``good'' if the following two conditions hold:
\begin{longlist}[(a)]
\item[(a)] Whenever $M_n$ is a sequence of parameter matrices and $X_n$
is a sequence of data matrices such that $\operatorname{MSE}(\tilde{M}_n)$ tends to
zero, the estimate $\widehat{\operatorname{MSE}}_{\mathrm{P}}(\tilde{M}_n)$
also tends to zero in probability.
\item[(b)] Whenever $M_n$ is a sequence of parameter matrices and $X_n$
is a sequence of data matrices such that $\operatorname{MSE}(\tilde{M}_n)$ does not
tend to zero,\break $\widehat{\operatorname{MSE}}_{\mathrm{P}}(\tilde{M}_n)$ also
does not tend to zero in probability.
\end{longlist}

In the above setting, the following theorem establishes the
impossibility of the existence of a good estimator for the MSE.

\begin{thm}\label{impossible}
There cannot exist a good procedure for estimating the mean squared
error of a nontrivial estimator.
\end{thm}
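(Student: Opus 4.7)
The plan is to argue by contradiction. Assume a good procedure $P$ exists, and let $(M_n,X_n)$ be a sequence witnessing the non-triviality of $\tilde{M}$, so that $\mathrm{MSE}(\hat{M}_n^{\text{Trv}}) \not\to 0$ while $\mathrm{MSE}(\tilde{M}_n) \to 0$. Passing to a subsequence we may assume $\mathrm{MSE}(\hat{M}_n^{\text{Trv}}) \ge c$ for some fixed $c > 0$. By condition (a), $\widehat{\mathrm{MSE}}_P(\tilde{M}_n) \to 0$ in probability under this sequence.

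The central construction is a ``self-referential'' family of problems. For each $n$ pick a realization $m'_n$ of $X_n$, declare $M'_n := m'_n$, and take the noise-free data matrix $X'_n \equiv M'_n$ (which is a legitimate data matrix with mean $M'_n$ since every $|m'_{ij}|\le 1$), together with an independent Bernoulli$(p)$ observation pattern $Z'_n$. The key observation is that if we re-randomize the choice of $m'_n$ to be distributed as $X_n$, the joint law of the observed data in this new problem is identical to the joint law of the observed data in the original problem $(M_n,X_n)$. Since $\widehat{\mathrm{MSE}}_P$ depends only on the observed data, it therefore also converges to zero in probability along the randomized version of the new sequence.

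Two quantitative facts then finish the proof. By the triangle inequality and Cauchy--Schwarz,
\[
\ee_{X_n,Z'_n}\bigl[\|\tilde{M}(X_n,Z'_n) - X_n\|_F^2/(mn)\bigr] \ge \bigl(\sqrt{\mathrm{MSE}(\hat{M}_n^{\text{Trv}})} - \sqrt{\mathrm{MSE}(\tilde{M}_n)}\bigr)^2 \to c,
\]
so by a reverse Markov inequality a positive fraction $\alpha > 0$ of realizations $m'_n$ of $X_n$ satisfy $\mathrm{MSE}(\tilde{M}_n;m'_n) \ge c/4$. Separately, the in-probability convergence from (a) combined with Markov's inequality yields sequences $\epsilon_n,\delta_n \to 0$ such that, with $X_n$-probability tending to $1$, the realization $m'_n$ satisfies $\pp_{Z'_n}(\widehat{\mathrm{MSE}}_P > \epsilon_n) \le \delta_n$. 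The intersection of these two events has probability at least $\alpha - o(1) > 0$ eventually, so we may pick $m'_n$ meeting both. Along the resulting deterministic sequence $(M'_n,X'_n)$, one has $\mathrm{MSE}(\tilde{M}_n) \ge c/4$, so MSE does not tend to zero, yet $\widehat{\mathrm{MSE}}_P \to 0$ in probability, contradicting condition (b).

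The main obstacle is coordinating the two quantitative steps in the selection: the fraction $\alpha$ must remain uniform along the subsequence, and the thresholds $\epsilon_n,\delta_n$ must vanish slowly enough that the ``good behavior'' event has $X_n$-probability tending to $1$. This is a quantitative Fubini plus de-randomization argument; once it is handled carefully, the rest is routine bookkeeping.
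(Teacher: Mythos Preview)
Your proof is correct and follows essentially the same approach as the paper: both treat a realization of $X_n$ as a new parameter matrix with itself as the noise-free data matrix, observe that the estimator and the MSE-estimator therefore see data with the same law as before, yet the true MSE is now bounded away from zero by comparison with $\mathrm{MSE}(\hat{M}_n^{\text{Trv}})$. The only difference is in the de-randomization step: the paper places everything on one probability space, passes to a subsequence so that $\widehat{\mathrm{MSE}}_P(\tilde{M}_n)\to 0$ almost surely, shows $\ee[\mathrm{MSE}(\tilde{M}'_n)]\not\to 0$ via the elementary bound $(a+b)^2\le 2a^2+2b^2$, and then picks one realization of the whole sequence, whereas you de-randomize $n$-by-$n$ using reverse Markov and a Fubini/Markov argument; both executions are valid.
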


\section{Applications}\label{examples}

Throughout this section, $m$, $n$, $M$, $X$, $p$ and $\hat{M}$ will be
as in Section~\ref{intro}. Just to remind the reader, $M$ is an
$m\times n$ matrix where $1\le m\le n$. The entries of $M$ are assumed
to be bounded by $1$ in absolute value. The matrix $X$ is a random
matrix whose entries are independent, and the $(i,j)$th element
$x_{ij}$ has expected value equal to $m_{ij}$, the $(i,j)$th entry of
$M$. Moreover, they satisfy $|x_{ij}|\le1$ with probability one. In
particular, $X$ may be exactly equal to $M$, with no randomness. Each
entry of $X$ is observed with probability $p$ and unobserved with
probability $1-p$, independently of other entries. Occasionally, we
will assume the symmetric model, where $m=n$, and the matrices $M$ and
$X$ are symmetric. In the special case of the Bradley--Terry model in
Section~\ref{bradley}, we will assume the skew-symmetric model, where
$X-M$ is skew-symmetric.

We will now work out various specific cases where Theorem~\ref{mainest}
gives useful results. 

\subsection{Low rank matrices}\label{lowrank}

Estimating low rank matrices has been the focus of the vast majority of
prior work \cite{azaretal01,achlioptas01,fazel02,renniesrebro05,rv07,candesrecht09,candesplan10,candestao10,ccs,negahban,kmo10a,kmo10b,klt,kol2012,mht}.
Theorem~\ref{mainest} works for low rank
matrices. The following theorem, which is a simple corollary of
Theorem~\ref{mainest}, shows that $\hat{M}$ is a good estimate whenever
the rank of $M$ is small compared to~$mp$ (after assuming, as in
Theorem~\ref{mainest}, that $p\ge n^{-1+\ep}$). 

\begin{thm}\label{lowrankthm}
Suppose that $M$ has rank $r$. Suppose that $p\ge n^{-1+\ep}$ for some
$\ep> 0$. Then
\[
\operatorname{MSE}(\hat{M})\le C\min \biggl\{\sqrt{\frac{r}{mp}} , 1 \biggr\} +
C(\ep)e^{-cnp},
\]
where $C$ and $c$ depend only on $\eta$ and $C(\ep)$ depends only on
$\ep$ and $\eta$. Moreover, the same result holds when $M$ and $X$ are
symmetric.
\end{thm}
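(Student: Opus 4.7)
The plan is to derive Theorem \ref{lowrankthm} as an immediate corollary of Theorem \ref{mainest} by bounding the nuclear norm of a bounded rank-$r$ matrix.

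First I would observe that since the entries of $M$ are bounded by $1$ in absolute value and $M$ is $m \times n$, its Frobenius norm satisfies $\|M\|_F^2 = \sum_{i,j} m_{ij}^2 \le mn$. Next, because $M$ has rank at most $r$, it has at most $r$ nonzero singular values $s_1, \ldots, s_r$, and $\|M\|_F^2 = \sum_{i=1}^r s_i^2$. The nuclear norm is $\|M\|_* = \sum_{i=1}^r s_i$, so by Cauchy--Schwarz,
\[
\|M\|_* \le \sqrt{r}\,\Bigl(\sum_{i=1}^r s_i^2\Bigr)^{1/2} = \sqrt{r}\,\|M\|_F \le \sqrt{rmn}.
\]

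Plugging this bound into the first term of the minimum in Theorem~\ref{mainest} gives
\[
\frac{\|M\|_*}{m\sqrt{np}} \le \frac{\sqrt{rmn}}{m\sqrt{np}} = \sqrt{\frac{r}{mp}},
\]
and the trivial bound $1$ is already present in the minimum. This yields the stated upper bound on MSE$(\hat{M})$, with the same constants $C$, $c$, $C(\ep)$ inherited from Theorem~\ref{mainest}. The symmetric case follows identically by applying the symmetric version of Theorem~\ref{mainest} with $m=n$.

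There is no real obstacle here; the entire content is the Cauchy--Schwarz bound $\|M\|_* \le \sqrt{r}\,\|M\|_F$ combined with the elementwise bound $\|M\|_F \le \sqrt{mn}$. The second term $\|M\|_*^2/(mn)$ appearing in Theorem~\ref{mainest} would give $r$, which is worse than $1$ (and worse than $\sqrt{r/(mp)}$ in the relevant regime), so it can be discarded in the minimum without loss.
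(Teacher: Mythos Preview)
Your proof is correct and essentially identical to the paper's own argument: the paper also invokes the Cauchy--Schwarz inequality $\|M\|_*\le \sqrt{\rank(M)}\,\|M\|_F\le \sqrt{rmn}$ and then applies Theorem~\ref{mainest}.
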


The term $1/np$ in the error bound is necessary to take care of the
case $r=0$. Even if $M$ is identically zero, the estimator $\hat{M}$
will incur some error due to the (possible) randomness in $X$.


Let us now inspect how the condition $r\ll mp$ compares with available
bounds. In a notable sequence of papers, Keshavan, Montanari and Oh
\cite{kmo10a,kmo10b} obtain the same condition but only if $m$ and $n$
are comparable and the rank is known. Theorem~\ref{lowrankthm}, on the
other hand, works even for ``very rectangular'' matrices where $m\ll n$
and the rank is unknown. 

Cand\`es and Tao~\cite{candestao10} obtain the condition $r\ll mp$ with
an extra poly-logarithmic term in the error. Moreover, they too require
that $m$ and $n$ be comparable, and additionally they need the
so-called ``incoherence condition''. However, as noted before, the
incoherence condition allows exact recovery, while our approach only
gives approximate recovery.

The recent important work of Davenport et~al.~\cite{davenport} gives an
estimator with an error bound that is almost the same as that given by
Theorem~\ref{lowrankthm}, but with a complicated optimization algorithm.

Theorem~\ref{lowrankthm}, however, is probably not an optimal result.
It has been shown by Koltchinskii et al.~\cite{klt}, Theorems~3 and~5, that the true minimax error rate for a closely related problem
is actually $r/mp$, up to a logarithmic factor.

The following theorem shows that the condition $r\ll mp$ is necessary
for estimating $M$.

\begin{thm}\label{lowranklow}
Given any estimator $\tilde{M}$, there exists an $m\times n$ matrix $M$
of rank $r$ with entries bounded between $-1$ and $1$, such that when
the data is sampled from $M$,
\[
\operatorname{MSE}(\tilde{M}) \ge C(1-p)^{[m/r]},
\]
where $C$ is a positive universal constant and $[m/r]$ is the integer
part of~$m/r$.
\end{thm}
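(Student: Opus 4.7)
The plan is to lower-bound the worst-case MSE by a Bayes risk over a carefully chosen family of rank-$r$ matrices, exploiting the fact that if all copies of a given ``free parameter'' happen to be unobserved, no information about it is revealed. I would work in the sub-model $X=M$, allowed (as in Theorem~\ref{minimaxthm}) since it only makes the estimator's task easier. Set $k:=[m/r]$ and partition the first $rk$ rows of the $m\times n$ matrix into $r$ blocks $G_1,\ldots,G_r$ of size $k$, placing the remaining $m-rk<r$ rows into a block $G_0$. For each $v=(v_{ij})\in\{-1,+1\}^{r\times n}$, define $M(v)$ to be the matrix whose $i'$-th row equals $v_i$ when $i'\in G_i$ ($1\le i\le r$) and equals zero when $i'\in G_0$. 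Every $M(v)$ has entries in $[-1,1]$ and rank at most $r$. Equip this family with the uniform prior $\pi$ on $v$.

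For any estimator $\tilde M$, the worst-case MSE is at least $\ee_\pi[\mathrm{MSE}(\tilde M)]$. For an entry $(i',j)$ with $i'\in G_i$ and $i\ge 1$, the bias--variance inequality gives $\ee[(\tilde m_{i'j}-v_{ij})^2]\ge\ee[\var(v_{ij}\mid\textup{data})]$. The key point is that on the event $A_{ij}$ that none of the $k$ entries of block-column $(i,j)$ is observed---of probability $(1-p)^k$, independent of $v_{ij}$---the data is independent of $v_{ij}$: the observation pattern does not depend on $v$, and the observed values reveal only entries $v_{i^*,j^*}$ with $(i^*,j^*)\ne(i,j)$, which are independent of $v_{ij}$ under $\pi$. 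Hence the posterior of $v_{ij}$ is still uniform on $\{-1,+1\}$, contributing conditional variance $1$; on $A_{ij}^c$ it is exactly determined, contributing $0$. Summing over the $rn$ block-columns, each governing $k$ matrix entries, gives
\[
\ee_\pi[\mathrm{MSE}(\tilde M)]\ \ge\ \frac{1}{mn}\cdot rkn\cdot(1-p)^k\ =\ \frac{rk}{m}(1-p)^k.
\]

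Writing $rk=m-b$ with $0\le b\le r-1$ yields $rk/m>1-r/m$; so if $r\le m/2$ then $rk/m>1/2$, while if $r>m/2$ then $k=1$ and $rk/m=r/m>1/2$. Thus $\ee_\pi[\mathrm{MSE}(\tilde M)]\ge\tfrac12(1-p)^{[m/r]}$, and since max dominates expectation, some $M=M(v)$ in the support of $\pi$ achieves this bound. The remaining issue is that $M(v)$ has rank exactly $r$ only when $v_1,\ldots,v_r$ are linearly independent, which fails for an exponentially small (in $n-r$) fraction of $v$'s when $n\ge r$; restricting $\pi$ to the independent subset leaves the lower bound essentially unchanged, or alternatively one may perturb a rank-$<r$ witness infinitesimally using the continuity of $\mathrm{MSE}(\tilde M;M)$ in $M$. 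The main obstacle I anticipate is identifying this family in the first place: the exponent $[m/r]$ in the stated bound is exactly the number of Bernoulli($p$) trials that would have to all fail to hide a single ``degree of freedom'', which points directly at a block-constant construction replicating each free parameter $[m/r]$ times.
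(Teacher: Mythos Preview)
Your proof is correct and follows essentially the same Bayesian argument as the paper: place a prior on block-constant matrices obtained by replicating each of $r$ random rows $[m/r]$ times, then lower-bound each entry's error by the posterior variance of the corresponding free parameter, which equals the prior variance on the event (of probability $(1-p)^{[m/r]}$) that none of its $[m/r]$ copies is observed. The only notable difference is that the paper draws the free entries i.i.d.\ Uniform$[-1,1]$ rather than from $\{-1,+1\}$; this has the side benefit that the $r$ free rows are linearly independent almost surely, so the rank-exactly-$r$ issue you flag at the end never arises.
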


\subsection{The stochastic blockmodel}\label{blockmodel}
Consider an undirected graph on $n$ vertices. A stochastic blockmodel
assumes that the vertices $1,\ldots, n$ are partitioned into $k$
blocks, and the probability that vertex $i$ is connected to vertex $j$
by an edge depends only on the blocks to which $i$ and $j$ belong. As
usual, edges are independent of each other. Let $M$ be the matrix whose
$(i,j)$th element is the probability of an edge existing between
vertices $i$ and $j$. The matrix $X$ here is the adjacency matrix of
the observed graph. Here, all elements of $X$ are observed, so $p=1$.

This is commonly known as the stochastic blockmodel. It was introduced
by Holland, Laskey and Leinhardt \cite{hll83} as a simple stochastic
model of social networks. It has become one of the most successful and
widely used models for community structure in networks, especially
after the advent of large data sets.

Early analysis of the stochastic blockmodel was carried out by Snijders
and Nowicki \cite{sn97,sn01}, who provided consistent parameter
estimates when there are exactly two blocks. This was extended to a
finite but fixed number of blocks of equal size by Condon and Karp \cite{condonkarp01}. Bickel and Chen \cite{bickelchen09} were the first to
give consistent estimates for finite number of blocks of unequal size.
It was observed by Leskovec et~al.~\cite{leskovecetal08} that in real
data, the number of blocks often seem to grow with the number of nodes.
This situation was rigorously analyzed for the first time in Rohe
et~al.~\cite{rcy}, and was followed up shortly thereafter by \cite{bcl11,cwa,mossel12,chaudhurietal12} with more advanced results.

However, all in all, I am not aware of any estimator for the stochastic
blockmodel that works whenever the number of blocks is small compared
to the number of nodes. The best result till date is in the very recent
manuscript of Rohe et~al.~\cite{roheetal12}, who prove that a penalized
likelihood estimator works whenever $k$ is comparable to $n$ ``up to log
factors.'' The following theorem says that the USVT estimator $\hat{M}$
gives a complete solution to the estimation problem in the stochastic
blockmodel if $k\ll n$, with no further conditions required. (The
method will not work very well for sparse graphs, however; for recent
advances on estimation in sparse graphs, see \cite{chenetal12}.)

\begin{thm}\label{blockthm}
For a stochastic blockmodel with $k$ blocks,
\[
\operatorname{MSE}(\hat{M}) \le C\sqrt{\frac{k}{n}},
\]
where $C$ is a constant that depends only on our choice of $\eta$.
\end{thm}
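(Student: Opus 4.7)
The plan is to reduce Theorem~\ref{blockthm} to Theorem~\ref{lowrankthm} via a simple rank bound on $M$. First I would observe that the probability matrix of a stochastic blockmodel with $k$ blocks has rank at most $k$. Explicitly, if $b:\{1,\ldots,n\}\to\{1,\ldots,k\}$ assigns each vertex to a block and $\pi$ denotes the $k\times k$ matrix of block-to-block edge probabilities, then $m_{ij}=\pi_{b(i),b(j)}$. Writing $Z$ for the $n\times k$ binary block-assignment matrix (with $Z_{i,b(i)}=1$ and zero elsewhere), one gets the factorization $M=Z\pi Z^T$, which immediately gives $\mathrm{rank}(M)\le k$.

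Next I would verify that the hypotheses of Theorem~\ref{lowrankthm} (symmetric version) are in force. The matrices $M$ and $X$ are symmetric of order $n$; $M$ has entries in $[0,1]\subset[-1,1]$, and $X$ is the adjacency matrix of the observed graph with entries in $\{0,1\}$, so $|x_{ij}|\le 1$ almost surely. The entries $x_{ij}$ for $i\le j$ are independent Bernoulli variables with mean $m_{ij}$, and each is observed, so $p=1$ and the condition $p\ge n^{-1+\ep}$ holds for any $\ep\in(0,1)$ (say $\ep=1/2$). Applying Theorem~\ref{lowrankthm} with $m=n$, $p=1$, and $r\le k$ yields
\[
\mathrm{MSE}(\hat{M})\le C\min\biggl\{\sqrt{\frac{k}{n}},\,1\biggr\}+C(\ep)e^{-cn}.
\]

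Finally I would absorb the exponential discrepancy into the main term. Since $k\ge 1$ implies $\sqrt{k/n}\ge 1/\sqrt{n}$, and $\sqrt{n}\,e^{-cn}$ is bounded above by some absolute constant $C_0$, one has $C(\ep)e^{-cn}\le C(\ep)C_0\sqrt{k/n}$. Combining gives $\mathrm{MSE}(\hat{M})\le C'\sqrt{k/n}$ with $C'$ depending only on $\eta$, as claimed. There is essentially no obstacle in this argument; the entire content of the theorem is the rank bound $\mathrm{rank}(M)\le k$, after which Theorem~\ref{lowrankthm} does all the work.
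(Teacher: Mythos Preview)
Your proposal is correct and follows essentially the same approach as the paper: observe that $\rank(M)\le k$ and then apply Theorem~\ref{lowrankthm} with $m=n$ and $p=1$. The paper argues the rank bound via ``at most $k$ distinct rows'' rather than your factorization $M=Z\pi Z^T$, and it leaves the absorption of the $C(\ep)e^{-cn}$ term implicit, but these are cosmetic differences.
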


Note that estimating the stochastic blockmodel is a special case of low
rank matrix estimation with noise. It is not difficult to prove that
the estimation problem is impossible when $k$ is of the same order as
$n$. We will not bother to write down a formal proof.

\subsection{Distance matrices}\label{distance}
Suppose that $K$ is a compact metric space with metric~$d$. Let
$x_1,\ldots, x_n$ be arbitrary points from $K$, and let $M$ be the
$n\times n$ matrix whose $(i,j)$th entry is $d(x_i, x_j)$. Such
matrices are called ``distance matrices''. Since $K$ is a compact metric
space, the diameter of $K$ with respect to the metric~$d$ must be
finite. Scaling $d$ by a constant factor, we may assume without loss of
generality that the diameter is bounded by $1$, so that the entries of
$M$ are bounded by $1$ as required by Theorem~\ref{mainest}.

Completing a distance matrix with missing entries has been a popular
problem in the engineering and social sciences for a long time; see,
for example, \cite{sd74,bj95,alfakihetal99,biswasetal06,singer08,singer10}. It has become particularly relevant in engineering problems
related to sensor networks. It is also an important issue in
multidimensional scaling \cite{borggroenen10}. For some recent
theoretical advances, see~\cite{ohetal10,jm11}.

In general, distance matrices need not be of low rank. Therefore, much
of the literature on matrix estimation and completion does not apply to
distance matrices. Surprisingly, Theorem~\ref{mainest} gives a complete
solution of the distance matrix completion and estimation problem.



\begin{thm}\label{compact1}
Suppose that $p\ge n^{-1+\ep}$ for some $\ep> 0$. If $M$ is a distance
matrix as above, then
\[
\operatorname{MSE}(\hat{M}) \le\frac{C(K, d, n)}{\sqrt{p}} + C(\ep)e^{-cnp},
\]
where $c$ depends only on $\eta$, $C(\ep)$ depends only on $\ep$ and
$\eta$, and $C(K,d,n)$ is a~number depending only on $K$, $d$, $n$ and
$\eta$ such that
\[
\lim_{n\ra\infty} C(K,d,n)=0.
\]
\end{thm}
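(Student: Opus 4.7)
The plan is to invoke Theorem \ref{mainest} and show that for a distance matrix $M$ coming from $n$ points in a compact metric space, the nuclear norm satisfies $\|M\|_* = o(n^{3/2})$ uniformly in the choice of points. Since the main term of Theorem \ref{mainest} (with $m=n$) is $C\|M\|_*/(n\sqrt{np}) = C(\|M\|_*/n^{3/2})/\sqrt{p}$, setting
\[
C(K,d,n) := C\cdot \sup_{x_1,\ldots,x_n \in K} \frac{\|M(x_1,\ldots,x_n)\|_*}{n^{3/2}}
\]
will yield exactly the form of the claimed error bound with $C(K,d,n) \to 0$.

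The heart of the argument is therefore the nuclear-norm estimate, which I would obtain by a compactness / finite net approximation. Fix $\delta > 0$. By compactness of $K$, choose a finite $\delta$-net $y_1,\ldots,y_N \in K$ where $N = N(\delta, K, d)$, and for each $i$ let $\pi(i)$ be the index of a nearest net point to $x_i$. Define $\tilde M$ by $\tilde m_{ij} = d(y_{\pi(i)}, y_{\pi(j)})$. Then $\tilde M$ is obtained by duplicating rows/columns of the fixed $N\times N$ matrix $D_{ab} = d(y_a,y_b)$, so $\operatorname{rank}(\tilde M) \le N$; using $\|A\|_* \le \sqrt{\operatorname{rank}(A)}\,\|A\|_F$ and the bound $|\tilde m_{ij}| \le 1$, we get
\[
\|\tilde M\|_* \le \sqrt{N}\,\|\tilde M\|_F \le \sqrt{N}\cdot n.
\]
By the triangle inequality for $d$, every entry of $M - \tilde M$ is bounded by $2\delta$, so $\|M-\tilde M\|_F \le 2\delta n$, and since $M - \tilde M$ is $n\times n$ its rank is at most $n$, giving $\|M-\tilde M\|_* \le \sqrt{n}\cdot 2\delta n = 2\delta n^{3/2}$. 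Combining,
\[
\frac{\|M\|_*}{n^{3/2}} \le \sqrt{\frac{N(\delta,K,d)}{n}} + 2\delta.
\]
Letting $n \to \infty$ with $\delta$ fixed gives $\limsup_{n\to\infty} \|M\|_*/n^{3/2} \le 2\delta$, and since $\delta > 0$ is arbitrary, $\|M\|_*/n^{3/2} \to 0$ uniformly over choices of $x_1,\ldots,x_n$ in $K$.

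Plugging this into Theorem \ref{mainest} (the first term in the minimum, $\|M\|_*/(n\sqrt{np})$, controls the other two in the relevant regime) produces the stated bound with $C(K,d,n) \to 0$. The exponential term and the hypothesis $p \ge n^{-1+\ep}$ are inherited verbatim from Theorem \ref{mainest}. The main conceptual step, and the only nontrivial one, is the nuclear-norm estimate; everything else is bookkeeping. The reason compactness is the right hypothesis is precisely that it supplies finite $\delta$-nets, which in turn let us approximate $M$ in Frobenius norm by a matrix of rank bounded independently of $n$.
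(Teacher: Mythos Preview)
Your proposal is correct and follows essentially the same approach as the paper: bound $\|M\|_*$ by splitting $M = (M-\tilde M) + \tilde M$ where $\tilde M$ has rank at most the size of a $\delta$-net, control $\|M-\tilde M\|_*$ via $\sqrt{n}\|M-\tilde M\|_F$, and then feed the resulting bound $\|M\|_*/n^{3/2} \le 2\delta + \sqrt{N(\delta)/n}$ into Theorem~\ref{mainest}. The paper packages this same computation into a general Lemma~\ref{function} (so that it can be reused for latent space models) and then derives Theorem~\ref{compact1} from Theorem~\ref{compact2} by choosing $\delta_n\to 0$ slowly enough that $N(\delta_n/4)=o(n)$, but the underlying argument is identical to yours.
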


The above theorem is not wholly satisfactory, since it does not
indicate how fast $p$ can go to zero as $n\ra\infty$ so that $\hat{M}$
is still consistent. To understand that, we need to know more about the
structure of the space $K$. The following theorem gives a quantitative estimate.

\begin{thm}\label{compact2}
Suppose that for each $\delta> 0$, $N(\delta)$ is a number such that
$K$ may be covered by $N(\delta)$ open $d$-balls of radius $\delta$. Then
\[
\operatorname{MSE}(\hat{M}) \le C\inf_{\delta> 0} \min \biggl\{
\frac{\delta
+ \sqrt{N(\delta/4)/n}}{\sqrt{p}} , 1 \biggr\} + C(\ep)e^{-cnp},
\]
where $C$ and $c$ depend only on $\eta$ and $C(\ep)$ depends only on
$\ep$ and $\eta$.
\end{thm}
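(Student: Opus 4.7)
The plan is to combine a covering argument with the nuclear norm bound of Theorem \ref{mainest}. Fix $\delta > 0$ and cover $K$ by $N(\delta/4)$ open balls of radius $\delta/4$ centered at points $c_1,\ldots,c_{N(\delta/4)}\in K$. For each $i\in\{1,\ldots,n\}$, choose an index $a(i)$ such that $d(x_i,c_{a(i)})\le \delta/4$, and define the approximate distance matrix $M'$ by $m'_{ij} := d(c_{a(i)},c_{a(j)})$. By construction $M'$ is symmetric, has entries in $[0,1]$, and its distinct rows are indexed by $\{1,\ldots,N(\delta/4)\}$, so
\[
\rank(M') \le N(\delta/4).
\]
The triangle inequality in $(K,d)$ yields
\[
|m_{ij}-m'_{ij}| \le d(x_i,c_{a(i)})+d(x_j,c_{a(j)})\le \tfrac{\delta}{2}
\]
for every $i,j$, so $\|M-M'\|_F \le n\delta/2$.

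Next I would bound $\|M\|_*$ by the triangle inequality for the nuclear norm together with the elementary estimate $\|A\|_* \le \sqrt{\rank(A)}\,\|A\|_F$ (Cauchy--Schwarz on the singular values). Since $\|M'\|_F \le n$ and $\rank(M-M')\le n$, this gives
\[
\|M\|_* \le \|M'\|_* + \|M-M'\|_* \le n\sqrt{N(\delta/4)} + \sqrt{n}\cdot \tfrac{n\delta}{2} = n\sqrt{N(\delta/4)} + \tfrac{n^{3/2}\delta}{2}.
\]
Inserting this into the symmetric version of Theorem~\ref{mainest} (with $m=n$), the nuclear-norm term becomes
\[
\frac{\|M\|_*}{n\sqrt{np}} \le \frac{\sqrt{N(\delta/4)}}{\sqrt{np}} + \frac{\delta}{2\sqrt{p}} = \frac{\sqrt{N(\delta/4)/n}+\delta/2}{\sqrt{p}}.
\]

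Theorem~\ref{mainest} therefore gives
\[
\mathrm{MSE}(\hat{M}) \le C\min\biggl\{\frac{\sqrt{N(\delta/4)/n}+\delta/2}{\sqrt{p}}\,,\,1\biggr\} + C(\ep) e^{-cnp},
\]
and since $\delta>0$ was arbitrary we may take the infimum to obtain the stated bound (absorbing the factor $1/2$ into $C$). The only nontrivial ingredient is the simultaneous control of $\|M'\|_*$ via its rank and $\|M-M'\|_*$ via its small Frobenius norm; everything else is a routine application of the main theorem and the triangle inequality. There is no serious obstacle, the only mild subtlety being the bookkeeping that converts the two competing approximation errors into a single expression of the form $(\delta + \sqrt{N(\delta/4)/n})/\sqrt{p}$ rather than, say, $\delta^2 + N(\delta/4)/(np)$; this is forced by using the nuclear norm (degree-one in $\delta$) rather than the Frobenius norm.
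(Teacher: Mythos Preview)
Your proof is correct and follows essentially the same route as the paper: approximate $M$ by a low-rank matrix built from covering representatives, bound the nuclear norm via the triangle inequality and the rank--Frobenius inequality $\|A\|_*\le\sqrt{\rank(A)}\,\|A\|_F$, and apply Theorem~\ref{mainest}. The paper packages this argument as a standalone lemma (Lemma~\ref{function}) applicable to general latent-space-type matrices and uses partition representatives chosen from $\{x_1,\ldots,x_n\}$ rather than the ball centers $c_k$ (which sidesteps the minor issue of whether the centers lie in $K$), but the substance is identical.
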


To see how Theorem~\ref{compact2} may be used, suppose that $K$ is a
compact subset of the real line and $d$ is the usual distance on $\rr$,
scaled by a factor to ensure that the diameter of $K$ is ${\le}1$. Then
$N(\delta)$ increases like $1/\delta$ as $\delta\ra0$. Consequently,
given~$n$, the optimal choice of $\delta$ is of the order $n^{-1/3}$,
which gives the bound
\[
\operatorname{MSE}(\hat{M})\le\frac{Cn^{-1/3}}{\sqrt{p}}.
\]
(Note that the\vspace*{1pt} exponential term need not appear because the main term
is bounded below by a positive constant if $p< n^{-2/3}$.) Thus, $\hat
{M}$ is a consistent estimate as long as $p$ goes to zero slower than
$n^{-2/3}$ as $n\ra\infty$.

\subsection{Latent space models}\label{latent}
Suppose that $\beta_1,\ldots,\beta_n$ are vectors belonging to some
bounded closed set $K\subseteq\rr^k$, where $k$ is some arbitrary but
fixed dimension. Let $f \dvtx K\ra[-1,1]$ be a continuous function. Let $M$
be the $n\times n$ matrix whose $(i,j)$th element is $f(\beta_i, \beta
_j)$. Then our data matrix $X$ has the form
\[
x_{ij}=f(\beta_i, \beta_j)+
\ep_{ij},
\]
where $\ep_{ij}$ are independent errors with zero mean, satisfying the
restriction that $|x_{ij}|\le1$ almost surely. For example, $X$ may be
the adjacency matrix of a random graph where the probability of an edge
existing between vertices $i$ and $j$ is $f(\beta_i, \beta_j)$. This is
one context where latent space models are widely used, starting with
the work of Hoff, Raftery and Handcock \cite{hrh02}. A large body of
work applying the latent space approach to real data has grown in the
last decade. On the theoretical side, it was observed in \cite{bickelchen09,bcl11} that the latent space model arises naturally from
an exchangeability assumption due to the Aldous--Hoover theorem \cite{aldous81,hoover82}. Note that distance matrices and stochastic
blockmodels are both special cases of latent space models.

There have been various attempts to estimate parameters in the latent
space models (e.g., \cite{hrh02,hrt07,airoldietal08}). Almost all of
these approaches rely on heuristic arguments and justification through
simulations. The problem is that in addition to the vectors $\beta
_1,\ldots,\beta_n$, the function $f$ itself is an unknown parameter. If
either $\beta_i$'s are known, or $f$ is known, the estimation problem
is tractable. For example, when $f(x,y)$ is of the form
$e^{x+y}/(1+e^{x+y})$, the problem was solved in \cite{cds}. However,
when both $f$ and $\beta_i$'s are unknown, the problem becomes
seemingly intractable. In particular, there is an identifiability issue
because $f(x,y)$ may be replaced by $h(x,y) := f(g(x), g(y))$ and $\beta
_i$ by $g^{-1}(\beta_i)$ for any invertible function $g$ without
altering the model.

In view of the above discussion, it is a rather surprising consequence
of Theorem~\ref{mainest} that it is possible to estimate the numbers
$f(\beta_i,\beta_j)$, $i,j=1,\ldots, n$ from a single realization of
the data matrix, under no additional assumptions than the stated ones.

\begin{thm}\label{latent1}
Suppose that $p\ge n^{-1+\ep}$. If $M$ is as above, then
\[
\operatorname{MSE}(\hat{M})\le\frac{C(K, k, f,n)}{\sqrt{p}} + C(\ep)e^{-cnp},
\]
where $c$ depends only on $\eta$, $C(\ep)$ depends only on $\ep$ and
$\eta$, and $C(K, k, f, n)$ depends only on $K$, $k$, $f$, $n$ and $\eta
$ such that
\[
\lim_{n\ra\infty} C(K,k,f,n)=0.
\]
\end{thm}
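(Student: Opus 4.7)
The plan is to invoke Theorem \ref{mainest} in the symmetric case ($m=n$), which reduces the question to controlling the nuclear norm of $M$: indeed
\[
\textup{MSE}(\hat{M}) \;\le\; C\,\frac{\|M\|_*}{n^{3/2}\sqrt{p}} \,+\, C(\ep)e^{-cnp},
\]
so it will suffice to show that $\|M\|_*/n^{3/2}$ is bounded by some $C(K,k,f,n)\to 0$. My approach is to approximate $M$ entrywise by a block-constant (hence low-rank) matrix $M'$ built from a partition of $K$ into small pieces, and then to bound $\|M\|_*$ via $\|M\|_* \le \|M'\|_* + \|M-M'\|_*$ together with the elementary inequality $\|A\|_* \le \sqrt{\rank(A)}\,\|A\|_F$.

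Since $K\subseteq \rr^k$ is bounded and closed it is compact, so $f$ is uniformly continuous on $K\times K$; let
\[
\omega(\delta) := \sup\bigl\{|f(x,y)-f(x',y')|\,:\, \|x-x'\|\le \delta,\ \|y-y'\|\le \delta\bigr\},
\]
which satisfies $\omega(\delta)\to 0$ as $\delta\to 0$. For each $\delta>0$, cover $K$ by $N(\delta)$ Euclidean balls of radius $\delta$; since $K$ is a bounded subset of $\rr^k$, a standard volume packing argument gives $N(\delta)\le C_K\,\delta^{-k}$. Take a Borel refinement $K = K_1\sqcup\cdots\sqcup K_{N(\delta)}$ with $\mathrm{diam}(K_a)\le 2\delta$, pick representatives $\beta_a^*\in K_a$, and let $a(i)$ denote the index with $\beta_i\in K_{a(i)}$. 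Define the approximating matrix $M'$ by $m'_{ij} := f(\beta_{a(i)}^*,\beta_{a(j)}^*)$.

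By construction $M'$ is constant on each of the at most $N(\delta)\times N(\delta)$ blocks of rows/columns induced by the partition, so $\rank(M')\le N(\delta)$, while $\|M'\|_F\le n$ because the entries lie in $[-1,1]$. Also $|m_{ij}-m'_{ij}|\le \omega(2\delta)$, whence $\|M-M'\|_F\le n\,\omega(2\delta)$ and trivially $\rank(M-M')\le n$. Applying $\|A\|_*\le \sqrt{\rank(A)}\,\|A\|_F$ to each piece,
\[
\|M\|_* \;\le\; \sqrt{N(\delta)}\cdot n \,+\, \sqrt{n}\cdot n\,\omega(2\delta),
\]
so $\|M\|_*/n^{3/2}\le \sqrt{N(\delta)/n} + \omega(2\delta)$.

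Finally, choose $\delta=\delta_n\to 0$ slowly enough that $N(\delta_n)/n\to 0$: for instance $\delta_n = n^{-1/(2k)}$ gives $N(\delta_n)/n \le C_K n^{-1/2}$, while $\omega(2\delta_n)\to 0$ by uniform continuity. Setting
\[
C(K,k,f,n) := C\bigl(\sqrt{N(\delta_n)/n} \,+\, \omega(2\delta_n)\bigr)
\]
then closes the argument. There is no serious obstacle here; the proof is a direct combination of uniform continuity on the compact set $K\times K$, the covering estimate $N(\delta)=O_K(\delta^{-k})$ in fixed dimension, and Theorem \ref{mainest}. The only delicate point is balancing the discretization error $\omega(2\delta_n)$ against the rank penalty $\sqrt{N(\delta_n)/n}$, and both can be driven to zero simultaneously precisely because $k$ is held fixed independent of $n$.
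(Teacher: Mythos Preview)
The proposal is correct and follows essentially the same approach as the paper. The paper factors the argument through an auxiliary Lemma~\ref{function} (which bounds $\|M\|_*$ by splitting $M$ into a low-rank block-constant piece $N$ and a small remainder, then applies Theorem~\ref{mainest}), whereas you carry out exactly that splitting inline; your extra step of quoting the volumetric bound $N(\delta)\le C_K\delta^{-k}$ and the explicit choice $\delta_n=n^{-1/(2k)}$ is more than is strictly needed for Theorem~\ref{latent1} (the paper simply uses compactness to ensure $N(\delta_n)=o(n)$), but it does no harm.
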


The problem with Theorem~\ref{latent1}, just like Theorem~\ref{compact1} in Section~\ref{distance}, is that it does not give an
explicit error bound, which makes it impossible to determine how fast
$p$ can go to zero with $n$ so that consistency holds. Again, this is
easy to fix by assuming smoothness properties of $f$ and applying
Lemma~\ref{function}. As a particular example, suppose that $f$ is
Lipschitz with Lipschitz constant $L$, in the sense that
\[
\bigl|f(x,y)-f\bigl(x', y'\bigr)\bigr| \le L
\bigl\|x-x'\bigr\|+L \bigl\|y-y'\bigr\|
\]
for all $x,y,x',y'\in K$.

\begin{thm}\label{latent2}
In the above setting,
\[
\operatorname{MSE}(\hat{M}) \le C(K,k,L) \frac{n^{-1/(k+2)}}{\sqrt{p}},
\]
where $C(K,k,L)$ is a constant depending only on $K$, $k$, $L$ and $\eta$.
\end{thm}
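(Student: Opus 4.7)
The plan is to apply Theorem \ref{mainest} (symmetric case, so $m=n$), which bounds $\mathrm{MSE}(\hat M)$ by a constant multiple of $\|M\|_*/(n\sqrt{np})$ plus an exponential remainder, and then control $\|M\|_*$ by replacing $M$ with a low-rank approximation obtained from quantizing the latent vectors $\beta_i$ over a $\delta$-net of $K$. The Lipschitz hypothesis on $f$ will let me trade the rank of that approximation against its entrywise error, and optimizing the trade-off will produce the exponent $1/(k+2)$.

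Since $K$ is a bounded subset of $\rr^k$, a standard volume argument yields a constant $c(K,k)$ such that for every $\delta>0$, $K$ may be covered by $N(\delta)\le c(K,k)\delta^{-k}$ balls of radius $\delta$. Fix such a cover with centers $\gamma_1,\ldots,\gamma_{N(\delta)}$ and, for each $i$, let $\tilde\beta_i$ be the center of some ball containing $\beta_i$. Define $\tilde M$ by $\tilde m_{ij}:=f(\tilde\beta_i,\tilde\beta_j)$. Since each row of $\tilde M$ is determined solely by the ball into which $\beta_i$ falls, $\tilde M$ has at most $N(\delta)$ distinct rows, hence $\rank(\tilde M)\le N(\delta)$; combined with $|\tilde m_{ij}|\le 1$, this yields $\|\tilde M\|_*\le \sqrt{N(\delta)}\,\|\tilde M\|_F\le n\sqrt{N(\delta)}$. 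The Lipschitz bound gives $|m_{ij}-\tilde m_{ij}|\le 2L\delta$, so $\|M-\tilde M\|_F\le 2L\delta\, n$, and therefore $\|M-\tilde M\|_*\le \sqrt n\,\|M-\tilde M\|_F\le 2Ln^{3/2}\delta$. The triangle inequality for the nuclear norm then gives $\|M\|_*\le n\sqrt{N(\delta)}+2Ln^{3/2}\delta$.

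Feeding this into Theorem \ref{mainest} and dividing by $n\sqrt{np}$,
\[
\mathrm{MSE}(\hat M)\le C\sqrt{\frac{N(\delta)}{np}}+\frac{2CL\delta}{\sqrt p}+C(\ep)e^{-cnp}.
\]
Using $N(\delta)\le c(K,k)\delta^{-k}$ and balancing the first two terms by setting $\delta^{-k/2}/\sqrt n \asymp L\delta$, I take $\delta\asymp L^{-2/(k+2)}n^{-1/(k+2)}$; at this $\delta$, both main terms are of order $L^{k/(k+2)}n^{-1/(k+2)}/\sqrt p$. This produces the claimed leading term $C(K,k,L)\,n^{-1/(k+2)}/\sqrt p$.

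The only remaining point is the exponential residual. When $C(K,k,L)\,n^{-1/(k+2)}/\sqrt p \ge 1$ the conclusion is trivial, so I may assume $p\ge n^{-1+k/(k+2)}$, which is of the form $n^{-1+\ep}$ with $\ep=k/(k+2)>0$. In this regime Theorem \ref{mainest} applies and $C(\ep)e^{-cnp}$ is superpolynomially smaller than $n^{-1/(k+2)}/\sqrt p$, so it is absorbed into the final constant. I do not expect a serious obstacle: the argument is a covering plus optimization on top of the main theorem, and the only non-routine bookkeeping item is the verification that the required lower bound on $p$ in Theorem \ref{mainest} is automatic in the non-trivial regime.
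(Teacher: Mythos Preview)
Your proof is correct and follows essentially the same approach as the paper: the paper routes the argument through its general Lemma~\ref{function} (covering $\to$ low-rank approximation $\to$ nuclear norm bound via triangle inequality and \eqref{csineq0} $\to$ Theorem~\ref{mainest}), whereas you have reproved that lemma inline for the Lipschitz case and then optimized $\delta$ the same way. Your handling of the exponential remainder --- noting that the bound is trivial unless $p\ge c\,n^{-2/(k+2)}$, which puts you in the regime $p\ge n^{-1+\ep}$ with $\ep=k/(k+2)$ --- is exactly the paper's observation as well.
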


\subsection{Positive definite matrices}\label{positive}
Assume that $m=n$ and $M$ is positive semi-definite. (In the
statistical context, this is the same as saying that $M$ is a
covariance matrix. When the diagonal entries are all $1$, $M$ is a
correlation matrix.)

Completing positive definite matrices with missing entries has received
a lot of attention in the linear algebra literature \cite{groneetal84,johnson90,bhatia08}, although most of the techniques are applicable
only for relatively small matrices or when a sizable fraction of the
entries are observed. In the engineering sciences, estimation of
covariance matrices from a small subset of observed entries arises in
the field of remote sensing (see \cite{candesplan10,candesrecht09,candestao10} for brief discussions).

The statistical matrix completion literature cited in Section~\ref{intro} applies only to low rank positive definite matrices. It is
therefore quite a surprise that the completion problem may be solved
for \textit{any} positive definite matrix whenever we get to observe a
large number of entries from each row. 

\begin{thm}\label{pdthm}
Suppose that $m=n$ and $M$ is positive semi-definite. Suppose that
$p\ge n^{-1+\ep}$. Then
\[
\operatorname{MSE}(\hat{M}) \le\frac{C}{\sqrt{np}} + C(\ep) e^{-cnp},
\]
where $C$ and $c$ depend only on $\eta$ and $C(\ep)$ depends only on
$\ep$ and $\eta$.
\end{thm}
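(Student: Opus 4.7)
The plan is to reduce Theorem \ref{pdthm} to Theorem \ref{mainest} (applied in the symmetric model with $m=n$) by bounding the nuclear norm of a positive semidefinite matrix whose entries are bounded by $1$ in absolute value. The main term of interest in Theorem \ref{mainest} is $\|M\|_*/(m\sqrt{np})$, so the entire task reduces to showing $\|M\|_* \le n$, after which substitution yields $\|M\|_*/(n\sqrt{np}) \le 1/\sqrt{np}$.

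First I would observe that if $M$ is symmetric and positive semidefinite, then its singular values coincide with its eigenvalues $\lambda_1,\ldots,\lambda_n \ge 0$. Consequently
\[
\|M\|_* = \sum_{i=1}^n \lambda_i = \tr(M) = \sum_{i=1}^n m_{ii}.
\]
Next, since each $m_{ii}$ satisfies $|m_{ii}| \le 1$ (in fact $0 \le m_{ii} \le 1$ by positive semidefiniteness), we get $\tr(M) \le n$, hence $\|M\|_* \le n$.

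Plugging this bound into the symmetric version of Theorem \ref{mainest} (with $m=n$) gives
\[
\mathrm{MSE}(\hat{M}) \le C \min\biggl\{\frac{n}{n\sqrt{np}}\,,\, \frac{n^2}{n^2}\,,\, 1\biggr\} + C(\ep)e^{-cnp} \le \frac{C}{\sqrt{np}} + C(\ep) e^{-cnp},
\]
which is exactly the claimed bound. There is essentially no obstacle here: the work has already been done inside Theorem \ref{mainest}, and positive semidefiniteness merely supplies the clean nuclear norm bound $\|M\|_* = \tr(M) \le n$ that is not available for general symmetric matrices (whose nuclear norm can be as large as $n^{3/2}$). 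The observation driving the theorem is precisely that positive semidefiniteness forces all the `mass' of the spectrum onto one side, making the trace control the nuclear norm, and the trace is automatically $O(n)$ whenever the diagonal entries are bounded.
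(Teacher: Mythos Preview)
Your proof is correct and follows exactly the same approach as the paper: use positive semidefiniteness to write $\|M\|_*=\tr(M)\le n$, then substitute into Theorem~\ref{mainest}. The paper's own proof is the same one-line argument, with slightly less explanation of why $\|M\|_*=\tr(M)$.
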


What if $p$ is of order $1/n$ or less? The following theorem shows that
it is impossible to estimate $M$ in this situation.

\begin{thm}\label{pdlow}
Given any estimator $\tilde{M}$, there exists a correlation matrix $M$
such that when the data is sampled from $M$,
\[
\operatorname{MSE}(\tilde{M}) \ge C(1-p)^n,
\]
where $C$ is a positive universal constant.
\end{thm}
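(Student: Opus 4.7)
The plan is to reduce to the noiseless case $X = M$ and to apply a standard Bayes risk argument using rank-one sign matrices. For each $\sigma \in \{\pm 1\}^n$ with $\sigma_1 = 1$, set $M_\sigma := \sigma\sigma^T$. Each $M_\sigma$ is a rank-one positive semidefinite matrix with $1$'s on the diagonal and $\pm 1$ off-diagonal entries, hence a valid correlation matrix; and taking $X = M_\sigma$ deterministically satisfies the boundedness and mean conditions of the model. Let $\pi$ be the uniform distribution on the $2^{n-1}$ such $\sigma$. Since the minimax risk is at least the Bayes risk, it suffices to show $\ee_\pi[\mathrm{MSE}(\tilde M)] \ge C(1-p)^n$ for every estimator $\tilde M$.

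The entry-wise decomposition of the Bayes risk gives
\[
\ee_\pi[\mathrm{MSE}(\tilde M)] \ge \frac{1}{n^2}\sum_{i \ne j}\ee\bigl[\var(\sigma_i\sigma_j \mid \dd)\bigr],
\]
where $\dd$ denotes the observation pattern together with the observed entries, since the posterior mean minimizes the conditional squared error at each coordinate.

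Call row $i$ \emph{isolated} if none of its $n-1$ off-diagonal entries is observed. In the symmetric model the on-or-above-diagonal entries are sampled independently with probability $p$, so each off-diagonal entry in a given row is observed independently with probability $p$, and row $i$ is isolated with probability $(1-p)^{n-1}$. The key point is that if row $i$ is isolated and $i \ne 1$, then $\dd$ is a measurable function of the observation pattern and of $(\sigma_k)_{k \ne i}$ alone, because every observed entry is a product $\sigma_j\sigma_k$ with $j, k \ne i$; and under $\pi$ the coordinate $\sigma_i$ is independent of $(\sigma_k)_{k \ne i}$ (and of the observation pattern) and uniform on $\{\pm 1\}$. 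Hence on this event the posterior of $\sigma_i$ given $\dd$ remains uniform on $\{\pm 1\}$ and conditionally independent of the other coordinates, which forces $\ee[\sigma_i\sigma_j \mid \dd] = \ee[\sigma_i \mid \dd]\,\ee[\sigma_j \mid \dd] = 0$ for every $j \ne i$; combined with $(\sigma_i\sigma_j)^2 \equiv 1$, this gives $\var(\sigma_i\sigma_j \mid \dd) = 1$.

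Summing over $i \in \{2,\ldots,n\}$ and $j \ne i$ yields
\[
\ee_\pi[\mathrm{MSE}(\tilde M)] \ge \frac{(n-1)^2(1-p)^{n-1}}{n^2} \ge \frac{(1-p)^n}{4}
\]
for $n \ge 2$; the case $n = 1$ is vacuous. The only delicate step is the conditional-independence claim used to conclude $\var(\sigma_i\sigma_j \mid \dd) = 1$ on the isolated event; the rest is routine bookkeeping. Since the Bayes risk lower-bounds the maximum risk, some realization of $\sigma$ (and hence a correlation matrix $M_\sigma$) satisfies $\mathrm{MSE}(\tilde M) \ge C(1-p)^n$ with universal constant $C = 1/4$.
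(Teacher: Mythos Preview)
Your proof is correct and follows the same overall strategy as the paper: place a prior on the correlation matrix, identify the event that row $i$ is ``isolated'' (no off-diagonal entry in that row is observed), and use that on this event the data carry no information about the $i$th latent variable, so the conditional variance of $m_{ij}$ is bounded below. The difference lies in the choice of prior. The paper takes $U_1,\ldots,U_n$ i.i.d.\ Uniform$[0,1]$ and sets $m_{ij}=U_iU_j$ for $i\ne j$ with $m_{ii}=1$; this requires a short check that the resulting matrix is positive semidefinite (it is $uu^T$ plus a nonnegative diagonal), and on the isolation event one gets $\var(m_{ij}\mid D,(U_k)_{k\ne i})\ge C\,U_j^2$, so a further expectation over $U_j$ is needed. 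Your construction $M_\sigma=\sigma\sigma^T$ with $\sigma\in\{\pm1\}^n$ is cleaner on both counts: the matrix is manifestly a rank-one correlation matrix, and on the isolation event the conditional variance of $\sigma_i\sigma_j$ is exactly $1$, which yields the explicit constant $C=1/4$ directly. The only cost is the harmless normalization $\sigma_1=1$, which removes one index from the sum but does not affect the order of the bound.
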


\subsection{Graphon estimation}\label{graphons}
A graphon is a measurable function $f$ from $[0,1]^2$ into $[0,1]$ that
satisfies $f(x,y)\equiv f(y,x)$. The term ``graphon'' was coined by Lov\'asz and coauthors in the growing literature on limits of dense graphs
\cite{borgsetal06,borgsetal08,borgsetal07,lovaszszegedy06,lovaszbook}. Such functions also arise in the related study of weakly
exchangeable random arrays \cite{diaconisjanson08,austin08,aldous81,hoover82}. They have also appeared recently in large
deviations \cite{cv,cv2,lubetzky12} and mathematical statistics \cite{cd3,radinyin}.

In the graph limits literature, graphons arise as limits of graphs with
increasing number of nodes. Conversely, graphons are often used to
generate random graphs in a natural way. Take any $n$ and let
$U_1,\ldots, U_n$ be i.i.d. $\operatorname{Uniform}[0,1]$ random variables. Construct
a random undirected graph on $n$ vertices by putting an edge between
vertices $i$ and $j$ with probability $f(U_i, U_j)$, doing this
independently for all $1\le i< j\le n$. This procedure is sometimes
called ``sampling from a graphon'' (see \cite{borgsetal08}, Section~4.4).


The statistical question is the following: Suppose that we have a
random graph on $n$ vertices that is sampled from a graphon. Is it
possible to estimate the graphon from a single realization of the
graph? More precisely, is it possible to accurately estimate the
numbers $f(U_i, U_j)$, $1\le i< j\le n$, from a single realization of
the random graph? The question is similar to the one investigated in
Section~\ref{distance}, but the difference is that here we are not
allowed to assume any regularity on $f$ except measurability.

Taking things back to our usual setting, let $M$ be the matrix whose
$(i,j)$th element is $f(U_i,U_j)$. Note that unlike our previous
examples, $M$ is now random. So the definition of MSE should be
modified to take expectation over $M$ as well.

\begin{thm}\label{graphonthm}
In the above setting,
\[
\operatorname{MSE}(\hat{M}) \le C(f,n),
\]
where $C(f,n)$ is a constant depending only on $f$, $n$ and $\eta$,
such that
\[
\lim_{n\ra\infty} C(f,n)=0.
\]
\end{thm}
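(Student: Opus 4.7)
The plan is to reduce this to Theorem~\ref{mainest} via an approximation of the graphon by step functions, exploiting the fact that step functions are dense in $L^2([0,1]^2)$. The step-function part of the approximation will produce a nearly low-rank matrix (like a stochastic blockmodel), while the residual will have small Frobenius norm in expectation; combining these through the bound $\|A\|_* \le \sqrt{\rank(A)}\,\|A\|_F$ gives sufficient control on $\|M\|_*$.

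Concretely, fix $\delta > 0$. Since simple functions adapted to product partitions are dense in $L^2([0,1]^2)$, choose a partition $\{A_1,\ldots,A_{k}\}$ of $[0,1]$ (with $k = k(\delta)$) and a step function $f_\delta$ that is constant on each rectangle $A_a \times A_b$ such that $\|f - f_\delta\|_{L^2([0,1]^2)} \le \delta$; a canonical choice is the conditional expectation of $f$ given the product $\sigma$-algebra. Define $M_\delta$ by $(M_\delta)_{ij} = f_\delta(U_i,U_j)$ and $R = M - M_\delta$. Then $M_\delta$ has rank at most $k$, so
\[
\|M_\delta\|_* \le \sqrt{k}\,\|M_\delta\|_F \le \sqrt{k}\,n,
\]
while for $R$ we only use the trivial bound $\|R\|_* \le \sqrt{n}\,\|R\|_F$. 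Taking expectation over the i.i.d.\ uniforms $U_1,\ldots,U_n$,
\[
\ee\|R\|_F^2 = \sum_{i,j} \ee\bigl[(f(U_i,U_j)-f_\delta(U_i,U_j))^2\bigr] = n^2 \|f-f_\delta\|_{L^2}^2 \le n^2 \delta^2,
\]
so by Jensen's inequality $\ee\|R\|_* \le \sqrt{n}\,\sqrt{\ee\|R\|_F^2} \le n^{3/2}\delta$, and therefore $\ee\|M\|_* \le \sqrt{k}\,n + n^{3/2}\delta$.

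Now I apply Theorem~\ref{mainest} conditionally on $M$ (i.e., conditionally on $U_1,\ldots,U_n$) in the symmetric model with $m=n$, using the first term of the minimum; then take the outer expectation over $M$. This yields
\[
\mathrm{MSE}(\hat{M}) \le \frac{C\,\ee\|M\|_*}{n^{3/2}\sqrt{p}} + C(\ep)e^{-cnp} \le \frac{C\sqrt{k(\delta)}}{\sqrt{np}} + \frac{C\delta}{\sqrt{p}} + C(\ep)e^{-cnp}.
\]
For any fixed $\delta>0$, $k(\delta)$ is a constant, so as $n\to\infty$ the first and third terms vanish and we are left with $\limsup_{n\to\infty} \mathrm{MSE}(\hat M) \le C\delta/\sqrt{p}$. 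Since $\delta$ was arbitrary, $\mathrm{MSE}(\hat M)\to 0$, which is exactly the required statement $C(f,n)\to 0$.

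The only real subtlety is that $M$ is itself random, so one cannot directly plug a deterministic bound on $\|M\|_*$ into Theorem~\ref{mainest}; the fix is to apply the theorem conditionally and use linearity of expectation together with the (linear, not minimum) bound $\|M\|_*/(n^{3/2}\sqrt{p})$, which plays nicely with Jensen's inequality. Everything else is standard: the approximation of a bounded measurable function by simple step functions is a classical density result, and the nuclear-norm bookkeeping $\|M\|_* \le \|M_\delta\|_* + \|R\|_*$ with the rank bound on $M_\delta$ is the same trick used implicitly in the blockmodel discussion of Section~\ref{blockmodel}.
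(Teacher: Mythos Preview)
Your argument is essentially the same as the paper's: approximate $f$ by a step function on a product partition, split $M$ into a low-rank piece plus a remainder with small expected Frobenius norm, bound $\ee\|M\|_*$, and feed this into Theorem~\ref{mainest} conditionally on $U_1,\ldots,U_n$. The paper uses dyadic partitions and the martingale convergence theorem to produce the approximant, while you invoke density of step functions directly, but this is a cosmetic difference.

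One small slip: your displayed equality $\ee\|R\|_F^2 = n^2\|f-f_\delta\|_{L^2}^2$ is not quite right, because the diagonal terms $i=j$ contribute $\int_0^1 (f(u,u)-f_\delta(u,u))^2\,du$ rather than $\|f-f_\delta\|_{L^2}^2$. The paper handles this by writing $\ee\|M-N\|_F^2 \le n + n^2\|f-f_k\|_{L^2}^2$, using only that the diagonal entries are bounded. This extra $O(n)$ term is harmless (it produces an additional $O(n^{-1/2})$ in the final bound), so your conclusion stands once you replace the equality by the correct inequality.
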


Incidentally, after the first version of this paper was put up on
arXiv, several papers (e.g.,~\cite{wolfe1,yang}) on graphon
estimation, advocating a number of different techniques and
demonstrating applications in the statistical study of networks, have
appeared in the literature.

\subsection{Nonparametric Bradley--Terry model}\label{bradley}
Suppose there are $n$ teams playing against each other in a tournament.
Every team plays against every other team at least once (often, exactly
once). Suppose that $p_{ij}$ is the probability that team $i$ wins
against team $j$ in a match between $i$ and $j$. Then $p_{ji}=1-p_{ij}$.

The Bradley--Terry model \cite{bt52}, originally proposed by Zermelo
\cite{zermelo29}, assumes that $p_{ij}$ is of the form $a_i/(a_i+a_j)$
for some unknown nonnegative numbers $a_1,\ldots, a_n$. It is known
how to estimate the parameters $a_1,\ldots, a_n$ if we assume that the
outcomes of all games are independent---which, in this case, is a
reasonable assumption.

The Bradley--Terry model has found great success among practitioners.
For an old survey of the literature on the model dating back to 1976,
see~\cite{df76}. Numerous extensions and applications have been
proposed, for example, \cite{ht98,agresti90,raokupper67,plackett75,luce59,luce77,huangetal06}. The monographs of David \cite{david88}
and Diaconis \cite{diaconis88}, Chapter~9,  explain the statistical
foundations of these models. More recently, several authors have
proposed to perform Bayesian inference for (generalized) Bradley--Terry
models \cite{adams05,gormleymurphy08,gormleymurphy09,goruretal06,guiversnelson09,carondoucet12}.

For the basic Bradley--Terry model, it is possible to find the maximum
likelihood estimate of the $a_i$'s using a simple iterative procedure
\cite{zermelo29,hunter04,langeetal00}. The maximum likelihood estimate
was shown to be jointly consistent for all $n$ parameters by Simons and
Yao \cite{simonsyao99}.


We now generalize the Bradley--Terry model as follows. Suppose, as
before, that $p_{ij}$ is the probability that team $i$ beats team $j$.
Suppose that the teams have a particular ordering in terms of strength
that is unknown to the observer. Assume that \textit{if team $i$ is
stronger than team $j$, then $p_{ik} \ge p_{jk}$ for all $k\ne i,j$.}
Do not assume anything else about the $p_{ij}$'s; in particular, do not
assume any formula for the $p_{ij}$'s in terms of hidden parameters.
This is what we may call a ``nonparametric Bradley--Terry model.'' Note
that the usual Bradley--Terry model is a special case of the
nonparametric version.

In the nonparametric Bradley--Terry model, is it possible to estimate
all the $p_{ij}$'s from a tournament where every team plays against
every other exactly once? Is it possible to estimate the $p_{ij}$'s if
only a randomly chosen fraction of the games are played? How small can
this fraction be, so that accurate estimation is still possible? The
following theorem provides some answers.

\begin{thm}\label{bradleythm}
Consider the nonparametric Bradley--Terry model defined above. Let $M$
be the matrix whose $(i,j)$th entry is $p_{ij}$ if $i\ne j$ and $0$ if
$i=j$. Let $X$ be the data matrix whose $(i,j)$th entry is $1$ if team
$i$ won over team $j$, $0$ if team $j$ won over team $i$ and recorded
as missing if team $i$ did not play versus team $j$. If team $i$ has
played against team $j$ multiple times, let the $(i,j)$th entry of $X$
be the proportion of times that $i$ won over $j$. (Draws are not
allowed.) Let all diagonal entries of $X$ be zero. Given $p\in[0,1]$,
suppose that for each $i$ and $j$, the game between $i$ and $j$ takes
place with probability $p$ and does not take place with probability
$1-p$, independent of other games. Let $\hat{M}$ be the estimate of $M$
based on the data matrix $X$. Then
\[
\operatorname{MSE}(\hat{M}) \le\frac{Cn^{-1/4}}{\sqrt{p}}, 
\]
where $C$ depends only on our choice of $\eta$. In particular, the
estimation problem is solvable whenever $p\gg n^{-1/2}$.
\end{thm}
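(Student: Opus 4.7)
The plan is to reduce to the skew-symmetric form of Theorem~\ref{mainest} and then to establish the nuclear norm bound $\|M\|_* \le Cn^{5/4}$ using the monotonicity hidden in the non-parametric Bradley--Terry hypothesis.

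First I would handle the reduction. The relations $p_{ij}+p_{ji}=1$ and $x_{ij}+x_{ji}=1$ (for every played pair) imply that, with $\tilde M := 2M-J$ and $\tilde X := 2X-J$ (here $J$ is the all-ones matrix), the entries lie in $[-1,1]$, the difference $\tilde X-\tilde M = 2(X-M)$ is skew-symmetric, and unplayed pairs $\{i,j\}$ correspond to simultaneously missing entries $(i,j)$ and $(j,i)$---matching the observation scheme of the skew-symmetric model exactly. By step~8 of the USVT algorithm $\mathrm{MSE}(\hat M) = \tfrac14\mathrm{MSE}(\hat{\tilde M})$; applying Theorem~\ref{mainest} (taking, say, $\varepsilon=1/2$) gives
\[
\mathrm{MSE}(\hat M) \;\le\; \frac{C\,\|\tilde M\|_*}{n\sqrt{np}} + C e^{-cnp}
\;\le\;\frac{C(\|M\|_*+n)}{n\sqrt{np}}+C e^{-cnp},
\]
since $\|\tilde M\|_*\le 2\|M\|_*+\|J\|_*=2\|M\|_*+n$. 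Thus it suffices to show $\|M\|_*\le Cn^{5/4}$: the additive $n$ is of lower order, and for $p\le n^{-1/2}$ the target bound $Cn^{-1/4}/\sqrt p$ exceeds $1$ so is vacuous (the MSE is always $\le 1$), while for $p\ge n^{-1/2}$ the exponential term $e^{-cnp}\le e^{-c\sqrt n}$ is negligible.

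The key step is the nuclear-norm bound via block approximation. Since $\|\cdot\|_*$ is invariant under simultaneous row/column permutations, I may assume teams are sorted in decreasing order of strength. The non-parametric hypothesis then gives $m_{ik}\ge m_{jk}$ whenever $i<j$ and $k\notin\{i,j\}$, and via $p_{ij}=1-p_{ji}$ one also obtains $m_{ki}\le m_{kj}$ whenever $i<j$ and $k\notin\{i,j\}$, so each row and each column of $M$ is monotone apart from its diagonal entry. Partition $\{1,\dots,n\}$ into $k$ consecutive blocks of size $n/k$ and let $\bar M$ be the $n\times n$ block-constant matrix whose value on the block $B_\alpha\times B_\beta$ is the average of $M$ there. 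Then $\mathrm{rank}(\bar M)\le k$, whence
\[
\|\bar M\|_* \;\le\; \sqrt k\,\|\bar M\|_F \;\le\; \sqrt k\,n.
\]
For the approximation error, on each off-diagonal block the oscillation $\omega_{\alpha\beta}$ is bounded by the sum of a horizontal and a vertical jump of $M$ across the block; these telescope along block-rows and block-columns (each monotone strip has total variation $\le 1$), giving $\sum_{\alpha\ne\beta}\omega_{\alpha\beta}\le 2k$. Since each $\omega_{\alpha\beta}\le 1$, it follows that $\sum\omega_{\alpha\beta}^2\le 2k$, and the $k$ diagonal blocks contribute at most $k$ more, so
\[
\|M-\bar M\|_F^2 \;\le\; (n/k)^2\cdot 3k \;=\; 3n^2/k,
\]
and hence $\|M-\bar M\|_*\le \sqrt n\,\|M-\bar M\|_F\le \sqrt 3\,n^{3/2}/\sqrt k$. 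The triangle inequality yields $\|M\|_*\le \sqrt k\,n + \sqrt 3\,n^{3/2}/\sqrt k$, minimized at $k\asymp\sqrt n$ and producing $\|M\|_*\le Cn^{5/4}$. Plugging back gives the advertised bound $\mathrm{MSE}(\hat M)\le Cn^{-1/4}/\sqrt p$.

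The main obstacle is the oscillation estimate. One has to carefully exploit the doubly-monotone structure to bound $\omega_{\alpha\beta}$ by horizontal plus vertical jumps that telescope across block-strips, and simultaneously to handle the $O(n)$ entries on or adjacent to the diagonal---where the Bradley--Terry hypothesis gives no comparison information---so that they contribute only a lower-order term to both the Frobenius error and the nuclear norm. Once the $O(n^{5/4})$ bound on $\|M\|_*$ is established, the remainder is a direct invocation of Theorem~\ref{mainest} in its skew-symmetric form.
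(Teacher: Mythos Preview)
Your proposal is correct and follows the same overall strategy as the paper: reduce via Theorem~\ref{mainest} to bounding $\|M\|_*$, then show $\|M\|_* \le Cn^{5/4}$ by approximating $M$ with a rank-$k$ block matrix $N$, obtaining $\|M-N\|_F^2 \le Cn^2/k$, and optimizing at $k \asymp \sqrt n$. The rescaling $\tilde M = 2M - J$ is unnecessary (the paper applies Theorem~\ref{mainest} directly, since $M$ and $X$ already have entries in $[-1,1]$ and $X-M$ is skew-symmetric), but it does no harm.

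The one substantive difference is in the block construction and the resulting Frobenius bound. You sort by strength, use equal-size consecutive blocks, and exploit the double monotonicity of $M$ (off the diagonal) to bound block oscillations via a two-dimensional telescoping argument. The paper instead partitions the teams by their \emph{total scores} $t_i := \sum_j p_{ij}$ into $k$ score-intervals of length $n/k$, picks a representative $r(l)$ from each group $T_l$, and sets $N$ to have row $i$ equal to row $r(l)$ of $M$ whenever $i \in T_l$. The key observation is then one-dimensional: if $i$ and $i' = r(l)$ lie in the same score-group and (say) $i'$ is weaker, then $p_{ij} - p_{i'j} \ge 0$ for all $j \ne i, i'$, so
\[
\sum_j (p_{ij} - p_{i'j})^2 \;\le\; \sum_j |p_{ij} - p_{i'j}| \;=\; (t_i - t_{i'}) + O(1) \;\le\; \frac{n}{k} + 2,
\]
giving $\|M-N\|_F^2 \le 3n^2/k$ directly, without the oscillation/telescoping machinery. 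Both routes yield the same bound; the paper's is a bit shorter and sidesteps the diagonal-block bookkeeping you correctly flag as the main obstacle in your approach (though your handling of it---trivially bounding the $k$ diagonal blocks by $\omega_{\alpha\alpha}\le 1$---is perfectly adequate).
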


A natural question is whether the threshold $p\gg n^{-1/2}$ is sharp. I
do not know the answer to this question. 

\section{Proofs}\label{proofs}
\subsection{Proof of Theorem~\texorpdfstring{\protect\ref{mainest}}{1.1} (Main result)}

We need to recall some background material before embarking on the
proof of Theorem~\ref{mainest}.

\subsubsection*{Matrix norms}
Let $A = (a_{ij})_{1\le i\le m,  1\le j\le n}$ be an $m\times n$ real
matrix with singular values $\sigma_1,\ldots, \sigma_k$, where $k= \min
\{m,n\}$. The following matrix norms are widely used in this proof.

The nuclear norm or the trace norm of $A$ is defined as
\[
\|A\|_* := \sum_{i=1}^k
\sigma_i.
\]
The Frobenius norm, also called the Hilbert--Schmidt norm, is defined as
\[
\|A\|_F := \Biggl(\sum_{i=1}^m
\sum_{j=1}^n a_{ij}^2
\Biggr)^{1/2} = \bigl(\tr \bigl(A^TA\bigr)
\bigr)^{1/2} = \Biggl(\sum_{i=1}^k
\sigma_i^2 \Biggr)^{1/2}.
\]
By the Cauchy--Schwarz inequality,
%
\begin{equation}
\label{csineq0}
\|A\|_*\le\sqrt{\rank(A)} \|A\|_F.
\end{equation}
%
The sup-norm is defined as
\[
\|A\|_\infty:= \max_{i,j} |a_{ij}|.
\]
The spectral norm or the operator norm of $A$ is defined as
\[
\|A\| := \max_{1\le i\le k} |\sigma_i|.
\]
The spectral norm may be alternatively expressed as
%
\begin{equation}
\label{maxrep}
\|A\| = \max_{x\in\mathbb{S}^{m-1},  y\in\mathbb{S}^{n-1}} x^TAy,
\end{equation}
where $\mathbb{S}^{m-1}$ and $\mathbb{S}^{n-1}$ are the Euclidean unit
spheres in $\rr^m$ and $\rr^n$, respectively. The above representation
implies that the spectral norm satisfies the triangle inequality.
Consequently, for any two $m\times n$ matrices $A$ and $B$,
\[
\bigl|\|A\| - \|B\| \bigr| \le\|A-B\|\le\|A-B\|_F.
\]
In particular, the spectral norm is a Lipschitz function of the matrix
entries (with Lipschitz constant $1$), if the entries are collectively
considered as a vector of length~$mn$.

The triangle inequality for the spectral norm also implies that the map
$A\mapsto\|A\|$ is convex. Indeed, for any $0\le t\le1$,
\[
\bigl\|tA + (1-t)B\bigr\|\le t\|A\|+(1-t)\|B\|.
\]
For more on matrix norms, see \cite{bhatia97}.

\subsubsection*{Perturbation of singular values}
The following perturbative result from matrix analysis is used several
times in this manuscript. Let $A$ and $B$ be two $m\times n$ matrices.
Let $k=\min\{m,n\}$. Let $\sigma_1,\ldots,\sigma_k$ be the singular
values of $A$ in decreasing order and repeated by multiplicities, and
let $\tau_1,\ldots, \tau_k$ be the singular values of $B$ in decreasing
order and repeated by multiplicities. Let $\delta_1,\ldots,\delta_k$ be
the singular values of $A-B$, in any order but still repeated by
multiplicities. 

\begin{thm}\label{wielandt}
For any $1\le p< \infty$,
\[
\sum_{i=1}^k |\sigma_i-
\tau_i|^p \le\sum_{i=1}^k
|\delta_i|^p
\]
and
\[
\max_{1\le i\le k} |\sigma_i - \tau_i|\le
\max_{1\le i\le k} |\delta_i|.
\]
\end{thm}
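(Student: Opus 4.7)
The plan is to reduce both inequalities to classical eigenvalue perturbation theorems for real symmetric matrices via the Jordan--Wielandt dilation. For any $m\times n$ matrix $C$, form the $(m+n)\times (m+n)$ real symmetric block matrix
\[
\tilde C := \begin{pmatrix} 0 & C \\ C^T & 0 \end{pmatrix}.
\]
Using the SVD $C = U\Sigma V^T$, one checks directly that the eigenvalues of $\tilde C$ are $\pm\sigma_1(C), \ldots, \pm\sigma_k(C)$ together with $|m-n|$ zeros. The dilation is linear, so $\tilde A - \tilde B = \widetilde{A-B}$, and the three singular value sequences $\sigma_i$, $\tau_i$, $\delta_i$ appear (up to sign) in the spectra of $\tilde A$, $\tilde B$, and $\tilde A - \tilde B$ respectively.

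For the max inequality, I would invoke Weyl's perturbation theorem for symmetric matrices: if the eigenvalues of $H_1, H_2$ are listed in decreasing order, then $|\lambda_i(H_1) - \lambda_i(H_2)| \le \|H_1 - H_2\|$ for each $i$. Applying this to $\tilde A$ and $\tilde B$ and restricting to the positive half of the spectrum immediately gives $|\sigma_i - \tau_i| \le \|\tilde A - \tilde B\| = \|A - B\| = \max_j \delta_j$ for every $i$, which is the desired bound.

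For the $p$-th power inequality I would invoke Mirsky's theorem (the singular value incarnation of the Lidskii--Wielandt majorization): for any two real symmetric matrices $H_1, H_2$ of the same size and any convex function $\phi:[0,\infty)\to[0,\infty)$ with $\phi(0)=0$,
\[
\sum_i \phi(|\lambda_i(H_1) - \lambda_i(H_2)|) \le \sum_i \phi(|\lambda_i(H_1 - H_2)|).
\]
This is a consequence of the Ky Fan weak majorization of the sorted absolute-value spectra, combined with Hardy--Littlewood--Polya. Taking $\phi(t) = t^p$ and applying the statement to $\tilde A, \tilde B$, the $\pm$ pairing of eigenvalues produces a common factor of $2$ on both sides, which cancels, leaving exactly $\sum_i |\sigma_i - \tau_i|^p \le \sum_i \delta_i^p$.

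The main obstacle is Mirsky's theorem itself, which is a nontrivial piece of matrix analysis; for the purposes of this paper I would simply cite it from the standard reference, e.g.\ Bhatia~\cite{bhatia97}. Once that is taken as a black box, the reduction via the Jordan--Wielandt dilation is a short unpacking and both statements follow without further calculation.
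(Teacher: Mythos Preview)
Your proposal is correct, and it follows essentially the same route that the paper points to: the paper does not give its own proof but cites Bhatia~\cite{bhatia97}, specifically the combination of Theorem~III.4.4 (Lidskii-type majorization) and Exercise~II.1.15 (the Jordan--Wielandt dilation reducing singular values to eigenvalues), together with Weyl's perturbation theorem (Corollary~III.2.6). Your sketch unpacks exactly this combination, so there is nothing substantively different to compare.
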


The above result follows, for example, from a combination of\break Theorem~III.4.4 and Exercise~II.1.15 in \cite{bhatia97}. It may also be derived
as a consequence of Wielandt's minimax principle \cite{bhatia97}, Section III.3, or Lidskii's theorem
\cite{bhatia97}, Exercise III.4.3. The case $p=2$ is sometimes called the
Hoffman--Wielandt theorem \cite{agz}, Lemma~2.1.19 and Remark~2.1.20,
and the inequality involving the maximum is sometimes called Weyl's
perturbation theorem \cite{bhatia97}, Corollary III.2.6.

\subsubsection*{Bernstein's inequality}
The following inequality is known as ``Bernstein's inequality.''

\begin{thm}\label{bern}
Suppose that $X_1,\ldots, X_n$ are independent random variables with
zero mean, and $M$ is a constant such that $|X_i|\le M$ with
probability one for each $i$. Let $S:=\sum_{i=1}^n X_i$ and $v:= \var
(S)$. Then for any $t\ge0$,
\[
\pp\bigl(|S|\ge t\bigr) \le2\exp \biggl(-\frac{3t^2}{6v +2Mt} \biggr).
\]
\end{thm}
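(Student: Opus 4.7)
My plan is to prove Bernstein's inequality by the standard Chernoff/exponential‑Markov route, controlling the moment generating function of each $X_i$ by Taylor expansion and the boundedness hypothesis, and then optimizing the resulting tilt parameter.

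First I would fix $\lambda > 0$ and write $\pp(S \ge t) \le e^{-\lambda t}\ee[e^{\lambda S}] = e^{-\lambda t}\prod_{i=1}^n \ee[e^{\lambda X_i}]$, where the product uses independence. For each factor, I expand $e^{\lambda X_i} = 1 + \lambda X_i + \sum_{k\ge 2} \lambda^k X_i^k / k!$ and take expectations; the linear term vanishes because $\ee[X_i] = 0$. For $k \ge 2$ the bound $|X_i|\le M$ gives $|X_i^k| \le M^{k-2} X_i^2$, so $\ee[X_i^k] \le M^{k-2}\var(X_i)$. The key numerical observation I will invoke is $k! \ge 2\cdot 3^{k-2}$ for all $k\ge 2$ (check the first couple of values and induct), which bounds the tail of the series by a geometric series: for $\lambda M < 3$,
\[
\sum_{k\ge 2} \frac{\lambda^k M^{k-2}\var(X_i)}{k!} \;\le\; \frac{\lambda^2 \var(X_i)}{2}\sum_{k\ge 2}(\lambda M/3)^{k-2} \;=\; \frac{\lambda^2 \var(X_i)}{2(1-\lambda M/3)}.
\]
Combined with $1+x \le e^x$, this yields $\ee[e^{\lambda X_i}] \le \exp\bigl(\lambda^2\var(X_i)/(2(1-\lambda M/3))\bigr)$, and multiplying over $i$ gives $\ee[e^{\lambda S}] \le \exp\bigl(\lambda^2 v/(2(1-\lambda M/3))\bigr)$ since $v=\sum_i \var(X_i)$.

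Next I would optimize the exponent $-\lambda t + \lambda^2 v/(2(1-\lambda M/3))$ in $\lambda$. The minimizer turns out to be $\lambda^* = t/(v + Mt/3)$, which automatically satisfies $\lambda^* M < 3$. A short calculation then collapses the two terms and produces the exponent $-t^2/(2(v+Mt/3)) = -3t^2/(6v+2Mt)$, i.e.
\[
\pp(S\ge t) \;\le\; \exp\!\Bigl(-\frac{3t^2}{6v+2Mt}\Bigr).
\]
Applying the identical argument to $-S$ (its summands $-X_i$ still satisfy the hypotheses with the same $M$ and the same $v$) bounds $\pp(S\le -t)$ by the same quantity, and a union bound gives the claimed factor of $2$.

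The main obstacle is purely the little combinatorial/analytic bound $k!\ge 2\cdot 3^{k-2}$, which is what forces the specific constants $6$ and $2$ in the denominator $6v+2Mt$; a weaker bound on $k!$ would still give a Bernstein‑type inequality but with worse numerical constants. Everything else — exponential Markov, independence, $1+x\le e^x$, and the single‑variable optimization in $\lambda$ — is routine.
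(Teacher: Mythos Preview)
Your argument is correct and is the standard exponential-moment derivation of Bernstein's inequality. One small imprecision: the value $\lambda^* = t/(v+Mt/3)$ is not literally the minimizer of $-\lambda t + \lambda^2 v/(2(1-\lambda M/3))$ (differentiate and you will see the critical equation does not hold unless $Mt=0$); however, since the Chernoff bound is valid for \emph{any} $\lambda\in(0,3/M)$, plugging in this particular $\lambda^*$ is enough, and your computation of the resulting exponent $-3t^2/(6v+2Mt)$ is correct. Also note the degenerate cases $t=0$ and $v=0$ are trivial, so the restriction $\lambda^* M<3$ (which needs $v>0$) is harmless.

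As for comparison with the paper: the paper does not actually prove Theorem~\ref{bern}. It merely states the inequality and cites Bernstein~\cite{bernstein} and Bennett~\cite{bennett62} for the proof. So there is nothing to compare against; you have supplied the classical proof that the paper chose to omit.
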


This inequality was proved by Bernstein \cite{bernstein}. For a
discussion of Bernstein's inequality and improvements, see Bennett~\cite{bennett62}.

\subsubsection*{Talagrand's concentration inequality}
Recall that a median $m$ of a random variable $Y$ is a real number such
that $\pp(Y\le m) \ge1/2$ and $\pp(Y\ge m)\ge1/2$. The median may not
be unique.

The following concentration inequality is one of the several striking
inequalities that are collectively known as ``Talagrand's concentration
inequalities.''

\begin{thm}\label{tala}
Suppose that $f\dvtx  [-1,1]^n \ra\rr$ is a convex Lipschitz function with
Lipschitz constant $L$. Let $X_1,\ldots,X_n$ be independent random
variables taking value in $[-1,1]$. Let $Y:= f(X_1,\ldots, X_n)$ and
let $m$ be a median of~$Y$. Then for any $t\ge0$,
\[
\pp\bigl(|Y-m|\ge t\bigr) \le4e^{-t^2/16L^2}.
\]
\end{thm}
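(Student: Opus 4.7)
The plan is to derive the bound from Talagrand's convex distance inequality for product measures. Specifically, I would invoke the following Euclidean form: if $U_1,\ldots,U_n$ are independent random variables taking values in $[0,1]$ and $A\subseteq [0,1]^n$ is convex and measurable, then for every $s\ge 0$,
\[
\pp(U\in A)\,\pp(d(U,A)\ge s) \le e^{-s^2/4},
\]
where $d$ is Euclidean distance. This is the classical Talagrand inequality; standard proofs proceed by induction on $n$, isolating one coordinate and combining a one-dimensional exponential-moment bound with H\"older's inequality, crucially using that the coordinate section of a convex set is convex.

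First, I would rescale to the $[0,1]^n$ setting. Let $U_i := (X_i+1)/2\in[0,1]$ and $g(u):=f(2u-1)$. Then $g$ is convex on $[0,1]^n$, its Lipschitz constant is $2L$, and $Y=g(U)$, so any tail bound for $g(U)-m$ immediately transfers to $Y-m$.

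For the upper tail, take $A:=\{u\in[0,1]^n : g(u)\le m\}$, which is convex because $g$ is. Since $m$ is a median of $Y$, $\pp(U\in A)\ge 1/2$. If $g(u)>m+t$, then for every $v\in A$ the Lipschitz property gives $\|u-v\|\ge (g(u)-g(v))/(2L)\ge t/(2L)$, hence $d(u,A)\ge t/(2L)$. Setting $s=t/(2L)$ in the convex-distance inequality yields
\[
\pp(Y-m\ge t) \le \pp(d(U,A)\ge t/(2L)) \le \frac{e^{-t^2/(16L^2)}}{\pp(U\in A)} \le 2e^{-t^2/(16L^2)}.
\]
For the lower tail, I would instead use $B:=\{u : g(u)\le m-t\}$, still convex. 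If $g(u)\ge m$ then $d(u,B)\ge t/(2L)$, so $\pp(g(U)\ge m)\le \pp(d(U,B)\ge t/(2L))$. Combining with $\pp(g(U)\ge m)\ge 1/2$ and the convex-distance inequality applied to $B$,
\[
\tfrac{1}{2}\,\pp(U\in B) \le \pp(U\in B)\,\pp(d(U,B)\ge t/(2L)) \le e^{-t^2/(16L^2)},
\]
which gives $\pp(Y\le m-t)\le 2e^{-t^2/(16L^2)}$. Summing the two tail bounds produces the factor $4$ in the statement.

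The routine portion of the argument is the reduction and the two geometric observations about sublevel sets; everything hinges on the convex-distance inequality itself. If a self-contained proof were required, the main obstacle would be establishing that inequality, where the delicate point is the inductive step: one fixes the last coordinate, shows that the sliced set remains convex, and bounds a conditional exponential moment using the one-dimensional estimate $\ee[e^{\lambda(U-\ee U)^2}]$ for $U\in[0,1]$, then stitches the slices together by H\"older. The constant $1/4$ in the exponent, after the $[-1,1]\to[0,1]$ rescaling contributes a factor of $4$, is precisely what produces the $1/(16L^2)$ in the stated bound.
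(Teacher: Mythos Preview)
The paper does not actually prove this theorem; it simply cites \cite[Theorem 6.6]{talagrand96} and moves on. Your sketch is the standard derivation of the concentration bound from Talagrand's convex distance inequality, and it is correct: the rescaling to $[0,1]^n$ doubles the Lipschitz constant, the two sublevel sets $\{g\le m\}$ and $\{g\le m-t\}$ are convex because $g$ is, and the geometric containments $\{g\ge m+t\}\subseteq\{d(\cdot,A)\ge t/(2L)\}$ and $\{g\ge m\}\subseteq\{d(\cdot,B)\ge t/(2L)\}$ follow from the Lipschitz property exactly as you wrote. The constants track correctly to give $4e^{-t^2/16L^2}$.

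The only caveat is that you are invoking the Euclidean-distance form of Talagrand's inequality for convex sets as a black box, which is precisely the nontrivial content; your closing paragraph acknowledges this. So your write-up is more informative than the paper's one-line citation, but ultimately rests on the same external result.
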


For a proof of Theorem~\ref{tala}, see \cite{talagrand96}, Theorem~6.6.

It is easy to modify Theorem~\ref{tala} to have concentration around
the mean instead of the median. Just observe that by Theorem~\ref{tala}, $\ee(Y-m)^2 \le64 L^2$.
Since $\ee(Y-m)^2\ge\var(Y)$, this
shows that $\var(Y)\le64L^2$. Thus, by Chebychev's inequality,
\[
\pp\bigl(\bigl|Y-\ee(Y)\bigr|\ge16L\bigr)\le\tfrac{1}{4}.
\]
By the definition of a median, this shows that $\ee(Y)-16L\le m \le\ee
(Y)+16L$. Together with Theorem~\ref{tala}, this implies that for any
$t\ge0$,
%
\begin{equation}
\label{tala2}
\pp\bigl(\bigl|Y-\ee(Y)\bigr|\ge16L + t\bigr) \le 4e^{-t^2/2L^2}.
\end{equation}
The above inequality has a number of uses in the proof of Theorem~\ref{mainest}.

\subsubsection*{Spectral norms of random matrices}
The following bound on spectral norms of random matrices is a crucial
ingredient for this paper. The proof follows from a combinatorial
argument of Vu \cite{vu07} (which is itself a refinement of a classical
argument of F\"uredi and Koml\'os \cite{furedikomlos81}), together with
Talagrand's inequality \eqref{tala2}.

\begin{thm}\label{normthm}
Take any two numbers $m$ and $n$ such that $1\le m\le n$. Suppose that
$A = (a_{ij})_{1\le i\le m,  1\le j\le n}$ is a matrix whose entries
are independent random variables that satisfy, for some $\sigma^2\in[0,1]$,
\[
\ee(a_{ij})=0, \qquad \ee\bigl(a_{ij}^2\bigr)\le
\sigma^2 \quad \mbox{and}\quad  |a_{ij}|\le 1 \qquad a.s.
\]
Suppose that $\sigma^2 \ge n^{-1+\ep}$ for some $\ep> 0$. Then for any
$\eta\in(0,1)$,
\[
\pp\bigl(\|A\|\ge(2+\eta)\sigma\sqrt{n}\bigr)\le C_1(
\ep)e^{-C_2\sigma^2n},
\]
where $C_1(\ep)$ depends only on $\ep$ and $\eta$ and $C_2$ depends
only on $\eta$. The same result is true when $m=n$ and $A$ is symmetric
or skew-symmetric, with independent entries on and above the diagonal,
all other assumptions remaining the same. Lastly, all results remain
true if the assumption $\sigma^2 \ge n^{-1+\ep}$ is changed to $\sigma
^2 \ge n^{-1}(\log n)^{6+\ep}$.
\end{thm}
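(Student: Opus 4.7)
The plan is to combine a trace-method bound on $\mathbb{E}\|A\|$ with Talagrand's concentration inequality~\eqref{tala2}. The spectral norm $A \mapsto \|A\|$ is convex (by the representation~\eqref{maxrep}) and, since $\bigl|\|A\| - \|B\|\bigr| \le \|A-B\|_F$, Lipschitz with constant $L = 1$ in the asymmetric case and $L = \sqrt 2$ in the (skew-)symmetric cases (a change in one upper-triangular entry forces a mirror change below the diagonal). The entries lie in $[-1,1]$, so \eqref{tala2} gives
\begin{equation*}
\mathbb{P}\bigl(\|A\| \ge \mathbb{E}\|A\| + 16L + t\bigr) \le 4\exp\bigl(-t^2/(2L^2)\bigr).
\end{equation*}
If we can show $\mathbb{E}\|A\| \le (2+\eta/2)\sigma\sqrt n$ for all sufficiently large $n$, then taking $t = c_\eta \sigma\sqrt n$ with $c_\eta$ a small constant depending on $\eta$ gives a right-hand side of the form $C_1 e^{-C_2 \sigma^2 n}$, while the additive $16L$ is absorbed because $\sigma^2 n \ge n^\epsilon \to \infty$. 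This yields the claimed bound, with constants $C_1(\ep), C_2$ of the advertised type.

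The heart of the argument is the expectation bound, for which I would follow Vu's combinatorial refinement of the F\"uredi--Koml\'os trace method. Starting from
\begin{equation*}
\mathbb{E}\|A\|^{2k} \le \mathbb{E}\,\text{tr}\bigl((AA^T)^k\bigr) = \sum_{\gamma} \mathbb{E}\Bigl[\prod_{e\in\gamma} a_e\Bigr],
\end{equation*}
where $\gamma$ ranges over closed length-$2k$ walks in the complete bipartite graph $K_{m,n}$, independence together with $\mathbb{E}(a_{ij})=0$ kills any walk containing an edge of multiplicity one; only walks with every edge of multiplicity $\ge 2$ contribute. The ``canonical'' even-pairing walks (every edge of multiplicity exactly $2$, underlying graph a tree with $k+1$ vertices) produce the leading term $(2\sigma\sqrt n)^{2k}$ via $\mathbb{E}(a_{ij}^2)\le\sigma^2$, and Vu's sharper enumeration of the remaining walks --- classified by the tree deficit $v$ and by edge-multiplicity patterns, and using $|a_{ij}|\le 1$ to control higher moments --- controls the error. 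Taking $2k$-th roots and optimizing $k$ as a small power of $\log n$ yields
\begin{equation*}
\mathbb{E}\|A\| \le 2\sigma\sqrt n + O\bigl(\sigma^{1/2} n^{1/4}(\log n)^{3}\bigr),
\end{equation*}
which is $o(\sigma\sqrt n)$ under the hypothesis $\sigma^2 \ge n^{-1}(\log n)^{6+\epsilon}$ (and a fortiori under $\sigma^2 \ge n^{-1+\epsilon}$), giving $\mathbb{E}\|A\| \le (2+\eta/2)\sigma\sqrt n$ for large $n$.

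The symmetric case is identical after replacing $\text{tr}((AA^T)^k)$ by $\text{tr}(A^{2k})$ and walks in $K_{m,n}$ by walks in $K_n$; the skew-symmetric case uses $AA^T = -A^2$ (so $\|A\|^2 = \|A^2\|$) and reduces to the same closed-walk count after tracking a global sign. The main technical obstacle is the combinatorial bookkeeping in Vu's enumeration --- specifically, bounding the contribution of non-canonical walks (those with higher edge multiplicities or smaller skeleton trees) tightly enough to preserve the sharp leading constant $2$, rather than settling for an unspecified $O(\sigma\sqrt n)$ bound; this is precisely what forces the logarithmic refinement in $\sigma^2 \ge n^{-1}(\log n)^{6+\epsilon}$. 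Once that input is in hand, the Talagrand step is routine.
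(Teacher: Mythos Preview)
Your proposal is correct and follows the same route as the paper: Vu's trace-method refinement of F\"uredi--Koml\'os to bound $\mathbb{E}\|A\|$ close to $2\sigma\sqrt{n}$, followed by Talagrand's convex-Lipschitz concentration inequality~\eqref{tala2}. The paper differs only in minor execution details --- it treats the symmetric case first (via $\tr(A^k)$) and reduces the rectangular case to the square case by zero-padding rather than bipartite walks, and it takes $k$ to be the largest even integer $\le \sigma^{1/3}n^{1/6}$ (yielding $\mathbb{E}\|A\|\le (2n)^{1/k}\cdot 2\sigma\sqrt{n}$) rather than a power of $\log n$ as you suggest.
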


\begin{pf}
First assume that $m=n$ and $A$ is symmetric. Note that for any even
number $k$,
%
\begin{equation}
\label{vu1}
\ee\|A\|^{k} \le\ee\bigl(\tr\bigl(A^{k}
\bigr)\bigr) = \sum_{1\le i_1,\ldots, i_{k}\le n} \ee(a_{i_1i_2}a_{i_2i_3}
\cdots a_{i_{k-1}i_{k}} a_{i_{k} i_1}).
\end{equation}
Consider $i_1,i_2,\ldots, i_{k-1},i_k, i_1$ as a closed tour of a
subset of the vertices of the complete graph on $n$ vertices (with
self-edges included). From the given assumptions about the $a_{ij}$'s,
it follows that the term $\ee(a_{i_1i_2}a_{i_2i_3}\cdots a_{i_ki_1})$
is zero if there is an edge that is traversed exactly once. Suppose
that each edge in the tour is traversed at least twice. Let $p$ be the
number of distinct vertices visited by the tour. Then the number of
distinct edges traversed by the tour is at least $p-1$. Since $\sigma^2
\le1$, $|a_{ij}|\le1$, and $\ee|a_{ij}|^l\le\sigma^2$ for any $l\ge
2$, this shows that
%
\begin{equation}
\label{vu2}
\bigl|\ee(a_{i_1i_2}a_{i_2i_3}\cdots a_{i_ki_1})\bigr| \le
\sigma^{2p-2}.
\end{equation}
%
Thus, if $W(n,k,p)$ is the number of tours of length $k$ that visit
exactly $p$ vertices and traverse each of its edges at least twice, then
%
\begin{equation}
\label{vu3}
\ee\|A\|^k \le\sum_{p=1}^{k}
\sigma^{2p-2} W(n,k,p).
\end{equation}
Vu \cite{vu07}, equation (5),  proves that if $p> k/2$ then $W(n,k,p)=0$
and if $p\le k/2$ then
\[
W(n,k,p)\le n(n-1)\cdots(n-p+1)\pmatrix{k \cr 2p-2} p^{2(k-2p+2)}2^{2p-2}.
\]
Using this bound, one can proceed as in \cite{vu07}, Section~2,  to
arrive at the conclusion that if $k$ is largest even number $\le\sigma
^{1/3}n^{1/6}$, then
\[
\ee\|A\|^k\le2n(2\sigma\sqrt{n})^k.
\]
Consequently,
\[
\ee\|A\|\le \bigl(\ee\|A\|^k \bigr)^{1/k}
\le(2n)^{1/k} 2\sigma\sqrt{n}.
\]
This shows that if $\sigma^2 \ge n^{-1+\ep}$ [or if $\sigma^2 \ge
n^{-1}(\log n)^{6+\ep}$], then there is a constant $C(\ep)$ depending
only on $\ep$ and $\eta$ such that if $n\ge C(\ep)$ then
%
\begin{equation}
\label{eabd}
\ee\|A\|\le(2+\eta/4)\sigma\sqrt{n}.
\end{equation}
Since $a_{ij}$ are independent and $|a_{ij}|\le1$ almost surely for
all $i,j$, and the spectral norm is a convex Lipschitz function of
matrix entries with Lipschitz constant $1$ (by the discussion about
matrix norms at the beginning of this section), therefore one can apply
Talagrand's inequality [Theorem~\ref{tala} and inequality \eqref{tala2}] together with~\eqref{eabd} and the assumption that $\sigma^2
\ge n^{-1+\ep}$ to conclude that there is a constant $C(\ep)$ such that
if $n \ge C(\ep)$ then
%
\begin{equation}
\label{pac1c2}
\pp\bigl(\|A\|\ge(2+\eta/2)\sigma\sqrt{n}\bigr)\le
C_1e^{-C_2\sigma^2n},
\end{equation}
where $C_1$ and $C_2$ depend only on $\eta$. Replacing $C_1$ by a large
enough constant $C_1(\ep)$, the condition $n\ge C(\ep)$ may be dropped.
It is clear from the argument that it goes through in the
skew-symmetric case as well.

Let us now drop the assumption of symmetry, but retain the assumption
that $m=n$. Let $a_{ij}':= a_{ji}$. Then inequality \eqref{vu1} must be
modified to say that for any even $k$,
\begin{eqnarray*}
\ee\|A\|^k &\le & \ee \bigl(\tr\bigl(\bigl(A^TA
\bigr)^{k/2}\bigr) \bigr)
\\
&=&  \sum_{1\le i_1,\ldots, i_{k}\le n}\ee \bigl(a_{i_1i_2}'a_{i_2i_3}a_{i_3i_4}'a_{i_4i_5}
\cdots a_{i_{k-1}i_{k}}' a_{i_{k} i_1}\bigr).
\end{eqnarray*}
As before, the term inside the sum is zero for any tour that traverses
an edge exactly once. (In fact, there are more terms that are zero now;
a term may be zero even if a tour traverses all of its edges at least
twice.) Similarly, inequalities \eqref{vu2} and \eqref{vu3} continue to
hold and, therefore, so does the rest of the argument.

Lastly, consider the case $m< n$. Augment the matrix $A$ by adding an
extra $n-m$ rows of zeros to make it an $n\times n$ matrix that
satisfies all the conditions of the theorem. Clearly, the new matrix
has the same spectral norm as the old one. This completes the proof.
\end{pf}

\subsubsection*{The key lemma}
Suppose that $A$ and $B$ are two $m\times n$ matrices, where $m\le n$.
Let $a_{ij}$ be the $(i,j)$th entry of $A$ and $b_{ij}$ be the
$(i,j)$th entry of $B$. It is easy to see from definition that
\[
\frac{1}{mn}\sum_{i=1}^m \sum
_{j=1}^n (a_{ij}-b_{ij})^2
= \frac{1}{mn}\| A-B\|^2_F \le\frac{1}{n}
\|A-B\|^2.
\]
Thus, if $\|A-B\|$ is small enough, then the entries of $A$ are
approximately equal to the entries of $B$, on average. In other words,
the matrix $A$ is an estimate of the matrix $B$.

The goal of this section is to show that if in addition to the
smallness of $\|A-B\|$, we also know that the nuclear norm $\|B\|_*$ is
not too large, it is possible to get a better estimate of $B$ based on $A$.

\begin{lmm}\label{estlmm}
Let $A=\sum_{i=1}^m \sigma_i x_i y_i^T$ be the singular value
decomposition of~$A$. Fix any $\delta> 0$ and define
\[
\hat{B} := \sum_{i  \dvtx   \sigma_i > (1+\delta)\|A-B\|} \sigma_i
x_i y_i^T.
\]
Then
\[
\|\hat{B}-B\|_F\le K(\delta) \bigl(\|A-B\|\|B\|_* \bigr)^{1/2},
\]
where $K(\delta)= (4+2\delta)\sqrt{2/\delta} + \sqrt{2+\delta}$.
\end{lmm}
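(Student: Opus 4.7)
The plan is to decompose $\hat B - B$ via an intermediate best rank-$r$ approximation of $B$, where $r := |S|$ is the number of retained singular values of $A$. Writing $\tau_1\ge\tau_2\ge\cdots$ for the singular values of $B$ and $B_r$ for the best rank-$r$ approximation of $B$ (obtained by truncating its SVD), the triangle inequality gives
\[
\|\hat B - B\|_F \le \|\hat B - B_r\|_F + \|B_r - B\|_F,
\]
and I would bound the two pieces separately. Throughout, the key perturbative tool is Weyl's bound $|\sigma_i - \tau_i|\le \|A-B\|$ from Theorem \ref{wielandt}.

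First I would control the number of retained singular values. Any $i\in S$ satisfies $\sigma_i > (1+\delta)\|A-B\|$, so Weyl gives $\tau_i > \delta\|A-B\|$. Summing over $i\le r$ yields $r\,\delta\|A-B\| < \|B\|_*$, hence $r < \|B\|_*/(\delta\|A-B\|)$. This is the only place the hypothesis on $\|B\|_*$ enters directly.

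For the first piece, $\hat B$ and $B_r$ both have rank at most $r$, so $\hat B - B_r$ has rank at most $2r$, giving $\|\hat B - B_r\|_F \le \sqrt{2r}\,\|\hat B - B_r\|$ by \eqref{csineq0}. Now expand $\hat B - B_r = (\hat B - A) + (A-B) + (B - B_r)$ in spectral norm. The first summand has norm $\sigma_{r+1}\le (1+\delta)\|A-B\|$ (since $r+1\notin S$); the middle is $\|A-B\|$; and the last is $\tau_{r+1}$, which by Weyl satisfies $\tau_{r+1}\le \sigma_{r+1} + \|A-B\| \le (2+\delta)\|A-B\|$. Summing gives $\|\hat B - B_r\| \le (4+2\delta)\|A-B\|$. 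Combining with the bound on $r$ from the previous step,
\[
\|\hat B - B_r\|_F \le (4+2\delta)\|A-B\|\sqrt{\tfrac{2\|B\|_*}{\delta\|A-B\|}} = (4+2\delta)\sqrt{\tfrac{2}{\delta}}\,\bigl(\|A-B\|\,\|B\|_*\bigr)^{1/2}.
\]

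For the second piece, $\|B_r - B\|_F^2 = \sum_{i>r}\tau_i^2$. Since $\tau_i\le \tau_{r+1}\le (2+\delta)\|A-B\|$ for $i>r$, we can pull out one factor:
\[
\sum_{i>r}\tau_i^2 \le (2+\delta)\|A-B\|\sum_{i>r}\tau_i \le (2+\delta)\|A-B\|\,\|B\|_*,
\]
so $\|B_r - B\|_F \le \sqrt{2+\delta}\,(\|A-B\|\,\|B\|_*)^{1/2}$. Adding the two bounds yields exactly $K(\delta)\,(\|A-B\|\,\|B\|_*)^{1/2}$. There is no real obstacle; the only subtle point is extracting the $\sqrt{2r}$ factor cleanly, which requires recognizing that $\hat B - B_r$ has rank $\le 2r$ and that the best rank-$r$ approximation $B_r$ is the right intermediate object to bridge $\hat B$ and $B$.
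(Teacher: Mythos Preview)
Your proof is correct and essentially identical to the paper's. The paper's intermediate matrix $G := \sum_{i\in S}\tau_i u_i v_i^T$ is precisely your $B_r$ (since $S=\{1,\dots,r\}$ once the $\sigma_i$ are ordered), and the paper bounds $\|\hat B - G\|$, $\|\hat B - G\|_F$, $\|B-G\|_F$, and $|S|$ by exactly the same chain of Weyl-based inequalities you use.
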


\begin{pf}
Let $B = \sum_{i=1}^m \tau_i u_i v_i^T$ be the singular value
decomposition of $B$. Without loss of generality, assume that $\sigma
_i$'s and $\tau_i$'s are arranged in decreasing order. Let $S$ be the
set of $i$ such that $\sigma_i > (1+\delta)\|A-B\|$. Define
\[
G := \sum_{i\in S} \tau_i u_i
v_i^T.
\]
Note that by the definition of $\hat{B}$, the largest singular value of
$A-\hat{B}$ is bounded above by $(1+\delta)\|A-B\|$. In other words,
%
\begin{equation}
\label{ab1}
\|A-\hat{B}\|\le(1+\delta)\|A-B\|.
\end{equation}
On the other hand, by Theorem~\ref{wielandt},
\[
\max_{1\le i\le m}|\sigma_i -\tau_i|\le\|A-B
\|.
\]
In particular, for $i\notin S$,
%
\begin{equation}
\label{tauineq}
\tau_i \le\sigma_i + \|A-B\| \le(2+
\delta)\|A-B\|,
\end{equation}
and for $i\in S$,
%
\begin{equation}
\label{tau2}
\tau_i \ge\sigma_i - \|A-B\|\ge\delta
\|A-B\|.
\end{equation}
By \eqref{tauineq},
%
\begin{equation}
\label{ab2}
\|B-G\|\le(2+\delta)\|A-B\|.
\end{equation}
By \eqref{ab1} and \eqref{ab2}, we have
%
\begin{equation}
\label{ab3}
\|\hat{B}-G\|  \le\|\hat{B}-A\|+ \|A-B\|+\|B-G\|\le(4+2\delta)
\|A-B\|.
\end{equation}
Since $\hat{B}$ and $G$ both have rank $\le|S|$, the difference $\hat
{B}-G$ has rank at most~$2|S|$. Using this and \eqref{ab3}, we have
%
\begin{equation}
\label{ab4}
\|\hat{B}-G\|_F \le\sqrt{2|S|} \|\hat{B}-G\| \le(4+2
\delta)\sqrt {2|S|} \|A-B\|.
\end{equation}
Next, observe that by \eqref{tauineq},
%
\begin{equation}
\label{ab5}
\hspace*{9pt}\|B-G\|_F^2 = \sum
_{i\notin S} \tau_i^2 \le(2+\delta)\|A-B\|
\sum_{i\notin S} \tau_i \le(2+\delta)\|A-B\|\|B
\|_*.
\end{equation}
Combining \eqref{ab4} and \eqref{ab5}, we have
%
\begin{eqnarray}
\|\hat{B}-B\|_F  & \le & \|\hat{B}-G\|_F + \|B-G
\|_F
\nonumber
\\[-8pt]
\label{abmain}\\[-8pt]
\nonumber
&\le & (4+2\delta)\sqrt{2|S|} \|A-B\| + \bigl((2+\delta)\|A-B\|\|B\| _*
\bigr)^{1/2}.
\end{eqnarray}
Next,\vspace*{-2pt} note that by \eqref{tau2},
\[
\|B\|_* \ge\sum_{i\in S} \tau_i \ge
\delta|S|\|A-B\|,
\]
and thus\vspace*{-2pt}
%
\begin{equation}
\label{sineq}
|S|\le\frac{\|B\|_*}{\delta\|A-B\|}.
\end{equation}
Combining \eqref{abmain} and \eqref{sineq}, the proof is complete.
\end{pf}

\subsubsection*{Finishing the proof of Theorem~\texorpdfstring{\protect\ref{mainest}}{1.1}}
We will prove the theorem only for the asymmetric model. The only
difference in the proofs for the symmetric model and the skew-symmetric
model is that we need to use the symmetric and skew-symmetric parts of
Theorem~\ref{normthm} instead of the asymmetric part.

Throughout this proof, $C(\ep)$ will denote any constant that depends
only on $\ep$ and $\eta$, and $C$ and $c$ will denote constants that
depend only on $\eta$. The values of $C(\ep)$, $C$ and $c$ may change
from line to line or even within a line. We will use the fact that $\eta
\in(0,1)$ without mention on many occasions.

Note that for\vspace*{-2pt} all $i$ and $j$,
\[
\ee(y_{ij})= pm_{ij}
\]
and\vspace*{-2pt}
%
\begin{equation}
\label{varyij}
\var(y_{ij}) \le\ee\bigl(y_{ij}^2
\bigr) = p\ee\bigl(x_{ij}^2\bigr)\le p.
\end{equation}
Let $\hp$ be the proportion of\vspace*{-2pt} observed entries. 
Define two events $E_1$ and $E_2$ as
\begin{eqnarray*}
E_1 &:= &\bigl\{\|Y-pM\|\le(2+\eta/2)\sqrt{np}\bigr\},
\\
E_2 &:=& \bigl\{|\hp-p|\le\eta p/20\bigr\}.
\end{eqnarray*}
By\vspace*{-2pt} Theorem~\ref{normthm},
%
\begin{equation}
\label{pe1}
\pp(E_1)\ge1- C(\ep)e^{- cnp}.
\end{equation}
By Bernstein's inequality (Theorem~\ref{bern}), for any $t\ge0$,
\[
\pp\bigl(|\hp-p|\ge t\bigr) \le2\exp \biggl(-\frac{3mn t^2}{6p(1-p) + 2t} \biggr).
\]
In particular,
%
\begin{equation}
\label{pe2}
\pp(E_2)\ge1- 2e^{-cmnp}.
\end{equation}
Let $\delta$ be defined by the relation
\[
(1+\delta)\|Y-pM\| = (2+\eta) \sqrt{n\hp}.
\]
If $E_1$ and $E_2$ both happen, then
\[
1+\delta\ge\frac{(2+\eta)\sqrt{n\hp}}{(2+\eta/2)\sqrt{np}}\ge\frac
{(2+\eta)\sqrt{(1-\eta/20)np}}{(2+\eta/2)\sqrt{np}}\ge1 + \eta/5.
\]
Let $K(\delta)$ be the constant in the statement of Lemma~\ref{estlmm}.
It is easy to see that there is a constant $C$ depending only on $\eta$
such that if $\delta\ge\eta/5$, then $K(\delta)\le C \sqrt{1+\delta
}$. Therefore, by Lemma~\ref{estlmm}, if $E_1$ and $E_2$ both happen, then
%
\begin{eqnarray}
\|\hp W-pM\|_F^2 &\le  & C(1+\delta) \|Y-pM\|\|pM\|_*
\nonumber
\\
\label{wpm} &\le&  C\sqrt{n\hp}\|pM\|_*
\\
&\le &  Cn^{1/2}p^{3/2}\|M\|_*.\nonumber
\end{eqnarray}
By the definition of $\hat{M}$, it is obvious that $|\hat
{m}_{ij}-m_{ij}|\le|w_{ij}-m_{ij}|$ for all $i$ and $j$. Together with
\eqref{wpm}, this shows that under $E_1\cap E_2$,
\begin{eqnarray*}
p^2\|\hat{M}-M\|_F^2 &\le&  p^2
\|W-M\|_F^2
\\
&\le&  C\hp^2\|W-M\|_F^2
\\
&\le &  C\|\hp W - pM\|_F^2 + C (\hp-p)^2\|M
\|_F^2
\\
&\le & Cn^{1/2}p^{3/2}\|M\|_*+ C(\hp-p)^2mn.
\end{eqnarray*}
Note that $\ee(\hp-p)^2 = p(1-p)/mn$ and that $\|\hat{M}-M\|^2_F\le
4mn$. Thus, by \eqref{pe1} and \eqref{pe2},
\begin{eqnarray*}
\ee\|\hat{M}-M\|_F^2 &\le & Cn^{1/2}
p^{-1/2}\|M\|_* + Cp^{-1} + Cmn \bigl(1-\pp(E_1
\cap E_2)\bigr)
\\
&\le &  Cn^{1/2}p^{-1/2} \|M\|_* + Cp^{-1} + C(\ep)mn
e^{-cnp}.
\end{eqnarray*}
%
Dividing throughout by $mn$, we arrive at the inequality
%
\begin{equation}
\label{mmbd1}
\frac{1}{mn} \ee\|\hat{M}-M\|_F^2
\le\frac{C\|M\|_*}{m\sqrt{np}} + \frac{C}{np} + C(\ep)e^{-cnp}.
\end{equation}
The next goal is to show that
%
\begin{equation}
\label{mmbd2}
\frac{1}{mn}\ee\|\hat{M}-M\|_F^2 \le
\frac{C\|M\|_*^2}{mn} + C(\ep)e^{-cnp}.
\end{equation}
First, suppose that $\|M\|_*> \eta\sqrt{n/p}/20$. Then
\[
\frac{\|M\|_*}{m\sqrt{np}}+\frac{1}{np} \le\frac{C\|M\|_*^2}{mn},
\]
and so \eqref{mmbd2} follows from \eqref{mmbd1}. Therefore, assume that
$\|M\|_* \le\eta\sqrt{n/p}/20$. Then in particular, $\|M\|\le\eta\sqrt
{n/p}/20$. Therefore, if $E_1\cap E_2$ happens, then
\begin{eqnarray*}
\|Y\|&\le & \|Y-pM\|+\|pM\|
\\
&\le & (2+\eta/2+\eta/20)\sqrt{np}
\\
&\le & \frac{(2+11\eta/20)\sqrt{n\hp}}{1-\eta/20}\le(2+13\eta/20)\sqrt {n\hp}.
\end{eqnarray*}
This implies that there is no singular value of $Y$ that exceeds
$(2+\eta)\sqrt{n\hp}$, and therefore $\hat{M}=0$. Consequently,
\[
\|\hat{M}-M\|_F^2 =\|M\|_F^2
\le\|M\|_*^2.
\]
Thus, if $\|M\|_*\le\eta\sqrt{n/p}/20$, then by \eqref{pe1} and \eqref{pe2},
\[
\frac{1}{mn}\ee\|\hat{M}-M\|_F^2 \le
\frac{\|M\|_*^2}{mn} + C\bigl(1-\pp (E_1\cap E_2)\bigr) \le
\frac{\|M\|_*^2}{mn} + C(\ep)e^{-cnp}.
\]
Combining the above steps and observing that $\operatorname{MSE}(\hat{M})\le
1$ due to the boundedness of the entries of $M$ and $\hat{M}$, we get
\[
\operatorname{MSE}(\hat{M}) \le C\min \biggl\{\frac{\|M\|_*}{m\sqrt{np}} + \frac{1}{np} ,
\frac{\|M\|_*^2}{mn} , 1 \biggr\} + C(\ep)e^{-cnp}.
\]
To remove the $1/np$ term, note that if that term indeed matters, then
we are in a situation where
\[
\frac{\|M\|_*}{m\sqrt{np}}\le\frac{\|M\|_*^2}{mn}.
\]
But this inequality, on the other hand, implies that
\[
\frac{\|M\|_*}{m\sqrt{np}}\ge\frac{1}{mp}\ge\frac{1}{np}.
\]
Therefore, the $1/np$ term can be removed from the above bound.
This completes the proof of Theorem~\ref{mainest} if no nontrivial
bound on $\var(x_{ij})$ is known.

If $\sigma^2\le1$ is a known constant such that $\var(x_{ij})\le\sigma
^2$ for all $i, j$, then the estimate \eqref{varyij} may be improved to
\[
\var(y_{ij})= p\var(x_{ij}) + p(1-p) m_{ij}^2
\le\max_{(a,b)\in R} \bigl(pb + p(1-p) a\bigr),
\]
where $R$ is the quadrilateral region
\[
\bigl\{(a,b)\dvtx  0\le a\le1, 0\le b\le\sigma^2, 0\le a+b\le1\bigr\}.
\]
The maximum must be attained at one of the four vertices of $R$. An
easy verification shows that the maximum is always attained at the
vertex $(1-\sigma^2, \sigma^2)$, which gives the upper bound
\[
\var(y_{ij})\le q := p\sigma^2 +p(1-p) \bigl(1-
\sigma^2\bigr) .
\]
This allows us to replace the threshold $(2+\eta)\sqrt{n\hat{p}}$ by
$(2+\eta)\sqrt{n\hat{q}}$, where $\hat{q}= \hat{p}\sigma^2 + \hat
{p}(1-\hat{p}) (1-\sigma^2)$. As before, we need that $q \ge n^{-1+\ep
}$. The rest of the proof goes through with the following
modifications: Replace $\sqrt{np}$ by $\sqrt{nq}$ in the definition of
$E_1$, keep $E_2$ the same, and define an event $E_3 = \{|\hat{q}-q|\le
\eta q/20\}$. By Theorem~\ref{normthm}, $\pp(E_1)\ge1-C(\ep)e^{-cnq}$,
$\pp(E_2)\ge1-2e^{-cmnp}\ge1-2e^{-cmn q}$, and $\pp(E_3) \ge
1-2e^{-cmnq}$ since $|\hat{q}-q|\le|\hat{p}-p|$ and $q\ge p(1-p)$ and,
therefore,
\[
\pp\bigl(E_3^c\bigr) \le\pp\bigl(|\hat{p}-p|> \eta q/20\bigr)
\le2\exp \biggl(-\frac{cmn
q^2}{6p(1-p) + \eta q/10} \biggr)\le2e^{-cmnq}.
\]
If $E_1\cap E_2 \cap E_3$ happens, then the subsequent steps remain the
same, but with some suitable modifications that replace the term $\|M\|
_*/(m\sqrt{np})$ by the improved term $\|M\|_* \sqrt{q}/(m\sqrt{n} p)$.

\subsection{Proof of Theorem~\texorpdfstring{\protect\ref{minimaxthm}}{1.2} (Minimax optimality)}

Throughout this proof, $C$~will denote any positive universal constant,
whose value may change from line to line.

Take any $\delta\in[0, m\sqrt{n}]$ and let $\theta:= \delta/(m\sqrt
{n})$. We will first work out the proof under the assumption that
$p<1/2$. Under this assumption, three situations are considered. First,
suppose that
%
\begin{equation}
\label{thetap1}
\theta/\sqrt{p}\le1 \quad \mbox{and}\quad  m\theta\sqrt{p}\ge1.
\end{equation}
Let $k:=[m\theta\sqrt{p}]$. Clearly, $k\le m$. Let $M$ be an $m\times
n$ random matrix whose first $k$ rows consist of i.i.d.
$\operatorname{Uniform}[-1,1]$ random variables, and copy this block $[1/p]$ times.
This takes care of $k[1/p]$ rows. [This is okay, since $k/p\le m\theta
/\sqrt{p}\le m$ by \eqref{thetap1}.] Declare the remaining rows, if
any, to be zero. Then note that $M$ has rank $\le k\le m\theta\sqrt
{p}$. Therefore, by inequality~\eqref{csineq0},
\[
\|M\|_*  \le(m\theta\sqrt{p})^{1/2} \|M\|_F \le(m\theta
\sqrt {p})^{1/2} (mn\theta/\sqrt{p})^{1/2} = m \sqrt{n} \theta.
\]
Let $X=M$. Let $D$ be our data, that is, the observed values of $X$.
One can imagine $D$ as a matrix whose $(i,j)$th entry is $x_{ij}$ if
$x_{ij}$ is observed, and a question mark if $x_{ij}$ is unobserved.
For any $(i,j)$ belonging to the nonzero portion of the matrix $M$,
$M$ contains $[1/p]$ copies of $m_{ij}$. Since the $X$-value at the
location of each copy is observed with probability $p$, independent of
the other copies, and $p<1/2$, therefore, the chance that none of these
copies are observed is bounded below by a positive universal constant.
If none of the copies are observed, then the data contains no
information about $m_{ij}$. Using this, it is not difficult to write
down a formal argument that shows
\[
\ee\bigl(\var(m_{ij}\vert D)\bigr) \ge C.
\]
On the other hand, since $\tilde{m}_{ij}$ is a function of $D$, the
definition of variance implies that
\[
\ee\bigl((\tilde{m}_{ij}-m_{ij})^2\vert D\bigr)
\ge\var(m_{ij}\vert D).
\]
Combining the last two displays, we see that
%
\begin{eqnarray}
\ee\|\tilde{M}-M\|_F^2 &\ge & \sum
_{i=1}^{k[1/p]} \sum_{j=1}^n
\ee(\tilde {m}_{ij}-m_{ij})^2
\nonumber
\\[-8pt]
\label{mmf}\\[-8pt]
\nonumber
&\ge &  Ck[1/p]n\ge\frac{Cmn\theta}{\sqrt{p}}.
\end{eqnarray}
The argument that led to the above lower bound is a typical example of
the classical Bayesian argument for obtaining minimax lower bounds, and
will henceforth be referred to as the ``standard minimax argument'' to
avoid repetition of details.

Next, assume that
%
\begin{equation}
\label{thetap2}
\theta/\sqrt{p}\le1   \quad \mbox{and}\quad  m\theta\sqrt{p}< 1.
\end{equation}
Let $M$ be an $m\times n$ matrix whose first row consists of i.i.d.
random variables uniformly distributed over the interval $[-m\theta\sqrt
{p},m\theta\sqrt{p}]$, and this row is copied $[1/p]$ times, and all
other rows are zero. Then $M$ has rank $\le 1$, and therefore by
inequality \eqref{csineq0},
\[
\|M\|_* \le\|M\|_F \le m\theta\sqrt{p} \sqrt{\frac{n}{p}} =
m\theta \sqrt{n}.
\]
On the other hand, a standard minimax argument as before implies that
for any estimator $\tilde{M}$,
\[
\ee\|\tilde{M}-M\|_F^2 \ge(m\theta\sqrt{p})^2
\frac{n}{p} = nm^2 \theta^2.
\]
In particular, under \eqref{thetap2}, there exists $M$ with $\|M\|_*\le
\delta$ such that
\[
\operatorname{MSE}(\hat{M})\ge\frac{C\delta^2}{mn}.
\]
Finally, suppose that
%
\begin{equation}
\label{thetap3}
\theta/\sqrt{p}> 1.
\end{equation}
Let $M$ be an $m\times n$ matrix whose first $[mp]$ rows consist of
i.i.d.  random variables uniformly distributed over $[-1,1]$, and this
block is copied $[1/p]$ times. Then the rank of $M$ is $\le[mp]$, and
so by \eqref{thetap3} and \eqref{csineq0},
\[
\|M\|_*\le\sqrt{mp} \|M\|_F\le m \sqrt{np}\le\theta m\sqrt{n}.
\]
Again by a standard minimax argument, it is easy to conclude that for
any estimator $\tilde{M}$, there exists $M$ with $\|M\|_*\le\delta$
such that
\[
\operatorname{MSE}(\tilde{M}) \ge C.
\]
This completes the proof when $p< 1/2$. Next, suppose that $p\ge1/2$.
The only place where the assumption $p< 1/2$ was used previously was
for proving that $\ee(\var(m_{ij}|D)) \ge C$. This can be easily taken
care of by inserting some randomness into the data matrix $X$, as
follows. First, replace $M$ by $\frac{1}{2}M$ in all three cases above.
This retains the condition $\|M\|_*\le\delta$. Given $M$, let $X$ be
the data matrix whose $(i,j)$th entry $x_{ij}$ is uniformly distributed
over the interval $[m_{ij}-1/2, m_{ij}+1/2]$, whenever $(i,j)$ is the
``main block'' of $M$; and this value of $x_{ij}$ is copied $[1/p]$ times
in the appropriate places.

Since the entries of $M$ are now guaranteed to be in $[-1/2,1/2]$, this
ensures that the entries of $X$ are in $[-1,1]$. Now note that even if
$x_{ij}$ or one of its copies is observed, it gives only limited
information about $m_{ij}$. In particular, it is easy to prove that $\ee
(\var(m_{ij}|D))\ge C$ and complete the proof as before.

This complete the proof of Theorem~\ref{minimaxthm} for the asymmetric
model. For the symmetric model, simply observe that the singular values
of any square matrix $M$ are the same as those of the symmetric matrix
\[
\left[
\matrix{
0 & M
\cr
M^T & 0}
 \right]
\]
with multiplicity doubled. It is now clear how the minimax arguments
for the asymmetric model may be carried over to the symmetric case by
considering the same Bayesian models for $M$ and working with the
corresponding\vspace*{1pt} symmetrized matrices. For the skew-symmetric case,
replace the $M^T$ by $-M^T$ in the above matrix.

\subsection{Proof of Theorem~\texorpdfstring{\protect\ref{impossible}}{1.3} (Impossibility of error
estimation)}
Suppose that a good procedure P exists. By the definition of
nontriviality of the estimator $\tilde{M}$, there exists a sequence of
parameter matrices $M_n$ and data matrices $X_n$ such that
%
\begin{equation}
\label{imposs1}
\operatorname{MSE}\bigl(\hat{M}_n^{\mathrm{Trv}}\bigr) \not\to0 \qquad \mbox{as } n\ra \infty,
\end{equation}
but
%
\begin{equation}
\label{imposs2} \lim_{n\ra\infty} \operatorname{MSE}(\tilde{M}_n)=0.
\end{equation}
Then by the definition of goodness,
\[
\widehat{\operatorname{MSE}}_{\mathrm{P}}(\tilde{M}_n) \to0  \qquad\mbox{in probability as }n\to\infty.
\]
%
Suppose, without loss of generality, that all the data matrices are
defined on the same probability space. Then taking a subsequence if
necessary, we may assume that in addition to \eqref{imposs1} and \eqref
{imposs2}, we also have
%
\begin{equation}
\label{imposs3}
\pp \Bigl(\lim_{n\ra\infty} \widehat{\operatorname{MSE}}_{\mathrm{P}}(\tilde {M}_{n}) = 0 \Bigr) = 1.
\end{equation}
Let $M_n' := X_n$ and $X_n' := X_n$ for all $n$. Consider $M_n'$ as a
(random) parameter matrix and $X_n'$ as its data matrix. Given $M_n'$,
the expected value of $X_n'$ is $M_n'$; so it is okay to treat $M_n'$
as a parameter matrix and $X_n'$ as its data matrix. We will denote the
estimate of $M_n'$ constructed using $X_n'$ as $\tilde{M}_n'$. Note
that since $M_n'$ is random, the mean squared error of $\tilde{M}_n'$
is a random variable.

Now, since the estimator $\tilde{M}_n'$ is computed using the data
matrix only, and $X_n'=X_n$, it is clear that $\tilde{M}_n' = \tilde
{M}_n$. There is no randomness in $\tilde{M}_n'$ when $M_n'$ is given,
since $X_n'=M_n'$. Thus, if $r_n$ and $c_n$ denote the number of rows
and columns of $M_n$, then
\begin{eqnarray*}
\operatorname{MSE}\bigl(\tilde{M}_n'\bigr) &=&
\frac{1}{r_nc_n} \bigl\|\tilde{M}_n' - M_n'
\bigr\| _F^2
\\
&=& \frac{1}{r_nc_n} \|\tilde{M}_n - X_n
\|_F^2
\\
&= &\frac{1}{r_nc_n} \bigl\|\tilde{M}_n - \hat{M}^{\mathrm{Trv}}_n
\bigr\|_F^2
\\
&\ge &\frac{1}{2r_n c_n} \bigl\|\hat{M}^{\mathrm{Trv}}_n - M_n
\bigr\|_F^2 - \frac
{1}{r_n c_n} \|\tilde{M}_n -
M_n\|_F^2,
\end{eqnarray*}
where the last step follows from the inequality $(a+b)^2 \le2a^2 +
2b^2$ and the triangle inequality for the Frobenius norm. Taking
expectation on both sides gives
\[
\ee\bigl(\operatorname{MSE}\bigl(\tilde{M}_n'\bigr)\bigr)
\ge\tfrac{1}{2} \operatorname{MSE}\bigl(\hat {M}_n^{\mathrm{Trv}}
\bigr) - \operatorname{MSE}(\tilde{M}_n).
\]
Therefore, by \eqref{imposs1} and \eqref{imposs2},
\[
\ee\bigl(\operatorname{MSE}\bigl(\tilde{M}_n'\bigr)\bigr)
\not\to0 \qquad \mbox{as }n\ra\infty.
\]
In particular, since mean squared errors are uniformly bounded by $1$,
%
\begin{equation}
\label{posprob}
\pp \bigl(\operatorname{MSE}\bigl(\tilde{M}_n'
\bigr) \not\to0  \mbox{ as }n\ra\infty\bigr) > 0.
\end{equation}
Again since $\widehat{\operatorname{MSE}}_{\mathrm{P}}$ is computed using the
data matrix only, therefore, for all~$n$,
\[
\widehat{\operatorname{MSE}}_{\mathrm{P}}(\tilde{M}_n) = \widehat{
\mathrm {MSE}}_{\mathrm{P}}\bigl(\tilde{M}_n'\bigr).
\]
Therefore, by \eqref{imposs3},
%
\begin{equation}
\label{posprob2}
\pp \Bigl(\lim_{n\ra\infty}\widehat{
\operatorname{MSE}}_{\mathrm{P}}\bigl(\tilde {M}_n'\bigr)= 0
\Bigr) =1.
\end{equation}
Equations \eqref{posprob} and \eqref{posprob2} demonstrate the
existence of a sequence of parameter matrices $M_n'$ and data matrices
$X_n'$ such that $\operatorname{MSE}(\tilde{M}'_n) \not\to0$ but $\widehat
{\operatorname{MSE}}_{\mathrm{P}}(\tilde{M}_n') \to0$ in probability. This
contradicts the goodness of $\widehat{\operatorname{MSE}}_{\mathrm{P}}$.

\subsection{Proof of Theorem~\texorpdfstring{\protect\ref{lowrankthm}}{2.1} (Upper bound for low rank
matrix estimation)}

Inequality \eqref{csineq0} implies that
\[
\|M\|_*\le \sqrt{\rank(M)} \|M\|_F\le\sqrt{rmn}.
\]
The result now follows from Theorem~\ref{mainest}.

\subsection{Proof of Theorem~\texorpdfstring{\protect\ref{lowranklow}}{2.2} (Lower bound for low rank
matrix estimation)}

Let $M$ be an $m\times n$ random matrix whose first $r$ rows consist of
i.i.d. $\operatorname{Uniform}[-1,1]$ random variables, and copy this block $[m/r]$
times. Declare the remaining rows, if any, to be zero. Then note that
$M$ has rank${}\le r$.

Let $D$ be our data, that is, the observed values of $M$. One can
imagine $D$ as a matrix whose $(i,j)$th entry is $m_{ij}$ if $m_{ij}$
is observed, and a question mark if $m_{ij}$ is unobserved. For any
$(i,j)$ belonging to the nonzero portion of the matrix $M$, $M$
contains $[m/r]$ copies of $m_{ij}$. Since the $M$-value at the
location of each copy is observed with probability $p$, independent of
the other copies, the chance that none of these copies are observed is
equal to $(1-p)^{[m/r]}$. If none of the copies are observed, then the
data contains no information about $m_{ij}$. Using this, it is not
difficult to write down a formal argument that shows
\[
\ee\bigl(\var(m_{ij}\vert D)\bigr) \ge C(1-p)^{[m/r]},
\]
where $C$ is some universal constant.
On the other hand, since $\tilde{m}_{ij}$ is a function of~$D$, the
definition of variance implies that
\[
\ee\bigl((\tilde{m}_{ij}-m_{ij})^2\vert D\bigr)
\ge\var(m_{ij}\vert D).
\]
Combining the last two displays, we see that
\[
\ee\|\tilde{M}-M\|_F^2  \ge Cmn (1-p)^{[m/r]}.
\]
This completes the proof.

\subsection{Proof of Theorem~\texorpdfstring{\protect\ref{blockthm}}{2.3} (Block model estimation)}
If two vertices $i$ and $j$ are in the same block, then the $i$th and
$j$th rows of $M$ are identical. Therefore, $M$ has at most $k$
distinct rows and so the rank of $M$ is $\le k$. An application of
Theorem~\ref{lowrankthm} completes the proof.

\subsection{Proofs of Theorems \texorpdfstring{\protect\ref{compact1}}{2.4} 
and \texorpdfstring{\protect\ref{compact2}}{2.5}
(Distance matrix estimation)}
The proofs of Theorems \ref{compact1} and \ref{compact2} follow from a
more general lemma that will also be useful later for other purposes.
Suppose that $S = \{x_1,\ldots,x_n\}$ is a finite set and $f\dvtx S\times S
\ra[-1,1]$ is an arbitrary function. Suppose that for each $\delta>0$,
there exists a partition $\cp(\delta)$ of $S$ such that whenever
$x,y,x',y'$ are four points in $S$ such that $x,x'\in P$ for some $P\in
\cp(\delta)$ and $y,y'\in Q$ for some $Q\in\cp(\delta)$, then
$|f(x,y)-f(x', y')|\le\delta$. Let $M$ be the $n\times n$ matrix whose
$(i,j)$th element is $f(x_i, x_j)$.

\begin{lmm}\label{function}
In the above setting,
\[
\operatorname{MSE}(\hat{M}) \le C\inf_{\delta> 0} \min \biggl\{
\frac{\delta
+ \sqrt{|\cp(\delta)|/n}}{\sqrt{p}} , 1 \biggr\} + C(\ep)e^{-cnp},
\]
where $C$ and $c$ depend only on $\eta$, and $C(\ep)$ depends only on
$\ep$ and $\eta$.
\end{lmm}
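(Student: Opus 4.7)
\textbf{Proof proposal for Lemma \ref{function}.} The plan is to reduce Lemma \ref{function} to Theorem \ref{mainest} by constructing, for each $\delta > 0$, a nearby matrix $M'$ that has both small distance to $M$ in Frobenius norm and small rank, and then bounding the nuclear norm of $M$ via the triangle inequality.

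First, fix $\delta > 0$. For each part $P \in \cp(\delta)$ choose a representative point $r(P) \in P$, and for every $x \in S$ let $r(x)$ denote the representative of the unique block containing $x$. Define the $n \times n$ matrix $M'$ by
\[
m'_{ij} := f(r(x_i), r(x_j)).
\]
By the defining property of the partition $\cp(\delta)$, the entrywise error satisfies $|m_{ij} - m'_{ij}| \le \delta$ for all $i,j$, so $\|M-M'\|_F \le n\delta$. Since $M - M'$ has rank at most $n$, inequality \eqref{csineq0} gives $\|M-M'\|_* \le \sqrt{n}\,\|M-M'\|_F \le n^{3/2}\delta$. Moreover, the $i$th row of $M'$ depends only on the block containing $x_i$, so $M'$ has at most $|\cp(\delta)|$ distinct rows, hence $\rank(M') \le |\cp(\delta)|$. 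Using \eqref{csineq0} once more together with $\|M'\|_F \le n$ (entries bounded by $1$),
\[
\|M'\|_* \le \sqrt{|\cp(\delta)|}\,\|M'\|_F \le n\sqrt{|\cp(\delta)|}.
\]
By the triangle inequality for the nuclear norm,
\[
\|M\|_* \le \|M'\|_* + \|M - M'\|_* \le n\sqrt{|\cp(\delta)|} + n^{3/2}\delta.
\]

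Next, apply Theorem \ref{mainest} with $m=n$, which yields
\[
\mathrm{MSE}(\hat{M}) \le C \min\Bigl\{\tfrac{\|M\|_*}{n\sqrt{np}},\, 1\Bigr\} + C(\ep)e^{-cnp}.
\]
Plugging in the bound on $\|M\|_*$,
\[
\frac{\|M\|_*}{n\sqrt{np}} \le \frac{n\sqrt{|\cp(\delta)|} + n^{3/2}\delta}{n\sqrt{np}} = \frac{\sqrt{|\cp(\delta)|/n} + \delta}{\sqrt{p}}.
\]
Since this holds for every $\delta > 0$, taking the infimum over $\delta$ gives exactly the bound claimed in Lemma \ref{function}.

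There is no real obstacle here: the whole argument is a clean $\delta$-net / low-rank approximation of $M$ by a blockwise-constant matrix, combined with the main theorem. The only slightly delicate point is organising the two contributions (the approximation error $n^{3/2}\delta$ in nuclear norm, and the rank-$|\cp(\delta)|$ nuclear norm bound $n\sqrt{|\cp(\delta)|}$) so that the factors of $n$ and $p$ line up to produce precisely $(\delta + \sqrt{|\cp(\delta)|/n})/\sqrt{p}$ after dividing by $n\sqrt{np}$. Theorems \ref{compact1} and \ref{compact2} will then follow routinely: for Theorem \ref{compact2}, each covering of $K$ by $N(\delta/4)$ open balls of radius $\delta/4$ induces (by assigning each point of $S$ to one of the balls containing it, say by a greedy rule) a partition $\cp(\delta)$ with $|\cp(\delta)|\le N(\delta/4)$ and within-block diameter at most $\delta/2$, which by the triangle inequality for $d$ yields the required $\delta$-oscillation bound on $f(x,y) = d(x,y)$; Theorem \ref{compact1} is then obtained by letting $\delta = \delta_n \to 0$ slowly enough that $N(\delta_n/4)/n \to 0$, which is possible because $K$ is compact.
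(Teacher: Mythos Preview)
Your proof is correct and follows essentially the same approach as the paper: construct the representative-based matrix $M'$ (the paper calls it $N$), bound $\|M-M'\|_*\le n^{3/2}\delta$ via \eqref{csineq0}, bound $\|M'\|_*\le n\sqrt{|\cp(\delta)|}$ using the rank bound, combine by the triangle inequality for the nuclear norm, and feed the result into Theorem~\ref{mainest}. Your sketch of how Theorems~\ref{compact1} and~\ref{compact2} follow also matches the paper's argument.
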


\begin{pf}
Fix some $\delta>0$. Let $T$ be a subset of $S$ consisting of exactly
one point from each member of $\cp(\delta)$. For each $x\in S$, let
$p(x)$ be the unique element of $T$ such that $x$ and $p(x)$ belong to
the same element of $\cp(\delta)$. Let $N$ be the matrix whose
$(i,j)$th element is $f(p(x_i), p(x_j))$. Then
\[
\|M-N\|_F^2 = \sum_{i,j=1}^n
\bigl(f(x_i, x_j)-f\bigl(p(x_i),
p(x_j)\bigr)\bigr)^2 \le n^2
\delta^2.
\]
By the triangle inequality for the nuclear norm, the inequality \eqref
{csineq0} and the above inequality,
\begin{eqnarray*}
\|M\|_* &\le & \|M-N\|_* + \|N\|_*
\\
&\le & \sqrt{n}\|M-N\|_F + \|N\|_*
\\
&\le &  n^{3/2} \delta+ \|N\|_*.
\end{eqnarray*}
Now, if $x_i$ and $x_j$ belong to the same element of $\cp(\delta)$,
then $p(x_i)=p(x_j)$, and hence the $i$th and $j$th rows of $N$ are
identical. This shows that $N$ has at most $|\cp(\delta)|$ distinct
rows and, therefore, has rank${}\le|\cp(\delta)|$. Therefore, by the
inequality \eqref{csineq0},
\[
\|N\|_*\le\sqrt{\bigl|\cp(\delta)\bigr|} \|N\|_F \le\sqrt{\bigl|\cp(\delta)\bigr|} n.
\]
The proof is completed by applying Theorem~\ref{mainest}.
\end{pf}

Using Lemma~\ref{function}, it is easy to prove Theorems \ref{compact1}
and \ref{compact2}.
\begin{pf*}{Proof of Theorem~\ref{compact2}}
Let all notation be as in Theorem~\ref{compact2}. To apply Lemma~\ref
{function}, let $S$ be the set $\{x_1,\ldots, x_n\}$. From the
definition of $N(\delta)$, it is easy to see that there is a partition
$\cp(\delta)$ of $S$ of size $\le N(\delta/4)$, such that any two
points belonging to the same element of the partition are at distance
$\le\delta/2$ from each other. Consequently, if $x,x'\in P$ and
$y,y'\in Q$ for some $P, Q\in\cp(\delta)$, then by the triangle
inequality for the metric $d$,
\begin{eqnarray*}
\bigl|d(x,y) - d\bigl(x', y'\bigr)\bigr| &\le & \bigl|d(x,y)-d
\bigl(x', y\bigr)\bigr| + \bigl|d\bigl(x', y\bigr)-d
\bigl(x', y'\bigr)\bigr|
\\
&\le &  d\bigl(x,x'\bigr) + d\bigl(y,y'\bigr) \le
\delta.
\end{eqnarray*}
Putting $f=d$ in Lemma~\ref{function}, the proof is complete.
\end{pf*}

\begin{pf*}{Proof of Theorem~\ref{compact1}}
Since $K$ is compact, there exists a finite number $N(\delta)$ for each
$\delta> 0$ such that $K$ may be covered by $N(\delta)$ open $d$-balls
of radius~$\delta$. By Theorem~\ref{compact2}, this shows that for any
sequence $\delta_n$ decreasing to zero,
\[
\operatorname{MSE}(\hat{M}) \le C\min \biggl\{\frac{\delta_n + \sqrt{N(\delta
_n/4)/n}}{\sqrt{p}} , 1 \biggr\} + C(
\ep)e^{-cnp}.
\]
To complete the proof, choose $\delta_n$ going to zero so slowly that
$N(\delta_n/4)=o(n)$ as $n \ra\infty$.
\end{pf*}

\subsection{Proof of Theorem~\texorpdfstring{\protect\ref{latent1}}{2.6} (Latent space models:
General case)}
We will apply Lemma~\ref{function}. Let $S$ be the set $\{\beta_1,\ldots
, \beta_n\}$. Since $f$ is continuous on $K$ and $K$ is compact, $f$
must be uniformly continuous. This shows that for each $\delta> 0$ we
can find a partition $\cp(\delta)$ of $S$ satisfying the condition
required for Lemma~\ref{function}, such that the size of $\cp(\delta)$
may be bounded by a constant $N(\delta)$ depending only on $K$, $k$,
$f$ and $\delta$. Choosing $\delta_n \ra0$ slowly enough so that
$N(\delta_n/4)=o(n)$ and applying Lemma~\ref{function} completes the proof.

\subsection{Proof of Theorem~\texorpdfstring{\protect\ref{latent2}}{2.7} (Latent space models:
Lipschitz functions)}
Let $S=\{\beta_1,\ldots, \beta_n\}$. Take any $\delta> 0$. From the
Lipschitzness condition, it is easy to see that we can find a partition
$\cp(\delta)$ of $S$ whose size may be bounded by $C(K,k,L)\delta
^{-k}$, where $C(K,k,L)$ depends only on $K$, $k$ and $L$. Choosing
$\delta= n^{-1/(k+2)}$ and applying Lemma~\ref{function} completes the
proof. Note that the exponential term need not appear since the main
term is bounded below by a\vspace*{1.5pt} positive constant if $p < n^{-2/(k+2)}$.

\subsection{Proof of Theorem~\texorpdfstring{\protect\ref{pdthm}}{2.8} (Upper bound for positive
definite matrix estimation)}
Since $M$ is positive semi-definite, $\|M\|_*=\tr(M)$. Since the
entries of $M$ are bounded by $1$, $\tr(M)\le n$. The proof now follows
from an application of Theorem~\ref{mainest}.

\subsection{Proof of Theorem~\texorpdfstring{\protect\ref{pdlow}}{2.9} (Lower bound for positive
definite matrix estimation)}
Throughout this proof, $C$ will denote any positive universal constant,
whose value may change from line to line.

Let $U_1,\ldots, U_n$ be i.i.d. $\operatorname{Uniform}[0,1]$ random variables. Let
$M$ be the random matrix whose $(i,j)$th element $m_{ij}$ is equal to
$U_iU_j$ if $i\ne j$ and $1$ if $i=j$. It is easy to verify that $M$ is
a correlation matrix. Suppose that we observe each element of $M$ on
and above the diagonal with probability $p$, independent of each other.
Let $D$ be our data, represented as follows: $D$ is a matrix whose
$(i,j)$th element is $m_{ij}$ if the element is observed, and a
question mark otherwise.

Now, the probability that no element from the $i$th row and the $i$th
column is observed is exactly equal to $(1-p)^n$. If we do not observe
any element from the $i$th row and $i$th column, we have no information
about the value of $U_i$. From this, it is not difficult to write down
a formal argument to prove that for any $j\ne i$,
\[
\var\bigl(m_{ij} \vert D, (U_k)_{k\ne i}\bigr) =
U_j^2 \var\bigl(U_i \vert D,
(U_k)_{k\ne i}\bigr)\ge C(1-p)^nU_j^2.
\]
If $\tilde{M}$ is any estimator, then $\tilde{m}_{ij}$ is a function of
$D$. Therefore, by the above inequality and the definition of variance,
\[
\ee\bigl((\tilde{m}_{ij}-m_{ij})^2\vert D,
(U_k)_{k\ne i}\bigr) \ge C(1-p)^nU_j^2,
\]
and thus
\[
\ee(\tilde{m}_{ij}-m_{ij})^2 \ge
C(1-p)^n.
\]
Since this is true for all $i\ne j$, the proof is complete.

\subsection{Proof of Theorem~\texorpdfstring{\protect\ref{graphonthm}}{2.10} (Graphon estimation)}
Here, all entries of the adjacency matrix are visible, so $p=1$. Define
a sequence of functions $f_1, f_2,\ldots$ according the following
standard construction. For each $k$, let $\cp_k$ be the $k$th level
dyadic partition of $[0,1)^2$, that is, the partition of the unit
square into sets of the form $[(i-1)/2^k, i/2^k) \times[(j-1)/2^k,
j/2^k)$. Let $f_k$ be the function that is equal to the average value
of $f$ within each square of the partition $\cp_k$. If $\mf_k$ denotes
the sigma-algebra of sets generated by the partition $\cp_k$, then the
sequence $f_k$ is a martingale with respect to the filtration $\mf_k$.
Moreover, $f_k = \ee(f|\mf_k)$. Finally, observe that the sequence
$f_k$ is uniformly bounded in $L^2$. Combining all these observations,
it is evident that $f_k \ra f$ in $L^2$.

Now fix some $\ep>0$ and an integer $n$. Take a large enough $k = k(\ep
)$ such that $\|f-f_k\|_{L^2} \le\ep$. Let $N$ be the $n\times n$
matrix whose $(i,j)$th element is $f_k(U_i, U_j)$. Then
%
\begin{eqnarray}
\ee\|M - N\|_F^2 &=&  \sum_{i,j=1}^n
\ee\bigl(f(U_i, U_j) - f_k(U_i,
U_j)\bigr)^2
\nonumber
\\
\label{mstar1} &\le&  n + n^2\ee\bigl(f(U_1, U_2)-f_k(U_1,
U_2)\bigr)^2
\\
&=&  n + n^2\|f-f_k\|_{L^2}^2\le n
+n^2\ep^2.\nonumber
\end{eqnarray}
Now note that if $U_i$ and $U_j$ belong to the same dyadic interval
$[r/2^k, (r+1)/2^k)$, then the $i$th and $j$th rows of $N$ are
identical. Hence, $N$ has at most $2^k$ distinct rows, and therefore
has rank $\le2^k$. Therefore, by \eqref{csineq0},
\[
\|N\|_*\le2^{k/2} \|N\|_F \le2^{k/2} n.
\]
Consequently, by the inequality \eqref{csineq0},
%
\begin{eqnarray}
\|M\|_* &\le & \|M-N\|_* + \|N\|_*
\nonumber
\\[-8pt]
\label{mstar2}\\[-8pt]
\nonumber
&\le & \sqrt{n} \|M-N\|_F + 2^{k/2}n.
\end{eqnarray}
Combining \eqref{mstar1} and \eqref{mstar2} gives
\[
\ee\|M\|_* \le\bigl(2^{k/2}+1\bigr) n + n^{3/2}\ep.
\]
Choosing a sequence $\ep_n$ going to zero so slowly that $2^{k(\ep
_n)/2} = o(n^{-1/2})$, we can now apply Theorem~\ref{mainest} to
complete the proof.

\subsection{Proof of Theorem~\texorpdfstring{\protect\ref{bradleythm}}{2.11} (Bradley--Terry models)}
Throughout the proof $C$ will denote any constant that depends only on
$\eta$, whose value may change from line to line.

Recall that the definition of the skew-symmetric model stipulates that
$X-M$ is skew-symmetric, which is true for the nonparametric
Bradley--Terry model. There is nothing to prove if $p< n^{-2/3}$, so
assume that $p\ge n^{-2/3}$. This allows us to drop the exponential
term in Theorem~\ref{mainest} and conclude that
%
\begin{equation}
\label{bt0}
\operatorname{MSE}(\hat{M})\le\frac{C\|M\|_*}{n^{3/2}\sqrt{p}}.
\end{equation}
Let $k$ be an integer less than $n$, to determined later. For each $i$, let
\[
t_i := \sum_{j=1}^n
p_{ij}.
\]
Note that each $t_i$ belongs to the interval $[0,n]$. For $l=1,\ldots,
k$, let $T_l$ be the set of all $i$ such that $t_i\in[n(l-1)/k,
nl/k)$. Additionally, if $t_i=n$, put $i$ in $T_k$.

For each $l$, let $r(l)$ be a distinguished element of $T_l$. For each
$1\le i,j\le n$, if $i\in T_l$ and $j\in T_m$, let $n_{ij} :=
p_{r(l)j}$. Let $N$ be the matrix whose $(i,j)$th element is~$n_{ij}$.
Note that if $i,i'\in T_l$ for some $l$, then $n_{ij} = n_{i'j}$ for
all~$j$. In particular, $N$ has at most $k$ distinct rows and therefore
has rank $\le k$. Thus, by inequality \eqref{csineq0},
%
\begin{equation}
\label{bt1}
\|N\|_*\le\sqrt{k} \|N\|_F \le n\sqrt{k}.
\end{equation}
Now take any $1\le i\le n$. Suppose that $i\in T_l$. Let $i' = r(l)$.
Suppose that team $i'$ is weaker than team $i$. Then $p_{i'j} \le
p_{ij}$ for all $j\ne i,i'$. Thus,
%
\begin{eqnarray}
\sum_{j=1}^n (p_{ij} -
n_{ij})^2 &=& \sum_{j=1}^n
(p_{ij} - p_{i'j})^2\le\sum
_{j=1}^n |p_{ij}- p_{i'j}|
\nonumber
\\
\label{bt2}&=&  \sum_{j\ne i,i'} (p_{ij} - p_{i'j})
+ p_{ii'}+ p_{i'i}
\\
&\le &  t_i - t_{i'} + 2 \le\frac{n}{k} + 2\le
\frac{3n}{k}.\nonumber
\end{eqnarray}
Similarly, if team $i'$ is stronger than team $i$,
%
\begin{equation}
\label{bt3} \sum_{j=1}^n
(p_{ij} - n_{ij})^2\le t_{i'}-t_i+2
\le\frac{3n}{k}.
\end{equation}
By \eqref{bt1}, \eqref{bt2}, \eqref{bt3} and \eqref{csineq0} we have
\begin{eqnarray*}
\|M\|_* &\le & \|M-N\|_* + \|N\|_*
\\
&\le & \sqrt{n} \|M-N\|_F + n\sqrt{k}
\\
&\le & \frac{3n^{3/2}}{\sqrt{k}} + n\sqrt{k}.
\end{eqnarray*}
Choosing $k= [n^{1/2}]$, we get $\|M\|_*\le Cn^{5/4}$.
Combined with \eqref{bt0}, this proves the claim.

\section*{Acknowledgments}
I would like to thank Emmanuel Cand\`es for introducing me to this topic, Andrea Montanari and Peter Bickel
for pointing out many relevant references, and Persi Diaconis for
helpful advice. Special thanks to Yaniv Plan for pointing out an
important mistake in the first draft, and to Philippe Rigollet for
correcting an error in Theorem~\ref{bradleythm}. I would also like to
thank the three anonymous referees for a long list of useful comments.




\printaddresses

\begin{thebibliography}{97}

\bibitem{achlioptas01}
\begin{binproceedings}[mr]
\bauthor{\bsnm{Achlioptas},~\bfnm{Dimitris}\binits{D.}} \AND
\bauthor{\bsnm{McSherry},~\bfnm{Frank}\binits{F.}}
(\byear{2001}).
\btitle{Fast computation of low rank matrix approximations}.
In \bbooktitle{Proceedings of the {T}hirty-{T}hird {A}nnual {ACM} {S}ymposium on {T}heory of {C}omputing}
\bpages{611--618 (electronic)}.
\bpublisher{ACM},
\blocation{New York}.
\bid{doi={10.1145/380752.380858}, mr={2120364}}
\end{binproceedings}
%
\bptok{imsref}%
\endbibitem

\bibitem{adams05}
\begin{barticle}[auto:parserefs-M02]
\bauthor{\bsnm{Adams},~\bfnm{E.}\binits{E.}}
(\byear{2005}).
\btitle{Bayesian analysis of linear dominance hierarchies}.
\bjournal{Animal Behaviour}
\bvolume{69}
\bpages{1191--1201}.
\end{barticle}
%
\bptok{imsref}%
\endbibitem

\bibitem{agresti90}
\begin{bbook}[mr]
\bauthor{\bsnm{Agresti},~\bfnm{Alan}\binits{A.}}
(\byear{1990}).
\btitle{Categorical Data Analysis}.
\bpublisher{Wiley},
\blocation{New York}.
\bid{mr={1044993}}
\end{bbook}
%
\bptok{imsref}%
\endbibitem

\bibitem{airoldietal08}
\begin{barticle}[auto:parserefs-M02]
\bauthor{\bsnm{Airoldi},~\bfnm{E.~M.}\binits{E.~M.}},
\bauthor{\bsnm{Blei},~\bfnm{D.~M.}\binits{D.~M.}},
\bauthor{\bsnm{Fienberg},~\bfnm{S.~E.}\binits{S.~E.}} \AND
\bauthor{\bsnm{Xing},~\bfnm{E.~P.}\binits{E.~P.}}
(\byear{2008}).
\btitle{Mixed membership stochastic blockmodels}.
\bjournal{J. Mach. Learn. Res.}
\bvolume{9}
\bpages{1981--2014}.
\end{barticle}
%
\bptok{imsref}%
\endbibitem

\bibitem{aldous81}
\begin{barticle}[mr]
\bauthor{\bsnm{Aldous},~\bfnm{David~J.}\binits{D.~J.}}
(\byear{1981}).
\btitle{Representations for partially exchangeable arrays of random variables}.
\bjournal{J. Multivariate Anal.}
\bvolume{11}
\bpages{581--598}.
\bid{doi={10.1016/0047-259X(81)90099-3}, issn={0047-259X}, mr={0637937}}
\end{barticle}
%
\bptok{imsref}%
\endbibitem

\bibitem{alfakihetal99}
\begin{barticle}[mr]
\bauthor{\bsnm{Alfakih},~\bfnm{Abdo~Y.}\binits{A.~Y.}},
\bauthor{\bsnm{Khandani},~\bfnm{Amir}\binits{A.}} \AND
\bauthor{\bsnm{Wolkowicz},~\bfnm{Henry}\binits{H.}}
(\byear{1999}).
\btitle{Solving {E}uclidean distance matrix completion problems via semidefinite programming}.
\bjournal{Comput. Optim. Appl.}
\bvolume{12}
\bpages{13--30}.
\bid{doi={10.1023/A:1008655427845}, issn={0926-6003}, mr={1704098}}
\end{barticle}
%
\bptok{imsref}%
\endbibitem

\bibitem{chenetal12}
\begin{barticle}[mr]
\bauthor{\bsnm{Amini},~\bfnm{Arash~A.}\binits{A.~A.}},
\bauthor{\bsnm{Chen},~\bfnm{Aiyou}\binits{A.}},
\bauthor{\bsnm{Bickel},~\bfnm{Peter~J.}\binits{P.~J.}} \AND
\bauthor{\bsnm{Levina},~\bfnm{Elizaveta}\binits{E.}}
(\byear{2013}).
\btitle{Pseudo-likelihood methods for community detection in large sparse networks}.
\bjournal{Ann. Statist.}
\bvolume{41}
\bpages{2097--2122}.
\bid{doi={10.1214/13-AOS1138}, issn={0090-5364}, mr={3127859}}
\end{barticle}
%
\bptok{imsref}%
\endbibitem

\bibitem{agz}
\begin{bbook}[mr]
\bauthor{\bsnm{Anderson},~\bfnm{Greg~W.}\binits{G.~W.}},
\bauthor{\bsnm{Guionnet},~\bfnm{Alice}\binits{A.}} \AND
\bauthor{\bsnm{Zeitouni},~\bfnm{Ofer}\binits{O.}}
(\byear{2010}).
\btitle{An Introduction to Random Matrices}.
\bseries{Cambridge Studies in Advanced Mathematics}
\bvolume{118}.
\bpublisher{Cambridge Univ. Press},
\blocation{Cambridge}.
\bid{mr={2760897}}
\end{bbook}
%
\bptok{imsref}%
\endbibitem

\bibitem{austin08}
\begin{barticle}[mr]
\bauthor{\bsnm{Austin},~\bfnm{Tim}\binits{T.}}
(\byear{2008}).
\btitle{On exchangeable random variables and the statistics of large graphs and hypergraphs}.
\bjournal{Probab. Surv.}
\bvolume{5}
\bpages{80--145}.
\bid{doi={10.1214/08-PS124}, issn={1549-5787}, mr={2426176}}
\end{barticle}
%
\bptok{imsref}%
\endbibitem

\bibitem{azaretal01}
\begin{bmisc}[auto:parserefs-M02]
\bauthor{\bsnm{Azar},~\bfnm{Y.}\binits{Y.}},
\bauthor{\bsnm{Flat},~\bfnm{A.}\binits{A.}},
\bauthor{\bsnm{Karlin},~\bfnm{A.}\binits{A.}},
\bauthor{\bsnm{McSherry},~\bfnm{F.}\binits{F.}} \AND
\bauthor{\bsnm{Sala},~\bfnm{J.}\binits{J.}}
(\byear{2001}).
\bhowpublished{Spectral analysis of data.
In \textit{Proceedings of the Thirty-third Annual ACM Symposium on Theory of
Computing}
619--626. ACM, New York.}
\end{bmisc}
%
\bptok{imsref}%
\endbibitem

\bibitem{bj95}
\begin{barticle}[mr]
\bauthor{\bsnm{Bakonyi},~\bfnm{Mih{\'a}ly}\binits{M.}} \AND
\bauthor{\bsnm{Johnson},~\bfnm{Charles~R.}\binits{C.~R.}}
(\byear{1995}).
\btitle{The {E}uclidean distance matrix completion problem}.
\bjournal{SIAM J. Matrix Anal. Appl.}
\bvolume{16}
\bpages{646--654}.
\bid{doi={10.1137/S0895479893249757}, issn={0895-4798}, mr={1321802}}
\end{barticle}
%
\bptok{imsref}%
\endbibitem

\bibitem{bennett62}
\begin{barticle}[auto:parserefs-M02]
\bauthor{\bsnm{Bennett},~\bfnm{G.}\binits{G.}}
(\byear{1962}).
\btitle{Probability inequalities for sums of independent random variables}.
\bjournal{J.~Amer. Statist. Assoc.}
\bvolume{57}
\bpages{33--45}.
\end{barticle}
%
\bptok{imsref}%
\endbibitem

\bibitem{bernstein}
\begin{barticle}[auto:parserefs-M02]
\bauthor{\bsnm{Bernstein},~\bfnm{S.}\binits{S.}}
(\byear{1924}).
\btitle{Sur une modification de l'in\'equalit\'e de Tchebichef}.
\bjournal{Annals Science Institute Sav. Ukraine, Sect. Math. I}
\bnote{(Russian, French summary.) 38--49}.
\end{barticle}
%
\bptok{imsref}%
\endbibitem

\bibitem{bhatia97}
\begin{bbook}[mr]
\bauthor{\bsnm{Bhatia},~\bfnm{Rajendra}\binits{R.}}
(\byear{1997}).
\btitle{Matrix Analysis}.
\bseries{Graduate Texts in Mathematics}
\bvolume{169}.
\bpublisher{Springer},
\blocation{New York}.
\bid{doi={10.1007/978-1-4612-0653-8}, mr={1477662}}
\end{bbook}
%
\bptok{imsref}%
\endbibitem

\bibitem{bhatia08}
\begin{bbook}[mr]
\bauthor{\bsnm{Bhatia},~\bfnm{Rajendra}\binits{R.}}
(\byear{2007}).
\btitle{Positive Definite Matrices}.
\bpublisher{Princeton Univ. Press},
\blocation{Princeton, NJ}.
\bid{mr={2284176}}
\bptnote{check year}%
\end{bbook}
%
\bptok{imsref}%
\endbibitem

\bibitem{bickelchen09}
\begin{barticle}[pbm]
\bauthor{\bsnm{Bickel},~\bfnm{Peter~J.}\binits{P.~J.}} \AND
\bauthor{\bsnm{Chen},~\bfnm{Aiyou}\binits{A.}}
(\byear{2009}).
\btitle{A nonparametric view of network models and Newman-Girvan and other modularities}.
\bjournal{Proc. Natl. Acad. Sci. USA}
\bvolume{106}
\bpages{21068--21073}.
\bid{doi={10.1073/pnas.0907096106}, issn={1091-6490}, pii={0907096106}, pmcid={2795514}, pmid={19934050}}
\end{barticle}
%
\bptok{imsref}%
\endbibitem

\bibitem{bcl11}
\begin{barticle}[mr]
\bauthor{\bsnm{Bickel},~\bfnm{Peter~J.}\binits{P.~J.}},
\bauthor{\bsnm{Chen},~\bfnm{Aiyou}\binits{A.}} \AND
\bauthor{\bsnm{Levina},~\bfnm{Elizaveta}\binits{E.}}
(\byear{2011}).
\btitle{The method of moments and degree distributions for network models}.
\bjournal{Ann. Statist.}
\bvolume{39}
\bpages{2280--2301}.
\bid{doi={10.1214/11-AOS904}, issn={0090-5364}, mr={2906868}}
\end{barticle}
%
\bptok{imsref}%
\endbibitem

\bibitem{biswasetal06}
\begin{barticle}[auto:parserefs-M02]
\bauthor{\bsnm{Biswas},~\bfnm{P.}\binits{P.}},
\bauthor{\bsnm{Lian},~\bfnm{T.-C.}\binits{T.-C.}},
\bauthor{\bsnm{Wang},~\bfnm{T.-C.}\binits{T.-C.}} \AND
\bauthor{\bsnm{Ye},~\bfnm{Y.}\binits{Y.}}
(\byear{2006}).
\btitle{Semidefinite programming based algorithms for sensor network localization}.
\bjournal{ACM Trans. Sen. Netw.}
\bvolume{2}
\bpages{188--220}.
\end{barticle}
%
\bptok{imsref}%
\endbibitem

\bibitem{borggroenen10}
\begin{bbook}[mr]
\bauthor{\bsnm{Borg},~\bfnm{Ingwer}\binits{I.}} \AND
\bauthor{\bsnm{Groenen},~\bfnm{Patrick~J.~F.}\binits{P.~J.~F.}}
(\byear{2005}).
\btitle{Modern Multidimensional Scaling: Theory and Applications},
\bedition{2nd} ed.
\bpublisher{Springer},
\blocation{New York}.
\bid{mr={2158691}}
\bptnote{check year}%
\end{bbook}
%
\bptok{imsref}%
\endbibitem

\bibitem{borgsetal06}
\begin{bincollection}[mr]
\bauthor{\bsnm{Borgs},~\bfnm{Christian}\binits{C.}},
\bauthor{\bsnm{Chayes},~\bfnm{Jennifer}\binits{J.}},
\bauthor{\bsnm{Lov{\'a}sz},~\bfnm{L{\'a}szl{\'o}}\binits{L.}},
\bauthor{\bsnm{S{\'o}s},~\bfnm{Vera~T.}\binits{V.~T.}} \AND
\bauthor{\bsnm{Vesztergombi},~\bfnm{Katalin}\binits{K.}}
(\byear{2006}).
\btitle{Counting graph homomorphisms}.
In \bbooktitle{Topics in Discrete Mathematics}.
\bseries{Algorithms Combin.}
\bvolume{26}
\bpages{315--371}.
\bpublisher{Springer},
\blocation{Berlin}.
\bid{doi={10.1007/3-540-33700-8_18}, mr={2249277}}
\end{bincollection}
%
\bptok{imsref}%
\endbibitem

\bibitem{borgsetal08}
\begin{barticle}[mr]
\bauthor{\bsnm{Borgs},~\bfnm{C.}\binits{C.}},
\bauthor{\bsnm{Chayes},~\bfnm{J.~T.}\binits{J.~T.}},
\bauthor{\bsnm{Lov{\'a}sz},~\bfnm{L.}\binits{L.}},
\bauthor{\bsnm{S{\'o}s},~\bfnm{V.~T.}\binits{V.~T.}} \AND
\bauthor{\bsnm{Vesztergombi},~\bfnm{K.}\binits{K.}}
(\byear{2008}).
\btitle{Convergent sequences of dense graphs. I. {S}ubgraph frequencies, metric properties and testing}.
\bjournal{Adv. Math.}
\bvolume{219}
\bpages{1801--1851}.
\bid{doi={10.1016/j.aim.2008.07.008}, issn={0001-8708}, mr={2455626}}
\end{barticle}
%
\bptok{imsref}%
\endbibitem

\bibitem{borgsetal07}
\begin{barticle}[mr]
\bauthor{\bsnm{Borgs},~\bfnm{C.}\binits{C.}},
\bauthor{\bsnm{Chayes},~\bfnm{J.~T.}\binits{J.~T.}},
\bauthor{\bsnm{Lov{\'a}sz},~\bfnm{L.}\binits{L.}},
\bauthor{\bsnm{S{\'o}s},~\bfnm{V.~T.}\binits{V.~T.}} \AND
\bauthor{\bsnm{Vesztergombi},~\bfnm{K.}\binits{K.}}
(\byear{2012}).
\btitle{Convergent sequences of dense graphs II. {M}ultiway cuts and statistical physics}.
\bjournal{Ann. of Math. (2)}
\bvolume{176}
\bpages{151--219}.
\bid{doi={10.4007/annals.2012.176.1.2}, issn={0003-486X}, mr={2925382}}
\end{barticle}
%
\bptok{imsref}%
\endbibitem

\bibitem{bt52}
\begin{barticle}[mr]
\bauthor{\bsnm{Bradley},~\bfnm{Ralph~Allan}\binits{R.~A.}} \AND
\bauthor{\bsnm{Terry},~\bfnm{Milton~E.}\binits{M.~E.}}
(\byear{1952}).
\btitle{Rank analysis of incomplete block designs. I. {T}he method of paired comparisons}.
\bjournal{Biometrika}
\bvolume{39}
\bpages{324--345}.
\bid{issn={0006-3444}, mr={0070925}}
\end{barticle}
%
\bptok{imsref}%
\endbibitem

\bibitem{ccs}
\begin{barticle}[mr]
\bauthor{\bsnm{Cai},~\bfnm{Jian-Feng}\binits{J.-F.}},
\bauthor{\bsnm{Cand{\`e}s},~\bfnm{Emmanuel~J.}\binits{E.~J.}} \AND
\bauthor{\bsnm{Shen},~\bfnm{Zuowei}\binits{Z.}}
(\byear{2010}).
\btitle{A singular value thresholding algorithm for matrix completion}.
\bjournal{SIAM J. Optim.}
\bvolume{20}
\bpages{1956--1982}.
\bid{doi={10.1137/080738970}, issn={1052-6234}, mr={2600248}}
\end{barticle}
%
\bptok{imsref}%
\endbibitem

\bibitem{candesplan10}
\begin{barticle}[auto:parserefs-M02]
\bauthor{\bsnm{Cand{\`{e}}s},~\bfnm{E.~J.}\binits{E.~J.}} \AND
\bauthor{\bsnm{Plan},~\bfnm{Y.}\binits{Y.}}
(\byear{2010}).
\btitle{Matrix completion with noise}.
\bjournal{Proceedings of the IEEE}
\bvolume{98}
\bpages{925--936}.
\end{barticle}
%
\bptok{imsref}%
\endbibitem

\bibitem{candesrecht09}
\begin{barticle}[mr]
\bauthor{\bsnm{Cand{\`e}s},~\bfnm{Emmanuel~J.}\binits{E.~J.}} \AND
\bauthor{\bsnm{Recht},~\bfnm{Benjamin}\binits{B.}}
(\byear{2009}).
\btitle{Exact matrix completion via convex optimization}.
\bjournal{Found. Comput. Math.}
\bvolume{9}
\bpages{717--772}.
\bid{doi={10.1007/s10208-009-9045-5}, issn={1615-3375}, mr={2565240}}
\end{barticle}
%
\bptok{imsref}%
\endbibitem

\bibitem{candesrombergtao06}
\begin{barticle}[mr]
\bauthor{\bsnm{Cand{\`e}s},~\bfnm{Emmanuel~J.}\binits{E.~J.}},
\bauthor{\bsnm{Romberg},~\bfnm{Justin}\binits{J.}} \AND
\bauthor{\bsnm{Tao},~\bfnm{Terence}\binits{T.}}
(\byear{2006}).
\btitle{Robust uncertainty principles: Exact signal reconstruction from highly incomplete frequency information}.
\bjournal{IEEE Trans. Inform. Theory}
\bvolume{52}
\bpages{489--509}.
\bid{doi={10.1109/TIT.2005.862083}, issn={0018-9448}, mr={2236170}}
\end{barticle}
\bptok{imsref}%
\endbibitem

\bibitem{candestao10}
\begin{barticle}[mr]
\bauthor{\bsnm{Cand{\`e}s},~\bfnm{Emmanuel~J.}\binits{E.~J.}} \AND
\bauthor{\bsnm{Tao},~\bfnm{Terence}\binits{T.}}
(\byear{2010}).
\btitle{The power of convex relaxation: Near-optimal matrix completion}.
\bjournal{IEEE Trans. Inform. Theory}
\bvolume{56}
\bpages{2053--2080}.
\bid{doi={10.1109/TIT.2010.2044061}, issn={0018-9448}, mr={2723472}}
\end{barticle}
%
\bptok{imsref}%
\endbibitem

\bibitem{carondoucet12}
\begin{barticle}[mr]
\bauthor{\bsnm{Caron},~\bfnm{Fran{\c{c}}ois}\binits{F.}} \AND
\bauthor{\bsnm{Doucet},~\bfnm{Arnaud}\binits{A.}}
(\byear{2012}).
\btitle{Efficient {B}ayesian inference for generalized {B}radley--{T}erry models}.
\bjournal{J. Comput. Graph. Statist.}
\bvolume{21}
\bpages{174--196}.
\bid{doi={10.1080/10618600.2012.638220}, issn={1061-8600}, mr={2913362}}
\end{barticle}
%
\bptok{imsref}%
\endbibitem

\bibitem{cd3}
\begin{barticle}[mr]
\bauthor{\bsnm{Chatterjee},~\bfnm{Sourav}\binits{S.}} \AND
\bauthor{\bsnm{Diaconis},~\bfnm{Persi}\binits{P.}}
(\byear{2013}).
\btitle{Estimating and understanding exponential random graph models}.
\bjournal{Ann. Statist.}
\bvolume{41}
\bpages{2428--2461}.
\bid{doi={10.1214/13-AOS1155}, issn={0090-5364}, mr={3127871}}
\bptnote{check year}%
\end{barticle}
%
\bptok{imsref}%
\endbibitem

\bibitem{cds}
\begin{barticle}[mr]
\bauthor{\bsnm{Chatterjee},~\bfnm{Sourav}\binits{S.}},
\bauthor{\bsnm{Diaconis},~\bfnm{Persi}\binits{P.}} \AND
\bauthor{\bsnm{Sly},~\bfnm{Allan}\binits{A.}}
(\byear{2011}).
\btitle{Random graphs with a given degree sequence}.
\bjournal{Ann. Appl. Probab.}
\bvolume{21}
\bpages{1400--1435}.
\bid{doi={10.1214/10-AAP728}, issn={1050-5164}, mr={2857452}}
\end{barticle}
%
\bptok{imsref}%
\endbibitem

\bibitem{cv}
\begin{barticle}[mr]
\bauthor{\bsnm{Chatterjee},~\bfnm{Sourav}\binits{S.}} \AND
\bauthor{\bsnm{Varadhan},~\bfnm{S.~R.~S.}\binits{S.~R.~S.}}
(\byear{2011}).
\btitle{The large deviation principle for the {E}rd\H{o}s--R\'enyi random graph}.
\bjournal{European J. Combin.}
\bvolume{32}
\bpages{1000--1017}.
\bid{doi={10.1016/j.ejc.2011.03.014}, issn={0195-6698}, mr={2825532}}
\end{barticle}
%
\bptok{imsref}%
\endbibitem

\bibitem{cv2}
\begin{barticle}[mr]
\bauthor{\bsnm{Chatterjee},~\bfnm{Sourav}\binits{S.}} \AND
\bauthor{\bsnm{Varadhan},~\bfnm{S.~R.~S.}\binits{S.~R.~S.}}
(\byear{2012}).
\btitle{Large deviations for random matrices}.
\bjournal{Commun. Stoch. Anal.}
\bvolume{6}
\bpages{1--13}.
\bid{issn={0973-9599}, mr={2890846}}
\end{barticle}
%
\bptok{imsref}%
\endbibitem

\bibitem{chaudhurietal12}
\begin{barticle}[auto:parserefs-M02]
\bauthor{\bsnm{Chaudhuri},~\bfnm{K.}\binits{K.}},
\bauthor{\bsnm{Chung},~\bfnm{F.}\binits{F.}} \AND
\bauthor{\bsnm{Tsiatas},~\bfnm{A.}\binits{A.}}
(\byear{2012}).
\btitle{Spectral clustering of graphs with general degrees in the extended planted partition model}.
\bjournal{J. Mach. Learn. Res.}
\bvolume{35}
\bpages{1--23}.
\end{barticle}
%
\bptok{imsref}%
\endbibitem

\bibitem{choi}
\begin{barticle}[mr]
\bauthor{\bsnm{Choi},~\bfnm{David}\binits{D.}} \AND
\bauthor{\bsnm{Wolfe},~\bfnm{Patrick~J.}\binits{P.~J.}}
(\byear{2014}).
\btitle{Co-clustering separately exchangeable network data}.
\bjournal{Ann. Statist.}
\bvolume{42}
\bpages{29--63}.
\bid{doi={10.1214/13-AOS1173}, issn={0090-5364}, mr={3161460}}
\bptnote{check year}%
\end{barticle}
%
\bptok{imsref}%
\endbibitem

\bibitem{cwa}
\begin{barticle}[mr]
\bauthor{\bsnm{Choi},~\bfnm{D.~S.}\binits{D.~S.}},
\bauthor{\bsnm{Wolfe},~\bfnm{P.~J.}\binits{P.~J.}} \AND
\bauthor{\bsnm{Airoldi},~\bfnm{E.~M.}\binits{E.~M.}}
(\byear{2012}).
\btitle{Stochastic blockmodels with a growing number of classes}.
\bjournal{Biometrika}
\bvolume{99}
\bpages{273--284}.
\bid{doi={10.1093/biomet/asr053}, issn={0006-3444}, mr={2931253}}
\end{barticle}
%
\bptok{imsref}%
\endbibitem

\bibitem{condonkarp01}
\begin{barticle}[mr]
\bauthor{\bsnm{Condon},~\bfnm{Anne}\binits{A.}} \AND
\bauthor{\bsnm{Karp},~\bfnm{Richard~M.}\binits{R.~M.}}
(\byear{2001}).
\btitle{Algorithms for graph partitioning on the planted partition model}.
\bjournal{Random Structures Algorithms}
\bvolume{18}
\bpages{116--140}.
\bid{doi={10.1002/1098-2418(200103)18:2<116::AID-RSA1001>3.0.CO;2-2}, issn={1042-9832}, mr={1809718}}
\end{barticle}
\bptok{imsref}%
\endbibitem

\bibitem{davenport}
\begin{bunpublished}[auto:parserefs-M02]
\bauthor{\bsnm{Davenport},~\bfnm{M.~A.}\binits{M.~A.}},
\bauthor{\bsnm{Plan},~\bfnm{Y.}\binits{Y.}},
\bauthor{\bsnm{van~den Berg},~\bfnm{E.}\binits{E.}} \AND
\bauthor{\bsnm{Wootters},~\bfnm{M.}\binits{M.}}
(\byear{2012}).
\btitle{1-bit matrix completion}.
\bnote{Preprint. Available at \arxivurl{arXiv:1209.3672}.}
\end{bunpublished}
%
\bptok{imsref}%
\endbibitem

\bibitem{david88}
\begin{bbook}[mr]
\bauthor{\bsnm{David},~\bfnm{H.~A.}\binits{H.~A.}}
(\byear{1988}).
\btitle{The Method of Paired Comparisons},
\bedition{2nd} ed.
\bseries{Griffin's Statistical Monographs \& Courses}
\bvolume{41}.
\bpublisher{Oxford Univ. Press},
\blocation{London}.
\bid{mr={0947340}}
\end{bbook}
\bptok{imsref}%
\endbibitem

\bibitem{df76}
\begin{barticle}[mr]
\bauthor{\bsnm{Davidson},~\bfnm{Roger~R.}\binits{R.~R.}} \AND
\bauthor{\bsnm{Farquhar},~\bfnm{Peter~H.}\binits{P.~H.}}
(\byear{1976}).
\btitle{A bibliography on the method of paired comparisons}.
\bjournal{Biometrics}
\bvolume{32}
\bpages{241--252}.
\bid{issn={0006-341X}, mr={0408134}}
\end{barticle}
%
\bptok{imsref}%
\endbibitem

\bibitem{diaconis88}
\begin{bbook}[mr]
\bauthor{\bsnm{Diaconis},~\bfnm{Persi}\binits{P.}}
(\byear{1988}).
\btitle{Group Representations in Probability and Statistics}.
\bseries{Institute of Mathematical Statistics Lecture Notes---Monograph Series}
\bvolume{11}.
\bpublisher{IMS},
\blocation{Hayward, CA}.
\bid{mr={0964069}}
\end{bbook}
%
\bptok{imsref}%
\endbibitem

\bibitem{diaconisjanson08}
\begin{barticle}[mr]
\bauthor{\bsnm{Diaconis},~\bfnm{Persi}\binits{P.}} \AND
\bauthor{\bsnm{Janson},~\bfnm{Svante}\binits{S.}}
(\byear{2008}).
\btitle{Graph limits and exchangeable random graphs}.
\bjournal{Rend. Mat. Appl. (7)}
\bvolume{28}
\bpages{33--61}.
\bid{issn={1120-7183}, mr={2463439}}
\end{barticle}
%
\bptok{imsref}%
\endbibitem

\bibitem{donoho06}
\begin{barticle}[mr]
\bauthor{\bsnm{Donoho},~\bfnm{David~L.}\binits{D.~L.}}
(\byear{2006}).
\btitle{Compressed sensing}.
\bjournal{IEEE Trans. Inform. Theory}
\bvolume{52}
\bpages{1289--1306}.
\bid{doi={10.1109/TIT.2006.871582}, issn={0018-9448}, mr={2241189}}
\end{barticle}
\bptok{imsref}%
\endbibitem

\bibitem{dj95}
\begin{barticle}[mr]
\bauthor{\bsnm{Donoho},~\bfnm{David~L.}\binits{D.~L.}} \AND
\bauthor{\bsnm{Johnstone},~\bfnm{Iain~M.}\binits{I.~M.}}
(\byear{1995}).
\btitle{Adapting to unknown smoothness via wavelet shrinkage}.
\bjournal{J. Amer. Statist. Assoc.}
\bvolume{90}
\bpages{1200--1224}.
\bid{issn={0162-1459}, mr={1379464}}
\end{barticle}
%
\bptok{imsref}%
\endbibitem

\bibitem{fazel02}
\begin{bmisc}[auto:parserefs-M02]
\bauthor{\bsnm{Fazel},~\bfnm{M.}\binits{M.}}
(\byear{2002}).
\bhowpublished{Matrix rank minimization with applications.
Ph.D. thesis,
Stanford Univ., Stanford, CA.}
\end{bmisc}
%
\bptok{imsref}%
\endbibitem

\bibitem{furedikomlos81}
\begin{barticle}[mr]
\bauthor{\bsnm{F{\"u}redi},~\bfnm{Z.}\binits{Z.}} \AND
\bauthor{\bsnm{Koml{\'o}s},~\bfnm{J.}\binits{J.}}
(\byear{1981}).
\btitle{The eigenvalues of random symmetric matrices}.
\bjournal{Combinatorica}
\bvolume{1}
\bpages{233--241}.
\bid{doi={10.1007/BF02579329}, issn={0209-9683}, mr={0637828}}
\end{barticle}
%
\bptok{imsref}%
\endbibitem

\bibitem{dg}
\begin{barticle}[mr]
\bauthor{\bsnm{Gavish},~\bfnm{Matan}\binits{M.}} \AND
\bauthor{\bsnm{Donoho},~\bfnm{David~L.}\binits{D.~L.}}
(\byear{2014}).
\btitle{The optimal hard threshold for singular values is {$4/\sqrt 3$}}.
\bjournal{IEEE Trans. Inform. Theory}
\bvolume{60}
\bpages{5040--5053}.
\bid{issn={0018-9448}, mr={3245370}}
\bptnote{check year}%
\end{barticle}
%
\bptok{imsref}%
\endbibitem

\bibitem{gormleymurphy08}
\begin{barticle}[mr]
\bauthor{\bsnm{Gormley},~\bfnm{Isobel~Claire}\binits{I.~C.}} \AND
\bauthor{\bsnm{Murphy},~\bfnm{Thomas~Brendan}\binits{T.~B.}}
(\byear{2008}).
\btitle{Exploring voting blocs within the {I}rish electorate: A mixture modeling approach}.
\bjournal{J. Amer. Statist. Assoc.}
\bvolume{103}
\bpages{1014--1027}.
\bid{doi={10.1198/016214507000001049}, issn={0162-1459}, mr={2528824}}
\end{barticle}
%
\bptok{imsref}%
\endbibitem

\bibitem{gormleymurphy09}
\begin{barticle}[mr]
\bauthor{\bsnm{Gormley},~\bfnm{Isobel~Claire}\binits{I.~C.}} \AND
\bauthor{\bsnm{Murphy},~\bfnm{Thomas~Brendan}\binits{T.~B.}}
(\byear{2009}).
\btitle{A grade of membership model for rank data}.
\bjournal{Bayesian Anal.}
\bvolume{4}
\bpages{265--295}.
\bid{doi={10.1214/09-BA410}, issn={1936-0975}, mr={2507364}}
\end{barticle}
%
\bptok{imsref}%
\endbibitem

\bibitem{goruretal06}
\begin{binproceedings}[auto:parserefs-M02]
\bauthor{\bsnm{G{\"{o}}r{\"{u}}r},~\bfnm{D.}\binits{D.}},
\bauthor{\bsnm{J{\"{a}}kel},~\bfnm{F.}\binits{F.}} \AND
\bauthor{\bsnm{Rasmussen},~\bfnm{C.~E.}\binits{C.~E.}}
(\byear{2006}).
\btitle{A choice model with infinitely many latent features}.
In \bbooktitle{Proceedings of the 23rd Annual International Conference on Machine Learning}
\bpages{361--368}.
\bpublisher{ACM},
\blocation{New York}.
\end{binproceedings}
%
\bptok{imsref}%
\endbibitem

\bibitem{groneetal84}
\begin{barticle}[mr]
\bauthor{\bsnm{Grone},~\bfnm{Robert}\binits{R.}},
\bauthor{\bsnm{Johnson},~\bfnm{Charles~R.}\binits{C.~R.}},
\bauthor{\bparticle{de} \bsnm{S{\'a}},~\bfnm{Eduardo~M.}\binits{E.~M.}} \AND
\bauthor{\bsnm{Wolkowicz},~\bfnm{Henry}\binits{H.}}
(\byear{1984}).
\btitle{Positive definite completions of partial {H}ermitian matrices}.
\bjournal{Linear Algebra Appl.}
\bvolume{58}
\bpages{109--124}.
\bid{doi={10.1016/0024-3795(84)90207-6}, issn={0024-3795}, mr={0739282}}
\end{barticle}
%
\bptok{imsref}%
\endbibitem

\bibitem{guiversnelson09}
\begin{binproceedings}[auto:parserefs-M02]
\bauthor{\bsnm{Guiver},~\bfnm{J.}\binits{J.}} \AND
\bauthor{\bsnm{Snelson},~\bfnm{E.}\binits{E.}}
(\byear{2009}).
\btitle{Bayesian inference for Plackett--Luce ranking models}.
In \bbooktitle{Proceedings of the 26th Annual International Conference on Machine Learning}
\bpages{377--384}.
\bpublisher{ACM},
\blocation{New York}.
\end{binproceedings}
%
\bptok{imsref}%
\endbibitem

\bibitem{hrt07}
\begin{barticle}[mr]
\bauthor{\bsnm{Handcock},~\bfnm{Mark~S.}\binits{M.~S.}},
\bauthor{\bsnm{Raftery},~\bfnm{Adrian~E.}\binits{A.~E.}} \AND
\bauthor{\bsnm{Tantrum},~\bfnm{Jeremy~M.}\binits{J.~M.}}
(\byear{2007}).
\btitle{Model-based clustering for social networks}.
\bjournal{J. Roy. Statist. Soc. Ser. A}
\bvolume{170}
\bpages{301--354}.
\bid{doi={10.1111/j.1467-985X.2007.00471.x}, issn={0964-1998}, mr={2364300}}
\end{barticle}
%
\bptok{imsref}%
\endbibitem

\bibitem{ht98}
\begin{barticle}[mr]
\bauthor{\bsnm{Hastie},~\bfnm{Trevor}\binits{T.}} \AND
\bauthor{\bsnm{Tibshirani},~\bfnm{Robert}\binits{R.}}
(\byear{1998}).
\btitle{Classification by pairwise coupling}.
\bjournal{Ann. Statist.}
\bvolume{26}
\bpages{451--471}.
\bid{doi={10.1214/aos/1028144844}, issn={0090-5364}, mr={1626055}}
\end{barticle}
%
\bptok{imsref}%
\endbibitem

\bibitem{hrh02}
\begin{barticle}[mr]
\bauthor{\bsnm{Hoff},~\bfnm{Peter~D.}\binits{P.~D.}},
\bauthor{\bsnm{Raftery},~\bfnm{Adrian~E.}\binits{A.~E.}} \AND
\bauthor{\bsnm{Handcock},~\bfnm{Mark~S.}\binits{M.~S.}}
(\byear{2002}).
\btitle{Latent space approaches to social network analysis}.
\bjournal{J. Amer. Statist. Assoc.}
\bvolume{97}
\bpages{1090--1098}.
\bid{doi={10.1198/016214502388618906}, issn={0162-1459}, mr={1951262}}
\end{barticle}
%
\bptok{imsref}%
\endbibitem

\bibitem{hll83}
\begin{barticle}[mr]
\bauthor{\bsnm{Holland},~\bfnm{Paul~W.}\binits{P.~W.}},
\bauthor{\bsnm{Laskey},~\bfnm{Kathryn~Blackmond}\binits{K.~B.}} \AND
\bauthor{\bsnm{Leinhardt},~\bfnm{Samuel}\binits{S.}}
(\byear{1983}).
\btitle{Stochastic blockmodels: First steps}.
\bjournal{Social Networks}
\bvolume{5}
\bpages{109--137}.
\bid{doi={10.1016/0378-8733(83)90021-7}, issn={0378-8733}, mr={0718088}}
\end{barticle}
%
\bptok{imsref}%
\endbibitem

\bibitem{hoover82}
\begin{bincollection}[mr]
\bauthor{\bsnm{Hoover},~\bfnm{D.~N.}\binits{D.~N.}}
(\byear{1982}).
\btitle{Row-column exchangeability and a generalized model for probability}.
In \bbooktitle{Exchangeability in Probability and Statistics ({R}ome, 1981)}
\bpages{281--291}.
\bpublisher{North-Holland},
\blocation{Amsterdam}.
\bid{mr={0675982}}
\end{bincollection}
%
\bptok{imsref}%
\endbibitem

\bibitem{huangetal06}
\begin{barticle}[mr]
\bauthor{\bsnm{Huang},~\bfnm{Tzu-Kuo}\binits{T.-K.}},
\bauthor{\bsnm{Weng},~\bfnm{Ruby~C.}\binits{R.~C.}} \AND
\bauthor{\bsnm{Lin},~\bfnm{Chih-Jen}\binits{C.-J.}}
(\byear{2006}).
\btitle{Generalized {B}radley--{T}erry models and multi-class probability estimates}.
\bjournal{J. Mach. Learn. Res.}
\bvolume{7}
\bpages{85--115}.
\bid{issn={1532-4435}, mr={2274363}}
\end{barticle}
%
\bptok{imsref}%
\endbibitem

\bibitem{hunter04}
\begin{barticle}[mr]
\bauthor{\bsnm{Hunter},~\bfnm{David~R.}\binits{D.~R.}}
(\byear{2004}).
\btitle{M{M} algorithms for generalized {B}radley--{T}erry models}.
\bjournal{Ann. Statist.}
\bvolume{32}
\bpages{384--406}.
\bid{doi={10.1214/aos/1079120141}, issn={0090-5364}, mr={2051012}}
\end{barticle}
%
\bptok{imsref}%
\endbibitem

\bibitem{jm11}
\begin{binproceedings}[auto:parserefs-M02]
\bauthor{\bsnm{Javanmard},~\bfnm{A.}\binits{A.}} \AND
\bauthor{\bsnm{Montanari},~\bfnm{A.}\binits{A.}}
(\byear{2011}).
\btitle{Localization from incomplete noisy distance measurements}.
In \bbooktitle{2011 IEEE International Symposium on Information Theory Proceedings (ISIT)}
\bpages{1584--1588}.
\bpublisher{IEEE},
\blocation{New York}.
\end{binproceedings}
%
\bptok{imsref}%
\endbibitem

\bibitem{johnson90}
\begin{bincollection}[mr]
\bauthor{\bsnm{Johnson},~\bfnm{Charles~R.}\binits{C.~R.}}
(\byear{1990}).
\btitle{Matrix completion problems: A survey}.
In \bbooktitle{Matrix Theory and Applications ({P}hoenix, AZ, 1989)}.
\bseries{Proc. Sympos. Appl. Math.}
\bvolume{40}
\bpages{171--198}.
\bpublisher{Amer. Math. Soc.},
\blocation{Providence, RI}.
\bid{doi={10.1090/psapm/040/1059486}, mr={1059486}}
\end{bincollection}
%
\bptok{imsref}%
\endbibitem

\bibitem{kmo10a}
\begin{barticle}[mr]
\bauthor{\bsnm{Keshavan},~\bfnm{Raghunandan~H.}\binits{R.~H.}},
\bauthor{\bsnm{Montanari},~\bfnm{Andrea}\binits{A.}} \AND
\bauthor{\bsnm{Oh},~\bfnm{Sewoong}\binits{S.}}
(\byear{2010}).
\btitle{Matrix completion from noisy entries}.
\bjournal{J. Mach. Learn. Res.}
\bvolume{11}
\bpages{2057--2078}.
\bid{issn={1532-4435}, mr={2678022}}
\end{barticle}
%
\bptok{imsref}%
\endbibitem

\bibitem{kmo10b}
\begin{barticle}[mr]
\bauthor{\bsnm{Keshavan},~\bfnm{Raghunandan~H.}\binits{R.~H.}},
\bauthor{\bsnm{Montanari},~\bfnm{Andrea}\binits{A.}} \AND
\bauthor{\bsnm{Oh},~\bfnm{Sewoong}\binits{S.}}
(\byear{2010}).
\btitle{Matrix completion from a few entries}.
\bjournal{IEEE Trans. Inform. Theory}
\bvolume{56}
\bpages{2980--2998}.
\bid{doi={10.1109/TIT.2010.2046205}, issn={0018-9448}, mr={2683452}}
\end{barticle}
%
\bptok{imsref}%
\endbibitem

\bibitem{kol2012}
\begin{barticle}[mr]
\bauthor{\bsnm{Koltchinskii},~\bfnm{Vladimir}\binits{V.}}
(\byear{2011}).
\btitle{Von {N}eumann entropy penalization and low-rank matrix estimation}.
\bjournal{Ann. Statist.}
\bvolume{39}
\bpages{2936--2973}.
\bid{doi={10.1214/11-AOS926}, issn={0090-5364}, mr={3012397}}
\bptnote{check year}%
\end{barticle}
%
\bptok{imsref}%
\endbibitem

\bibitem{klt}
\begin{barticle}[mr]
\bauthor{\bsnm{Koltchinskii},~\bfnm{Vladimir}\binits{V.}},
\bauthor{\bsnm{Lounici},~\bfnm{Karim}\binits{K.}} \AND
\bauthor{\bsnm{Tsybakov},~\bfnm{Alexandre~B.}\binits{A.~B.}}
(\byear{2011}).
\btitle{Nuclear-norm penalization and optimal rates for noisy low-rank matrix completion}.
\bjournal{Ann. Statist.}
\bvolume{39}
\bpages{2302--2329}.
\bid{doi={10.1214/11-AOS894}, issn={0090-5364}, mr={2906869}}
\end{barticle}
%
\bptok{imsref}%
\endbibitem

\bibitem{langeetal00}
\begin{barticle}[mr]
\bauthor{\bsnm{Lange},~\bfnm{Kenneth}\binits{K.}},
\bauthor{\bsnm{Hunter},~\bfnm{David~R.}\binits{D.~R.}} \AND
\bauthor{\bsnm{Yang},~\bfnm{Ilsoon}\binits{I.}}
(\byear{2000}).
\btitle{Optimization transfer using surrogate objective functions}.
\bjournal{J. Comput. Graph. Statist.}
\bvolume{9}
\bpages{1--59}.
\bid{doi={10.2307/1390605}, issn={1061-8600}, mr={1819865}}
\end{barticle}
%
\bptok{imsref}%
\endbibitem

\bibitem{leskovecetal08}
\begin{binproceedings}[auto:parserefs-M02]
\bauthor{\bsnm{Leskovec},~\bfnm{J.}\binits{J.}},
\bauthor{\bsnm{Lang},~\bfnm{K.~J.}\binits{K.~J.}},
\bauthor{\bsnm{Dasgupta},~\bfnm{A.}\binits{A.}} \AND
\bauthor{\bsnm{Mahoney},~\bfnm{M.~W.}\binits{M.~W.}}
(\byear{2008}).
\btitle{Statistical properties of community structure in large social and information networks}.
In \bbooktitle{Proceeding of the 17th International Conference on World Wide Web}
\bpages{695--704}.
\bpublisher{ACM},
\blocation{Beijing, China}.
\end{binproceedings}
%
\bptok{imsref}%
\endbibitem

\bibitem{lovaszbook}
\begin{bbook}[mr]
\bauthor{\bsnm{Lov{\'a}sz},~\bfnm{L{\'a}szl{\'o}}\binits{L.}}
(\byear{2012}).
\btitle{Large Networks and Graph Limits}.
\bseries{American Mathematical Society Colloquium Publications}
\bvolume{60}.
\bpublisher{Amer. Math. Soc.},
\blocation{Providence, RI}.
\bid{mr={3012035}}
\end{bbook}
%
\bptok{imsref}%
\endbibitem

\bibitem{lovaszszegedy06}
\begin{barticle}[mr]
\bauthor{\bsnm{Lov{\'a}sz},~\bfnm{L{\'a}szl{\'o}}\binits{L.}} \AND
\bauthor{\bsnm{Szegedy},~\bfnm{Bal{\'a}zs}\binits{B.}}
(\byear{2006}).
\btitle{Limits of dense graph sequences}.
\bjournal{J. Combin. Theory Ser. B}
\bvolume{96}
\bpages{933--957}.
\bid{doi={10.1016/j.jctb.2006.05.002}, issn={0095-8956}, mr={2274085}}
\end{barticle}
%
\bptok{imsref}%
\endbibitem

\bibitem{lubetzky12}
\begin{bmisc}[auto]
\bauthor{\bsnm{Lubetzky},~\bfnm{E.}\binits{E.}} \AND
\bauthor{\bsnm{Zhao},~\bfnm{Y.}\binits{Y.}}
(\byear{2012}).
\bhowpublished{On
replica symmetry of large deviations in random graphs.
Preprint. Available at \arxivurl{arXiv:1210.7013}.}
\end{bmisc}
%
\bptok{imsref}%
\endbibitem

\bibitem{luce59}
\begin{bbook}[mr]
\bauthor{\bsnm{Luce},~\bfnm{R.~Duncan}\binits{R.~D.}}
(\byear{1959}).
\btitle{Individual Choice Behavior: A Theoretical Analysis}.
\bpublisher{Wiley},
\blocation{New York}.
\bid{mr={0108411}}
\end{bbook}
%
\bptok{imsref}%
\endbibitem

\bibitem{luce77}
\begin{barticle}[mr]
\bauthor{\bsnm{Luce},~\bfnm{R.~Duncan}\binits{R.~D.}}
(\byear{1977}).
\btitle{The choice axiom after twenty years}.
\bjournal{J. Math. Psych.}
\bvolume{15}
\bpages{215--233}.
\bid{mr={0462675}}
\end{barticle}
%
\bptok{imsref}%
\endbibitem

\bibitem{mht}
\begin{barticle}[mr]
\bauthor{\bsnm{Mazumder},~\bfnm{Rahul}\binits{R.}},
\bauthor{\bsnm{Hastie},~\bfnm{Trevor}\binits{T.}} \AND
\bauthor{\bsnm{Tibshirani},~\bfnm{Robert}\binits{R.}}
(\byear{2010}).
\btitle{Spectral regularization algorithms for learning large incomplete matrices}.
\bjournal{J. Mach. Learn. Res.}
\bvolume{11}
\bpages{2287--2322}.
\bid{issn={1532-4435}, mr={2719857}}
\end{barticle}
%
\bptok{imsref}%
\endbibitem

\bibitem{mossel12}
\begin{bunpublished}[auto:parserefs-M02]
\bauthor{\bsnm{Mossel},~\bfnm{E.}\binits{E.}},
\bauthor{\bsnm{Neeman},~\bfnm{J.}\binits{J.}} \AND
\bauthor{\bsnm{Sly},~\bfnm{A.}\binits{A.}}
(\byear{2012}).
\btitle{Stochastic block models and reconstruction}.
\bnote{Preprint. Available at \arxivurl{arXiv:1202.1499}.}
\end{bunpublished}
%
\bptok{imsref}%
\endbibitem

\bibitem{nadakuditi}
\begin{bunpublished}[auto:parserefs-M02]
\bauthor{\bsnm{Nadakuditi},~\bfnm{R.~R.}\binits{R.~R.}}
(\byear{2013}).
\btitle{OptShrink: An algorithm for improved low-rank signal matrix denoising by optimal, data-driven singular value shrinkage}.
\bnote{Preprint. Available at \arxivurl{arXiv:1306.6042}.}
\end{bunpublished}
%
\bptok{imsref}%
\endbibitem

\bibitem{negahban}
\begin{barticle}[mr]
\bauthor{\bsnm{Negahban},~\bfnm{Sahand}\binits{S.}} \AND
\bauthor{\bsnm{Wainwright},~\bfnm{Martin~J.}\binits{M.~J.}}
(\byear{2011}).
\btitle{Estimation of (near) low-rank matrices with noise and high-dimensional scaling}.
\bjournal{Ann. Statist.}
\bvolume{39}
\bpages{1069--1097}.
\bid{doi={10.1214/10-AOS850}, issn={0090-5364}, mr={2816348}}
\end{barticle}
%
\bptok{imsref}%
\endbibitem

\bibitem{sn01}
\begin{barticle}[mr]
\bauthor{\bsnm{Nowicki},~\bfnm{Krzysztof}\binits{K.}} \AND
\bauthor{\bsnm{Snijders},~\bfnm{Tom~A.~B.}\binits{T.~A.~B.}}
(\byear{2001}).
\btitle{Estimation and prediction for stochastic blockstructures}.
\bjournal{J. Amer. Statist. Assoc.}
\bvolume{96}
\bpages{1077--1087}.
\bid{doi={10.1198/016214501753208735}, issn={0162-1459}, mr={1947255}}
\end{barticle}
\bptok{imsref}%
\endbibitem

\bibitem{ohetal10}
\begin{bincollection}[auto:parserefs-M02]
\bauthor{\bsnm{Oh},~\bfnm{S.}\binits{S.}},
\bauthor{\bsnm{Montanari},~\bfnm{A.}\binits{A.}} \AND
\bauthor{\bsnm{Karbasi},~\bfnm{A.}\binits{A.}}
(\byear{2010}).
\btitle{Sensor network localization from local connectivity: Performance analysis for the MDS-MAP algorithm}.
In \bbooktitle{Information Theory Workshop (ITW)}
\bpages{1--5}.
\bpublisher{IEEE},
\blocation{New York}.
\end{bincollection}
%
\bptok{imsref}%
\endbibitem

\bibitem{oliveira09}
\begin{bunpublished}[auto:parserefs-M02]
\bauthor{\bsnm{Oliveira},~\bfnm{R.~I.}\binits{R.~I.}}
(\byear{2009}).
\btitle{Concentration of the adjacency matrix and of the Laplacian in random graphs with independent edges}.
\bnote{Preprint. Available at \arxivurl{arXiv:0911.0600}.}
\end{bunpublished}
%
\bptok{imsref}%
\endbibitem

\bibitem{plackett75}
\begin{barticle}[mr]
\bauthor{\bsnm{Plackett},~\bfnm{R.~L.}\binits{R.~L.}}
(\byear{1975}).
\btitle{The analysis of permutations}.
\bjournal{J. R. Stat. Soc. Ser. C. Appl. Stat.}
\bvolume{24}
\bpages{193--202}.
\bid{issn={0035-9254}, mr={0391338}}
\end{barticle}
%
\bptok{imsref}%
\endbibitem

\bibitem{radinyin}
\begin{barticle}[mr]
\bauthor{\bsnm{Radin},~\bfnm{Charles}\binits{C.}} \AND
\bauthor{\bsnm{Yin},~\bfnm{Mei}\binits{M.}}
(\byear{2013}).
\btitle{Phase transitions in exponential random graphs}.
\bjournal{Ann. Appl. Probab.}
\bvolume{23}
\bpages{2458--2471}.
\bid{doi={10.1214/12-AAP907}, issn={1050-5164}, mr={3127941}}
\bptnote{check year}%
\end{barticle}
%
\bptok{imsref}%
\endbibitem

\bibitem{raokupper67}
\begin{barticle}[mr]
\bauthor{\bsnm{Rao},~\bfnm{P.~V.}\binits{P.~V.}} \AND
\bauthor{\bsnm{Kupper},~\bfnm{L.~L.}\binits{L.~L.}}
(\byear{1967}).
\btitle{Ties in paired-comparison experiments: A generalization of the {B}radley--{T}erry model}.
\bjournal{J. Amer. Statist. Assoc.}
\bvolume{62}
\bpages{194--204}.
\bid{issn={0162-1459}, mr={0217963}}
\end{barticle}
%
\bptok{imsref}%
\endbibitem

\bibitem{renniesrebro05}
\begin{binproceedings}[auto:parserefs-M02]
\bauthor{\bsnm{Rennie},~\bfnm{J.~D.}\binits{J.~D.}} \AND
\bauthor{\bsnm{Srebro},~\bfnm{N.}\binits{N.}}
(\byear{2005}).
\btitle{Fast maximum margin matrix factorization for collaborative prediction}.
In \bbooktitle{Proceedings of the 22nd International Conference on Machine Learning}
\bpages{713--719}.
\bpublisher{ACM},
\blocation{New York}.
\end{binproceedings}
%
\bptok{imsref}%
\endbibitem

\bibitem{rohdetsybakov11}
\begin{barticle}[mr]
\bauthor{\bsnm{Rohde},~\bfnm{Angelika}\binits{A.}} \AND
\bauthor{\bsnm{Tsybakov},~\bfnm{Alexandre~B.}\binits{A.~B.}}
(\byear{2011}).
\btitle{Estimation of high-dimensional low-rank matrices}.
\bjournal{Ann. Statist.}
\bvolume{39}
\bpages{887--930}.
\bid{doi={10.1214/10-AOS860}, issn={0090-5364}, mr={2816342}}
\end{barticle}
%
\bptok{imsref}%
\endbibitem

\bibitem{rcy}
\begin{barticle}[mr]
\bauthor{\bsnm{Rohe},~\bfnm{Karl}\binits{K.}},
\bauthor{\bsnm{Chatterjee},~\bfnm{Sourav}\binits{S.}} \AND
\bauthor{\bsnm{Yu},~\bfnm{Bin}\binits{B.}}
(\byear{2011}).
\btitle{Spectral clustering and the high-dimensional stochastic blockmodel}.
\bjournal{Ann. Statist.}
\bvolume{39}
\bpages{1878--1915}.
\bid{doi={10.1214/11-AOS887}, issn={0090-5364}, mr={2893856}}
\end{barticle}
%
\bptok{imsref}%
\endbibitem

\bibitem{roheetal12}
\begin{bunpublished}[auto:parserefs-M02]
\bauthor{\bsnm{Rohe},~\bfnm{K.}\binits{K.}},
\bauthor{\bsnm{Qin},~\bfnm{T.}\binits{T.}} \AND
\bauthor{\bsnm{Fan},~\bfnm{H.}\binits{H.}}
(\byear{2012}).
\btitle{The highest dimensional Stochastic Blockmodel with a regularized estimator}.
\bnote{Preprint. Available at \arxivurl{arXiv:1206.2380}.}
\end{bunpublished}
\bptok{imsref}%
\endbibitem

\bibitem{rv07}
\begin{barticle}[mr]
\bauthor{\bsnm{Rudelson},~\bfnm{Mark}\binits{M.}} \AND
\bauthor{\bsnm{Vershynin},~\bfnm{Roman}\binits{R.}}
(\byear{2007}).
\btitle{Sampling from large matrices: An approach through geometric functional analysis}.
\bjournal{J. ACM}
\bvolume{54}
\bpages{Art. 21, 19 pp. (electronic)}.
\bid{doi={10.1145/1255443.1255449}, issn={0004-5411}, mr={2351844}}
\end{barticle}
%
\bptok{imsref}%
\endbibitem

\bibitem{simonsyao99}
\begin{barticle}[mr]
\bauthor{\bsnm{Simons},~\bfnm{Gordon}\binits{G.}} \AND
\bauthor{\bsnm{Yao},~\bfnm{Yi-Ching}\binits{Y.-C.}}
(\byear{1999}).
\btitle{Asymptotics when the number of parameters tends to infinity in the {B}radley--{T}erry model for paired comparisons}.
\bjournal{Ann. Statist.}
\bvolume{27}
\bpages{1041--1060}.
\bid{doi={10.1214/aos/1018031267}, issn={0090-5364}, mr={1724040}}
\end{barticle}
%
\bptok{imsref}%
\endbibitem

\bibitem{singer08}
\begin{barticle}[mr]
\bauthor{\bsnm{Singer},~\bfnm{Amit}\binits{A.}}
(\byear{2008}).
\btitle{A remark on global positioning from local distances}.
\bjournal{Proc. Natl. Acad. Sci. USA}
\bvolume{105}
\bpages{9507--9511}.
\bid{doi={10.1073/pnas.0709842104}, issn={1091-6490}, mr={2430205}}
\end{barticle}
%
\bptok{imsref}%
\endbibitem

\bibitem{singer10}
\begin{barticle}[mr]
\bauthor{\bsnm{Singer},~\bfnm{Amit}\binits{A.}} \AND
\bauthor{\bsnm{Cucuringu},~\bfnm{Mihai}\binits{M.}}
(\byear{2009/10}).
\btitle{Uniqueness of low-rank matrix completion by rigidity theory}.
\bjournal{SIAM J. Matrix Anal. Appl.}
\bvolume{31}
\bpages{1621--1641}.
\bid{doi={10.1137/090750688}, issn={0895-4798}, mr={2595541}}
\bptnote{check year}%
\end{barticle}
%
\bptok{imsref}%
\endbibitem

\bibitem{sn97}
\begin{barticle}[mr]
\bauthor{\bsnm{Snijders},~\bfnm{Tom~A.~B.}\binits{T.~A.~B.}} \AND
\bauthor{\bsnm{Nowicki},~\bfnm{Krzysztof}\binits{K.}}
(\byear{1997}).
\btitle{Estimation and prediction for stochastic blockmodels for graphs with latent block structure}.
\bjournal{J. Classification}
\bvolume{14}
\bpages{75--100}.
\bid{doi={10.1007/s003579900004}, issn={0176-4268}, mr={1449742}}
\end{barticle}
\bptok{imsref}%
\endbibitem

\bibitem{sd74}
\begin{barticle}[auto:parserefs-M02]
\bauthor{\bsnm{Spence},~\bfnm{I.}\binits{I.}} \AND
\bauthor{\bsnm{Domoney},~\bfnm{D.}\binits{D.}}
(\byear{1974}).
\btitle{Single subject incomplete designs for nonmetric multidimensional scaling}.
\bjournal{Psychometrika}
\bvolume{39}
\bpages{469--490}.
\end{barticle}
%
\bptok{imsref}%
\endbibitem

\bibitem{talagrand96}
\begin{barticle}[mr]
\bauthor{\bsnm{Talagrand},~\bfnm{Michel}\binits{M.}}
(\byear{1996}).
\btitle{A new look at independence}.
\bjournal{Ann. Probab.}
\bvolume{24}
\bpages{1--34}.
\bid{doi={10.1214/aop/1042644705}, issn={0091-1798}, mr={1387624}}
\end{barticle}
%
\bptok{imsref}%
\endbibitem

\bibitem{vu07}
\begin{barticle}[mr]
\bauthor{\bsnm{Vu},~\bfnm{Van~H.}\binits{V.~H.}}
(\byear{2007}).
\btitle{Spectral norm of random matrices}.
\bjournal{Combinatorica}
\bvolume{27}
\bpages{721--736}.
\bid{doi={10.1007/s00493-007-2190-z}, issn={0209-9683}, mr={2384414}}
\end{barticle}
%
\bptok{imsref}%
\endbibitem

\bibitem{wolfe1}
\begin{bunpublished}[auto:parserefs-M02]
\bauthor{\bsnm{Wolfe},~\bfnm{P.~J.}\binits{P.~J.}} \AND
\bauthor{\bsnm{Olhede},~\bfnm{S.~C.}\binits{S.~C.}}
(\byear{2013}).
\btitle{Nonparametric graphon estimation}.
\bnote{Preprint. Available at \arxivurl{arXiv:1309.5936}.}
\end{bunpublished}
%
\bptok{imsref}%
\endbibitem

\bibitem{yang}
\begin{binproceedings}[auto:parserefs-M02]
\bauthor{\bsnm{Yang},~\bfnm{J.~J.}\binits{J.~J.}},
\bauthor{\bsnm{Han},~\bfnm{Q.}\binits{Q.}} \AND
\bauthor{\bsnm{Airoldi},~\bfnm{E.~M.}\binits{E.~M.}}
(\byear{2014}).
\btitle{Nonparametric estimation and testing of exchangeable graph models}.
In \bbooktitle{Proceedings of the Seventeenth International Conference on Artificial Intelligence and
Statistics.
Journal of Machine Learning Research, Conference and Workshop Proceedings, Vol.
33}
\bpages{1060--1067}.
\end{binproceedings}
%
\bptok{imsref}%
\endbibitem

\bibitem{zermelo29}
\begin{barticle}[mr]
\bauthor{\bsnm{Zermelo},~\bfnm{E.}\binits{E.}}
(\byear{1929}).
\btitle{Die {B}erechnung der {T}urnier-{E}rgebnisse als ein {M}aximumproblem der {W}ahrscheinlichkeitsrechnung}.
\bjournal{Math. Z.}
\bvolume{29}
\bpages{436--460}.
\bid{doi={10.1007/BF01180541}, issn={0025-5874}, mr={1545015}}
\end{barticle}
%
\bptok{imsref}%
\endbibitem

\end{thebibliography}
\end{document}